\documentclass[sn-mathphys-num]{sn-jnl}
\usepackage{amsmath,amssymb,amsfonts}
\usepackage{mathtools}
\usepackage{bm}
\usepackage{enumitem}
\usepackage{booktabs}

\newcommand{\cE}{\mathcal{E}}
\newcommand{\cN}{\mathcal{N}}

\newcommand{\Si}{\Sigma}

\newcommand{\CC}{\mathbb{C}}
\newcommand{\NN}{\mathbb{N}}

\newcommand{\la}{\lambda}
\newcommand{\lcp}{\operatorname{lcp}}
\newcommand{\lcs}{\operatorname{lcs}}

\theoremstyle{definition}
\newtheorem{definition}{Definition}[section]
\newtheorem{theorem}{Theorem}[section]
\newtheorem{lemma}{Lemma}[section]
\newtheorem{proposition}{Proposition}[section]
\newtheorem{corollary}[theorem]{Corollary}

\newtheorem{example}{Example}
\newtheorem{remark}{Remark}

\begin{document}
	
	\title[A generalized monoid of infinite words]{
	A generalized monoid of infinite words: asymptotic prefix–suffix quotients and algebraic structure
	}
	
	\author[1]{\fnm{A.} \sur{\'Alvarez Cruz}}\email{amaury@ic.ufrj.br}
	\author[2]{\fnm{E. A.} \sur{\'Alvarez Guti\'errez}}\email{esteban.gutierrez.1@cp2.edu.br}
	
	\affil[1]{\orgdiv{Instituto de Computa\c{c}\~ao}, \orgname{Universidade Federal do Rio de Janeiro (UFRJ)}, \orgaddress{\city{Rio de Janeiro}, \country{Brazil}}}
	\affil[2]{\orgdiv{Col\'egio Pedro II, Campus Centro}, \orgaddress{\city{Rio de Janeiro}, \country{Brazil}}}
	
	\abstract{
	A generalized monoid of words \(\widetilde{\Si}^*\) is constructed as
	the quotient of moderate nets of finite words by an asymptotic
	equivalence relation based on a bidirectional prefix–suffix metric.
	The main algebraic result is that this quotient is a monoid containing
	\(\Si^*\) faithfully, with a reversal involution, a natural divisibility
	preorder, failure of cancellation, and nontrivial idempotents. The
	construction is designed so that any functional depending only on a
	logarithmic prefix descends to the quotient, yielding a well-defined
	action of logarithmic-prefix functionals. Finite scalar values arise
	only after applying an additional renormalization functional, which
	depends on the chosen mould and window.
	With respect to the fixed truncation injection, the monoid strictly enlarges the
	classical set \(\Si^\infty\): an explicit oscillating net is exhibited
	that has no limit in the Cantor space but defines a genuine element of
	\(\widetilde{\Si}^*\) not coming from a finite or right-infinite word
	under that injection. The equivalence relation is designed so that any
	functional depending only on a logarithmic prefix, and sending moderate
	nets of words to moderate nets of complex numbers, descends to the
	quotient, providing a well-defined action of logarithmic-prefix functionals.
	Finite scalar values arise only after applying an additional
	renormalization functional, which depends on the chosen mould and
	window.
	}
	
\keywords{Generalized monoid, infinite words, prefix metric, asymptotic equivalence, idempotents, divisibility preorder}
	
	\maketitle
	
	\section{Introduction}
	\label{sec:intro}
	
	The free monoid \(\Si^*\) over a finite alphabet \(\Si\) consists of all finite words with concatenation as product.
	It is a fundamental object in combinatorics, formal language theory, and algebraic automata theory~\cite{Lothaire1983, Lothaire2002, PerrinPin2004}.
	When one seeks to model sequences of unbounded length, the classical extension \(\Si^\infty = \Si^* \cup \Si^\omega\) (where \(\Si^\omega\) is the set of right-infinite words) becomes the natural universe.
	However, as Perrin and Pin explicitly note in their treatise \emph{Infinite Words}~\cite{PerrinPin2004}, \emph{``the product of two infinite words is not defined, so that \(\Si^\infty\) is not a monoid''}.
	This lack of algebraic structure hinders applications in which infinite sequences need to be composed, reversed, or manipulated with the same flexibility as finite ones.
	
Several classical models of infinite words and completions of \(\Si^*\)
exist. The Cantor space \(\Si^\omega\) consists of right-infinite words
and is not a monoid under concatenation of two infinite words
\cite{PerrinPin2004}; prefix observables can be evaluated pointwise, but
there is no reversal involution on infinite words. The union
\(\Si^\infty=\Si^*\cup\Si^\omega\) inherits the same obstruction.
The profinite completion of \(\Si^*\) is a compact monoid
\cite{Almeida1994,Almeida2005}, but its elements are not finite or
infinite words, and the combinatorial bidirectional prefix--suffix
control used here has no direct analogue there. The
Stone--\v{C}ech compactification carries a right-topological semigroup
structure \cite{HindmanStrauss2012}, but it is likewise not a space of
words and does not support the logarithmic prefix functionals in the
form needed below.The construction proposed here is a quotient of moderate nets of finite
words; it does not claim to be a canonical extension of the partial
concatenation on \(\Si^\infty\). It provides a monoid in which
concatenation is everywhere defined and which contains \(\Si^*\)
faithfully, as well as set-theoretic images of infinite words under a
chosen truncation.
	
Unlike the profinite completion, \(\widetilde{\Si}^*\) is not claimed to be compact or to be a completion in the usual metric sense; it is a quotient of moderate nets by an asymptotic equivalence.
This quotient is designed to make concatenation everywhere defined while retaining a logarithmic window for prefix-based observables.

The central idea is to work with \emph{nets} \((w_\varepsilon)_{\varepsilon\in(0,1]}\) of finite words, indexed by a parameter \(\varepsilon\to0^+\), and to identify nets whose common prefix and common suffix grow faster than any constant multiple of \(\log_2(1/\varepsilon)\).
The polynomial growth condition on the length of the nets (moderation) ensures that the resulting object is large enough to accommodate the infinite words that appear in practice.
The equivalence relation is designed so that any functional that inspects only the first \(N_0(\varepsilon)=\lfloor\log_2(1/\varepsilon)\rfloor\) symbols of its argument becomes well defined on the quotient.
By controlling both ends of the word through a bidirectional metric
$
d_{\mathrm{PS}}(u,v)=\max\{2^{-\lcp(u,v)},2^{-\lcs(u,v)}\},
$																																			
we obtain a monoid \(\widetilde{\Si}^*\) that possesses a reversal involution, a natural preorder, and a well-defined action of moulds (in the sense of \'Ecalle \cite{Ecalle1981a,Ecalle1981b,Ecalle1985}) depending only on logarithmic prefixes.

Throughout the paper, the classical spaces \(\Si^*\) and \(\Si^\omega\)
are related to \(\widetilde{\Si}^*\) only through a fixed truncation
injection (Definition~\ref{def:iota_infty}). This map is set-theoretic,
depends on the chosen truncation scale, and does not preserve the
partial concatenation structure of \(\Si^\infty\).
	
	To demonstrate that \(\widetilde{\Si}^*\) genuinely goes beyond the classical frameworks, we exhibit a net \((w_\varepsilon)\) that alternates persistently between two different periodic patterns depending on the parity of \(\lfloor\log_2(1/\varepsilon)\rfloor\).
	This net does \emph{not} converge in the Cantor space \(\Si^\omega\); its product with another infinite word is undefined in \(\Si^\infty\); and a simple prefix-dependent functional such as the first-letter
	observable produces a divergent sequence, while other functionals may
	converge or diverge depending on the chosen parameters.
	Yet the class of this net is a perfectly well-defined element of \(\widetilde{\Si}^*\), and the evaluation of a multiplicative mould descends to the quotient.
	This shows that the generalized monoid strictly enlarges the classical universe.
	
The principal semigroup-theoretic content of the paper can be
summarized as follows. First, the quotient
\(\widetilde{\Si}^*=\mathcal E_M(\Si^*)/\sim\) is a monoid in which the
free monoid \(\Si^*\) embeds faithfully, but which also contains
non-cancellative elements and nontrivial idempotents. Second, the
bidirectional logarithmic equivalence ensures that the class of a
moderate net is determined by its asymptotic prefix and suffix
behaviour on scales larger than \(\log_2(1/\varepsilon)\), and this is
exactly what makes logarithmic-prefix functionals well defined on the
quotient. The oscillating net of Section~\ref{sec:strict} is a
witness that the resulting object is strictly larger than the
classical set \(\Si^\infty\) under the chosen truncation injection.

The remainder of the paper is structured as follows.
Section~\ref{sec:prelim} introduces the bidirectional metric, moderate nets, negligible nets, and the asymptotic equivalence relation \(\sim\).
Section~\ref{sec:monoid} defines the quotient monoid \(\widetilde{\Si}^*\) and proves its basic properties.
Section~\ref{sec:alg_prop} develops the algebraic structure: the natural preorder, the reversal involution, cancellativity failure, idempotents, and partial observations on Green's relations.
Section~\ref{sec:strict} presents the oscillating net and proves that \(\widetilde{\Si}^*\) strictly contains the classical spaces.
Section~\ref{sec:scales} discusses further perspectives: how the logarithmic window can be replaced by other admissible asymptotic scales, leading to a hierarchy of quotient monoids.
Section~\ref{sec:conclusion} concludes with open problems and future directions.
	
\section{Preliminaries: the bidirectional metric and the generalized monoid}
\label{sec:prelim}

Throughout this paper, \(\Si\) denotes a finite non-empty alphabet.
The free monoid \(\Si^*\) consists of all finite words over \(\Si\), including the empty word \(\la\).
The product is concatenation, and \(|\cdot|\) denotes the length of a word.

\medskip
\noindent\textbf{Logarithmic convention.}
Throughout the paper, \(\log_2\) denotes the logarithm in base \(2\),
used in the logarithmic window
\(N_0(\varepsilon)=\lfloor\log_2(1/\varepsilon)\rfloor\). The symbol
\(\log\) without subscript denotes the natural logarithm, used in
asymptotic expansions and analytic constants.

\subsection{The bidirectional metric}

For two words \(u,v\in\Si^*\), let \(\lcp(u,v)\) be the length of their longest common prefix and \(\lcs(u,v)\) the length of their longest common suffix.
If \(u=v\) we set \(\lcp(u,v)=\lcs(u,v)=+\infty\).

The \emph{bidirectional metric} on \(\Si^*\) is defined by
\begin{equation}\label{eq:dPS}
	d_{\mathrm{PS}}(u,v) = \max\{2^{-\lcp(u,v)},\,2^{-\lcs(u,v)}\},\qquad 2^{-\infty}=0.
\end{equation}
The condition \(d_{\mathrm{PS}}(u,v)\le 2^{-k}\) is equivalent to
\[
\min\{\lcp(u,v),\lcs(u,v)\}\ge k.
\]

\begin{proposition}[Ultrametric property]
	The bidirectional metric satisfies the strong ultrametric inequality
	\[
	d_{\mathrm{PS}}(u,w)\le \max\{d_{\mathrm{PS}}(u,v),\,d_{\mathrm{PS}}(v,w)\}\qquad\forall u,v,w\in\Si^*.
	\]
\end{proposition}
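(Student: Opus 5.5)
The plan is to reduce the inequality to two separate ultrametric-type inequalities for the length functions, one for \(\lcp\) and one for \(\lcs\). Concretely, I will first establish that for all \(u,v,w\in\Si^*\)
\[
\lcp(u,w)\ge \min\{\lcp(u,v),\lcp(v,w)\},\qquad \lcs(u,w)\ge \min\{\lcs(u,v),\lcs(v,w)\},
\]
with the convention that \(+\infty\) exceeds every integer. Then I will combine the two using the equivalence already noted after \eqref{eq:dPS}, namely that \(d_{\mathrm{PS}}(u,v)\le 2^{-k}\) iff \(\min\{\lcp(u,v),\lcs(u,v)\}\ge k\).

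For the prefix inequality, set \(k=\min\{\lcp(u,v),\lcp(v,w)\}\).

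1. If \(k=+\infty\), then \(u=v=w\) and there is nothing to prove.
2. If \(u=v\) but \(v\neq w\), then \(k=\lcp(v,w)\) and the claim reads \(\lcp(u,w)=\lcp(v,w)\), which holds trivially. The case \(v=w\) is symmetric.
3. In the remaining case \(k\) is finite. Then \(u\) and \(v\) share their first \(k\) letters, and so do \(v\) and \(w\). In particular all three words have length at least \(k\), and their prefixes of length \(k\) coincide. Hence \(u\) and \(w\) have a common prefix of length \(k\), so \(\lcp(u,w)\ge k\) (or \(u=w\), where the value is \(+\infty\)).

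The suffix inequality follows by the same argument applied to reversed words, since \(\lcs(u,v)=\lcp(u^R,v^R)\) and reversal is a bijection that preserves equality of words.

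To conclude, the case \(u=w\) is trivial, since the left side is then \(0\). Otherwise, put \(m=\min\{\lcp(u,v),\lcs(u,v),\lcp(v,w),\lcs(v,w)\}\). By the definition of \(d_{\mathrm{PS}}\) as a maximum, and since \(t\mapsto 2^{-t}\) is decreasing,
\[
\max\{d_{\mathrm{PS}}(u,v),d_{\mathrm{PS}}(v,w)\}=2^{-m}.
\]
The two inequalities above give \(\lcp(u,w)\ge m\) and \(\lcs(u,w)\ge m\), so \(d_{\mathrm{PS}}(u,w)\le 2^{-m}\). If \(m=+\infty\), then \(u=v=w\) and both sides vanish.

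No step is a genuine obstacle. The only points needing care are the bookkeeping of the \(+\infty\) convention when some of the words coincide, and the fact that a common prefix of length \(k\) forces all words involved to have length at least \(k\). I would handle both explicitly in the case split above, rather than rely on the convention \(2^{-\infty}=0\) silently.
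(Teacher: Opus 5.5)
Your proof is correct and takes the same route as the paper. The paper notes that \(d_P(u,v)=2^{-\lcp(u,v)}\) and \(d_S(u,v)=2^{-\lcs(u,v)}\) are ultrametrics and that a maximum of two ultrametrics is an ultrametric; you prove the inequality \(\lcp(u,w)\ge\min\{\lcp(u,v),\lcp(v,w)\}\), obtain its suffix analogue by reversal, and carry out the maximum-combination explicitly, including the \(+\infty\) bookkeeping that the paper leaves implicit.
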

\begin{proof}
	Both the prefix metric \(d_P(u,v)=2^{-\lcp(u,v)}\) and the suffix metric \(d_S(u,v)=2^{-\lcs(u,v)}\) are ultrametrics, and the maximum of two ultrametrics is again an ultrametric.
\end{proof}

The interaction of \(d_{\mathrm{PS}}\) with concatenation is controlled by a subadditivity bound that is fundamental for the construction of the quotient.

\begin{proposition}[Subadditivity under concatenation]\label{prop:subadditivity}
	For all $u_1,u_2,v_1,v_2\in\Si^*$,
	\[
	d_{\mathrm{PS}}(u_1u_2,\, v_1v_2) \le \max\{d_{\mathrm{PS}}(u_1,v_1),\, d_{\mathrm{PS}}(u_2,v_2)\}.
	\]
\end{proposition}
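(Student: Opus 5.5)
The plan is to reformulate the inequality through the equivalence noted after \eqref{eq:dPS}: \(d_{\mathrm{PS}}(u,v)\le 2^{-k}\) if and only if \(\min\{\lcp(u,v),\lcs(u,v)\}\ge k\). Set
\[
k=\min\{\lcp(u_1,v_1),\lcs(u_1,v_1),\lcp(u_2,v_2),\lcs(u_2,v_2)\}\in\NN\cup\{+\infty\},
\]
so that the right-hand side of the claimed inequality equals \(2^{-k}\). It then suffices to show that \(\lcp(u_1u_2,v_1v_2)\ge k\) and \(\lcs(u_1u_2,v_1v_2)\ge k\).

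First I dispose of the degenerate case \(k=+\infty\). Then \(u_1=v_1\) and \(u_2=v_2\), so \(u_1u_2=v_1v_2\) and both sides vanish.

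Now assume \(k\) is finite. For the prefix bound I use only the first factors. Suppose \(u_1\neq v_1\), and put \(p=\lcp(u_1,v_1)\ge k\). Then \(u_1\) and \(v_1\) share their first \(p\) letters, and these are also the first \(p\) letters of \(u_1u_2\) and \(v_1v_2\). Hence \(\lcp(u_1u_2,v_1v_2)\ge p\ge k\). This case needs a little care: if one of \(u_1,v_1\) is a proper prefix of the other, then \(p\) equals the shorter length and the inequality \(\ge p\) still holds. If instead \(u_1=v_1\), then the common prefix is \(u_1\) followed by the common prefix of \(u_2,v_2\). This gives \(\lcp(u_1u_2,v_1v_2)\ge |u_1|+\lcp(u_2,v_2)\ge k\), where the last term is finite unless \(u_2=v_2\), and in that subcase the words coincide.

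The suffix bound is the mirror argument using the second factors. Either \(\lcs(u_1u_2,v_1v_2)\ge\lcs(u_2,v_2)\ge k\), or \(u_2=v_2\) and \(\lcs(u_1u_2,v_1v_2)\ge |u_2|+\lcs(u_1,v_1)\ge k\). Combining the two bounds with the equivalence above yields \(d_{\mathrm{PS}}(u_1u_2,v_1v_2)\le 2^{-k}\), which is the claim.

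There is no real obstacle here. The only point needing attention is the bookkeeping of the \(+\infty\) convention when a factor pair is equal, and the observation that a common prefix of the first factors stays a common prefix of the products even when one factor is a prefix of the other. Note also that the prefix bound uses only the first factors and the suffix bound only the second. This is why the maximum over the two pairs suffices even though it may be far from sharp.
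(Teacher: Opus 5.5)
Your proof is correct and takes the same route as the paper. The paper makes the same case split, on whether $u_1=v_1$ for the prefix bound and on whether $u_2=v_2$ for the suffix bound. It differs only cosmetically: it proves $\lcp(u_1u_2,v_1v_2)\ge\min\{\lcp(u_1,v_1),\lcp(u_2,v_2)\}$ for each metric separately and obtains the suffix case by reversal, rather than bundling all four quantities into one threshold $k$.

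One small correction to your closing remark: the prefix bound does not use only the first factors, since the subcase $u_1=v_1$ relies on $\lcp(u_2,v_2)$. This does not affect the validity of the proof.
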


\begin{proof}
	It suffices to prove the corresponding inequality for the prefix metric
	and for the suffix metric separately, because
	\(d_{\mathrm{PS}}=\max\{d_P,d_S\}\).
	
	For the prefix metric, let \(\ell_i=\operatorname{lcp}(u_i,v_i)\),
	with the convention \(\operatorname{lcp}(w,w)=+\infty\).
	If \(\ell_1=\infty\), then \(u_1=v_1\), and
	\(\operatorname{lcp}(u_1u_2,u_1v_2)=|u_1|+\ell_2\ge\ell_2\).
	If \(\ell_1<\infty\), then the first \(\ell_1\) letters of
	\(u_1u_2\) and \(v_1v_2\) coincide, so
	\(\operatorname{lcp}(u_1u_2,v_1v_2)\ge\ell_1\).
	In both cases,
	\[
	\operatorname{lcp}(u_1u_2,v_1v_2)
	\ge
	\min\{\ell_1,\ell_2\}.
	\]
	
Applying the prefix inequality to the reversed words, written in the correct order
\((u_1u_2)^\vee=u_2^\vee u_1^\vee\) and
\((v_1v_2)^\vee=v_2^\vee v_1^\vee\), i.e.\ to the pairs
\((u_2^\vee,u_1^\vee)\) and \((v_2^\vee,v_1^\vee)\), and using
\(\operatorname{lcs}(u,v)=\operatorname{lcp}(u^\vee,v^\vee)\), gives
\[
\operatorname{lcs}(u_1u_2,v_1v_2)
\ge
\min\{\operatorname{lcs}(u_1,v_1),\operatorname{lcs}(u_2,v_2)\}.
\]
	Consequently,
	\[
	d_P(u_1u_2,v_1v_2)
	\le
	\max\{d_P(u_1,v_1),d_P(u_2,v_2)\}
	\le
	\max\{d_{\mathrm{PS}}(u_1,v_1),d_{\mathrm{PS}}(u_2,v_2)\},
	\]
	and similarly for \(d_S\). Taking the maximum proves the proposition.
\end{proof}

\subsection{Nets of words: moderation and negligibility}

A \emph{net of words} is a family \((w_\varepsilon)_{\varepsilon\in(0,1]}\) indexed by a parameter \(\varepsilon\to0^+\).
Only the behaviour for arbitrarily small \(\varepsilon\) matters.

\begin{definition}[Moderate nets]\label{def:moderate}
	A net \((w_\varepsilon)\) is \textbf{moderate} if there exists an integer \(N\in\NN\), a constant \(C>0\), and \(\varepsilon_0>0\) such that
	\[
	|w_\varepsilon| \le C\,\varepsilon^{-N}\qquad\text{for all }\varepsilon<\varepsilon_0.
	\]
	The set of all moderate nets is denoted by \(\cE_M(\Si^*)\).
\end{definition}
Under coordinatewise concatenation \((u_\varepsilon)\cdot(v_\varepsilon)=(u_\varepsilon v_\varepsilon)\), the set \(\cE_M(\Si^*)\) forms a monoid with identity \((\la)_\varepsilon\), the constant empty net.

Indeed, if \(|u_\varepsilon|\le C_1\varepsilon^{-N_1}\) and
\(|v_\varepsilon|\le C_2\varepsilon^{-N_2}\) for all sufficiently small
\(\varepsilon\), then
\[
|u_\varepsilon v_\varepsilon|
\le C_1\varepsilon^{-N_1}+C_2\varepsilon^{-N_2}
\le (C_1+C_2)\varepsilon^{-\max(N_1,N_2)},
\]
so the concatenated net is moderate.

A net is called \textit{negligible} if it is asymptotically indistinguishable from the empty word.
With the bidirectional metric this forces the net to be eventually empty.
\begin{definition}[Negligible nets]\label{def:negligible}
	A moderate net \((w_\varepsilon)\) is \textbf{negligible} if for every \(m\in\NN\) there exist \(C_m>0\) and \(\varepsilon_m>0\) such that
	\[
	d_{\mathrm{PS}}(w_\varepsilon,\la) \le C_m\,\varepsilon^{\,m}\qquad\text{for all }\varepsilon<\varepsilon_m.
	\]
	The set of negligible nets is denoted by \(\cN(\Si^*)\).
\end{definition}
\begin{proposition}[Characterisation of negligible nets]\label{prop:negligible_eventually_empty}
	A net \((w_\varepsilon)\in\cE_M(\Si^*)\) is negligible if and only if there exists \(\varepsilon_0>0\) such that \(w_\varepsilon=\la\) for all \(\varepsilon<\varepsilon_0\).
\end{proposition}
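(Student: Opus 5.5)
Both directions reduce to one observation about the quantity \(d_{\mathrm{PS}}(w,\la)\) for a single finite word \(w\); negligibility then forces this quantity to vanish for small \(\varepsilon\). We first compute \(d_{\mathrm{PS}}(w,\la)\) explicitly. If \(w=\la\), then by the convention \(\lcp(\la,\la)=\lcs(\la,\la)=+\infty\) we get \(d_{\mathrm{PS}}(\la,\la)=0\). If \(w\neq\la\), the longest common prefix and suffix of \(w\) with the empty word both have length \(0\), so
\[
d_{\mathrm{PS}}(w,\la)=\max\{2^{0},2^{0}\}=1 .
\]
Thus \(d_{\mathrm{PS}}(\cdot,\la)\) takes only the values \(0\) and \(1\), and it equals \(0\) exactly on the empty word.

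For the \emph{sufficiency} direction, suppose \(w_\varepsilon=\la\) for all \(\varepsilon<\varepsilon_0\). Then \(d_{\mathrm{PS}}(w_\varepsilon,\la)=0\) for every such \(\varepsilon\). Hence for every \(m\) the bound in Definition~\ref{def:negligible} holds with, say, \(C_m=1\) and \(\varepsilon_m=\varepsilon_0\). The net is moderate trivially, since its length is eventually \(0\).

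For the \emph{necessity} direction, take \(m=1\) in Definition~\ref{def:negligible}. This gives \(C_1>0\) and \(\varepsilon_1>0\) with \(d_{\mathrm{PS}}(w_\varepsilon,\la)\le C_1\varepsilon\) for all \(\varepsilon<\varepsilon_1\). Put \(\varepsilon_0=\min\{\varepsilon_1,1/(2C_1)\}\). For \(\varepsilon<\varepsilon_0\) we then have \(C_1\varepsilon<1/2<1\), so \(d_{\mathrm{PS}}(w_\varepsilon,\la)<1\). By the dichotomy above this forces \(d_{\mathrm{PS}}(w_\varepsilon,\la)=0\), that is, \(w_\varepsilon=\la\).

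There is no real obstacle in this argument. The only point needing care is the edge convention \(\lcp(\la,\la)=+\infty\) together with \(2^{-\infty}=0\). Equally important is the fact that a nonempty word is at distance exactly \(1\) from \(\la\), with no intermediate values. This discreteness is what reduces the asymptotic condition to an exact, eventual one.
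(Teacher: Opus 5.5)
Your proposal is correct and follows essentially the same route as the paper. Both arguments rest on the dichotomy \(d_{\mathrm{PS}}(w,\la)\in\{0,1\}\) and apply the \(m=1\) case of negligibility with a threshold \(\varepsilon_0=\min\{\varepsilon_1,\cdot\}\); the paper uses \(1/C_1\) where you use \(1/(2C_1)\), which makes no difference.
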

\begin{proof}
	\textbf{($\Leftarrow$)} If the net is eventually empty, then for sufficiently small
	$\varepsilon$ we have $w_\varepsilon=\la$ and $d_{\mathrm{PS}}(w_\varepsilon,\la)=0$.
	Hence the negligibility condition is trivially satisfied (take $C_m=1$ and
	$\varepsilon_m$ the threshold from which the net is empty).
	
	\textbf{($\Rightarrow$)} Suppose $(w_\varepsilon)$ is negligible.
	We first analyse the possible values of $d_{\mathrm{PS}}(w,\la)$ for an arbitrary
	word $w\in\Si^*$.  By definition,
	\[
	d_{\mathrm{PS}}(w,\la) = \max\{2^{-\lcp(w,\la)},\; 2^{-\lcs(w,\la)}\}.
	\]
	
	\emph{Case 1: $w \neq \la$.}
	Since the empty word $\la$ has no letters, the longest prefix that $w$ and $\la$
	can share is the empty prefix, which has length $0$.  Hence $\lcp(w,\la)=0$.
	The same reasoning applied to the reversed words shows that $\lcs(w,\la)=0$.
	Consequently,
	\[
	2^{-\lcp(w,\la)} = 2^{-0} = 1,\qquad
	2^{-\lcs(w,\la)} = 2^{-0} = 1,
	\]
	and therefore $d_{\mathrm{PS}}(w,\la) = \max\{1,1\} = 1$.
	
	\emph{Case 2: $w = \la$.}
	Here $\lcp(\la,\la) = \lcs(\la,\la) = +\infty$ by convention, so
	\[
	2^{-\lcp(\la,\la)} = 2^{-\infty} = 0,\qquad
	2^{-\lcs(\la,\la)} = 2^{-\infty} = 0,
	\]
	and $d_{\mathrm{PS}}(\la,\la) = \max\{0,0\} = 0$.
	
	Thus, for \emph{every} word $w$, the distance $d_{\mathrm{PS}}(w,\la)$ belongs to
	the two-element set $\{0,1\}$; no intermediate values such as $1/2,1/4,\dots$ can
	occur because that would require $\lcp(w,\la)$ or $\lcs(w,\la)$ to be a positive
	integer, which is impossible when one of the words is empty.
	
	Now, taking $m=1$ in Definition~\ref{def:negligible}, there exist $C_1>0$ and
	$\varepsilon_1>0$ such that
	\[
	d_{\mathrm{PS}}(w_\varepsilon,\la) \le C_1\,\varepsilon \qquad(\varepsilon<\varepsilon_1).
	\]
	Set $\varepsilon_0 = \min(\varepsilon_1,\, 1/C_1)$.  For every $\varepsilon<\varepsilon_0$,
	\[
	d_{\mathrm{PS}}(w_\varepsilon,\la) \le C_1\varepsilon < C_1\cdot\frac{1}{C_1}=1.
	\]
	Since we have just shown that $d_{\mathrm{PS}}(w_\varepsilon,\la)$ can only be $0$
	or $1$, the strict inequality $<1$ forces it to be $0$.  And
	$d_{\mathrm{PS}}(w_\varepsilon,\la)=0$ occurs precisely when $w_\varepsilon=\la$.
	
	Hence $w_\varepsilon=\la$ for all $\varepsilon<\varepsilon_0$, i.e.\ the net is
	eventually empty.
\end{proof}

This is a special feature of the bidirectional metric relative to the empty word:
for any nonempty word \(w\), \(d_{\mathrm{PS}}(w,\lambda)=1\). It is stronger than the usual Colombeau
notion of negligible nets \cite{GrosserEtAl2001}.

Thus \(\cN(\Si^*)\) is exactly the set of nets that are eventually empty; it is a submonoid of \(\cE_M(\Si^*)\).

\subsection{Asymptotic equivalence}

We now introduce the notion of asymptotic equivalence between nets of words. This relation captures the idea that two nets share the same asymptotic behaviour at their extremities, while their internal structure may differ considerably.

\begin{definition}[Asymptotic equivalence]\label{def:equiv}
	Recall the convention \(\lcp(u,u)=\lcs(u,u)=+\infty\). Two moderate nets $(u_\varepsilon)$ and $(v_\varepsilon)$ are \textbf{asymptotically equivalent}, denoted $(u_\varepsilon)\sim(v_\varepsilon)$, if for every $m\in\NN$ there exists $\varepsilon_0>0$ such that
	\[
	\min\bigl\{\lcp(u_\varepsilon,v_\varepsilon),\; \lcs(u_\varepsilon,v_\varepsilon)\bigr\}
	\;\ge\; m\,\log_2(1/\varepsilon)
	\qquad\text{for all } \varepsilon<\varepsilon_0 .
	\]
	Equivalently, $d_{\mathrm{PS}}(u_\varepsilon,v_\varepsilon)=O(\varepsilon^m)$ for every $m$.
\end{definition}

This condition is rather strong: the common prefix and the common suffix of the two words must both grow faster than any logarithmic function of $1/\varepsilon$. It does not require the words to be equal, nor even to have the same length; only that, asymptotically, the two ends of the words become increasingly indistinguishable.

\begin{remark}
	By definition,
	\[
	d_{\mathrm{PS}}(u,v)
	=
	\max\{2^{-\lcp(u,v)},\,2^{-\lcs(u,v)}\}
	=
	2^{-\min\{\lcp(u,v),\,\lcs(u,v)\}}.
	\]
	Therefore the inequality
	\(d_{\mathrm{PS}}(u_\varepsilon,v_\varepsilon)\le \varepsilon^m\)
	is equivalent to
	\[
	\min\{\lcp(u_\varepsilon,v_\varepsilon),\,\lcs(u_\varepsilon,v_\varepsilon)\}
	\ge m\log_2(1/\varepsilon).
	\]
	Requiring this for every \(m\in\NN\) is equivalent to requiring
	\(d_{\mathrm{PS}}(u_\varepsilon,v_\varepsilon)=O(\varepsilon^m)\)
	for every \(m\); the multiplicative constant in the \(O\)-notation is
	absorbed by increasing \(m\) and taking \(\varepsilon\) sufficiently small.
	Thus the two formulations in Definition~\ref{def:equiv} coincide.
\end{remark}

\begin{remark}
	The following examples illustrate the scope of this definition.
	For brevity, write
	\(L(\varepsilon)=\lfloor 1/\varepsilon\rfloor\).
	\begin{enumerate}
		\item \textit{Identical nets.}
		Let \(u_\varepsilon=a^{L(\varepsilon)}\) and
		\(v_\varepsilon=a^{L(\varepsilon)}\). Then
		\[
		\lcp(u_\varepsilon,v_\varepsilon)
		=
		\lcs(u_\varepsilon,v_\varepsilon)
		=
		L(\varepsilon).
		\]
		For every fixed \(m\in\mathbb N\),
		\[
		\frac{L(\varepsilon)}{\log_2(1/\varepsilon)}
		\longrightarrow\infty
		\qquad(\varepsilon\to0^+),
		\]
		so there exists \(\varepsilon_0>0\) such that
		\[
		L(\varepsilon)
		\ge
		m\log_2(1/\varepsilon)
		\]
		for all \(\varepsilon<\varepsilon_0\).
		Hence the nets are equivalent; indeed, they are identical as nets.
		
		\item \textit{Differing only in the middle.}
		Let
		\[
		u_\varepsilon
		=
		a^{L(\varepsilon)}\, b\, a^{L(\varepsilon)},
		\qquad
		v_\varepsilon
		=
		a^{L(\varepsilon)}\, c\, a^{L(\varepsilon)}.
		\]
		Then
		\[
		\lcp(u_\varepsilon,v_\varepsilon)
		=
		\lcs(u_\varepsilon,v_\varepsilon)
		=
		L(\varepsilon).
		\]
		As in the previous item, for every fixed \(m\in\mathbb N\)
		there exists \(\varepsilon_0>0\) such that
		\[
		L(\varepsilon)
		\ge
		m\log_2(1/\varepsilon)
		\]
		for all \(\varepsilon<\varepsilon_0\).
		Hence the nets are equivalent even though they differ by one
		central letter.
		
		\item \textit{Differing at the end.}
		Let
		\[
		u_\varepsilon
		=
		a^{L(\varepsilon)},
		\qquad
		v_\varepsilon
		=
		a^{L(\varepsilon)}\, b.
		\]
		Then
		\[
		\lcp(u_\varepsilon,v_\varepsilon)
		=
		L(\varepsilon),
		\]
		but
		\[
		\lcs(u_\varepsilon,v_\varepsilon)
		=
		0
		\]
		for every \(\varepsilon\), because one word ends in \(a\) and the
		other in \(b\). Therefore
		\[
		\min\{\lcp(u_\varepsilon,v_\varepsilon),
		\lcs(u_\varepsilon,v_\varepsilon)\}
		=
		0
		\]
		for every \(\varepsilon\). In particular, for \(m=1\) there is no
		\(\varepsilon_0>0\) such that the equivalence condition holds for
		all \(\varepsilon<\varepsilon_0\). Hence the nets are not
		equivalent.
		
		\item \textit{Oscillating net.}
		Let \(N(\varepsilon)=\lfloor\varepsilon^{-1}\rfloor\) and define
		\[
		w_\varepsilon
		=
		\begin{cases}
			(ab)^{N(\varepsilon)}, & \text{if }
			\lfloor\log_2(1/\varepsilon)\rfloor
			\text{ is even},\\[2mm]
			(ba)^{N(\varepsilon)}, & \text{if }
			\lfloor\log_2(1/\varepsilon)\rfloor
			\text{ is odd}.
		\end{cases}
		\]
		The parity of \(\lfloor\log_2(1/\varepsilon)\rfloor\) changes
		infinitely often as \(\varepsilon\to0^+\), so the first letter of
		\(w_\varepsilon\) alternates between \(\texttt a\) and
		\(\texttt b\) on arbitrarily small scales.
		
		If \(u\) is any finite word, or any infinite word with a fixed
		first letter, then on infinitely many arbitrarily small values of
		\(\varepsilon\) the first letter of \(w_\varepsilon\) differs from
		the first letter of \(u\), so
		\[
		\lcp(w_\varepsilon,u)=0
		\]
		on those values. Therefore the equivalence condition, which must
		hold eventually for every fixed \(m\), fails already for \(m=1\).
		Consequently, \((w_\varepsilon)\) cannot be equivalent to a
		constant net of a finite word, nor to a net that converges in the
		prefix metric to an infinite word. This net will be studied in
		detail in Section~\ref{sec:strict}, where it serves as the
		principal witness that \(\widetilde{\Si}^*\)
		strictly extends the classical spaces \(\Si^*\) and
		\(\Si^\omega\).
		
		\item \textit{Polynomial growth.}
		The nets
		\(a^{L(\varepsilon)}\) and \(a^{2L(\varepsilon)}\) are
		equivalent. Indeed, their common prefix and common suffix lengths
		are both \(L(\varepsilon)\), and, as before, for every fixed
		\(m\in\mathbb N\) one has
		\[
		L(\varepsilon)
		\ge
		m\log_2(1/\varepsilon)
		\]
		for all sufficiently small \(\varepsilon\).
		
		In contrast, let
		\[
		P(\varepsilon)=\lfloor\log_2(1/\varepsilon)\rfloor.
		\]
		The nets
		\(a^{P(\varepsilon)}\) and \(a^{2P(\varepsilon)}\) are not
		equivalent: their common prefix length is \(P(\varepsilon)\), and
		for \(m=2\),
		\[
		P(\varepsilon)
		<
		2\log_2(1/\varepsilon)
		\]
		for all sufficiently small \(\varepsilon\), because
		\(P(\varepsilon)<\log_2(1/\varepsilon)+1\). Hence the equivalence
		condition fails for \(m=2\).
	\end{enumerate}
\end{remark}

\begin{remark}[Scale dependence]\label{rem:scale_dependence}
	The choice of the logarithmic scale
	\(\varphi(\varepsilon)=\log_2(1/\varepsilon)\) in
	Definition~\ref{def:equiv} is not the only possible one. More
	generally, for any increasing unbounded function
	\(\varphi:(0,1]\to\mathbb R_{>0}\), one may define an equivalence
	relation \(\sim_\varphi\) by requiring
	\[
	\min\{\operatorname{lcp}(u_\varepsilon,v_\varepsilon),
	\operatorname{lcs}(u_\varepsilon,v_\varepsilon)\}
	\ge
	m\,\varphi(\varepsilon)
	\]
	for every fixed \(m\), eventually in \(\varepsilon\).
	If \(\varphi_1=o(\varphi_2)\), then \(\sim_{\varphi_2}\) is finer
	than \(\sim_{\varphi_1}\).
	
	For example, take \(\varphi(\varepsilon)=\varepsilon^{-\alpha}\)
	with \(\alpha>0\). Since
	\(\log_2(1/\varepsilon)=o(\varepsilon^{-\alpha})\), the polynomial
	scale is finer than the logarithmic one. Consequently, it separates
	more nets; it cannot identify two nets that are logarithmically
	inequivalent. In particular, the nets
	\(a^{\lfloor\log_2(1/\varepsilon)\rfloor}\) and
	\(a^{2\lfloor\log_2(1/\varepsilon)\rfloor}\) remain inequivalent
	under \(\sim_{\varepsilon^{-\alpha}}\), because their common prefix
	is \(\lfloor\log_2(1/\varepsilon)\rfloor\), and
	\[
	\frac{\lfloor\log_2(1/\varepsilon)\rfloor}{\varepsilon^{-\alpha}}
	\sim \varepsilon^\alpha\log_2(1/\varepsilon)
	\longrightarrow 0.
	\]
\end{remark}

The following observation is crucial: asymptotic equivalence identifies words that share the same asymptotic envelope at both extremes, but it completely ignores the interior of the words as long as the interior is short compared to the logarithmic window.

\begin{lemma}[Equivalence with \(O\)-notation]\label{lem:equiv_O}
	For moderate nets \((u_\varepsilon)\) and \((v_\varepsilon)\), the
	following are equivalent:
	\begin{enumerate}[label=(\roman*)]
		\item \((u_\varepsilon)\sim(v_\varepsilon)\) in the sense of
		Definition~\ref{def:equiv}.
		\item For every \(m\in\NN\), there exist \(C_m>0\) and
		\(\varepsilon_m>0\) such that
		\[
		d_{\mathrm{PS}}(u_\varepsilon,v_\varepsilon)\le C_m\varepsilon^m
		\qquad(\varepsilon<\varepsilon_m).
		\]
	\end{enumerate}
\end{lemma}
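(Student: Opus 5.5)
The plan is to prove the two implications separately. Throughout we use the identity
\[
d_{\mathrm{PS}}(u,v)=2^{-\min\{\lcp(u,v),\lcs(u,v)\}}
\]
from the preceding remark. It converts any lower bound on \(\min\{\lcp,\lcs\}\) into an upper bound on \(d_{\mathrm{PS}}\), and conversely. We also use the convention \(2^{-\infty}=0\) to cover the values of \(\varepsilon\) where \(u_\varepsilon=v_\varepsilon\).

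For (i)\(\Rightarrow\)(ii), fix \(m\in\NN\) and take \(\varepsilon_0\) from Definition~\ref{def:equiv} for this \(m\). For \(\varepsilon<\varepsilon_0\) we have \(\min\{\lcp,\lcs\}\ge m\log_2(1/\varepsilon)\), hence
\[
d_{\mathrm{PS}}(u_\varepsilon,v_\varepsilon)\le 2^{-m\log_2(1/\varepsilon)}=\varepsilon^m.
\]
So (ii) holds with \(C_m=1\) and \(\varepsilon_m=\varepsilon_0\). If the minimum is \(+\infty\), the distance is \(0\) and the bound is trivial.

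For (ii)\(\Rightarrow\)(i), the one point needing care is absorbing the constant \(C_m\). Given \(m\), apply (ii) with \(m+1\) instead of \(m\): for \(\varepsilon<\varepsilon_{m+1}\) we get \(d_{\mathrm{PS}}(u_\varepsilon,v_\varepsilon)\le C_{m+1}\varepsilon^{m+1}\). Set \(\varepsilon_0=\min\{\varepsilon_{m+1},1/C_{m+1}\}\), so that \(C_{m+1}\varepsilon\le 1\) and hence \(d_{\mathrm{PS}}(u_\varepsilon,v_\varepsilon)\le\varepsilon^m\) for \(\varepsilon<\varepsilon_0\). When \(u_\varepsilon\neq v_\varepsilon\), take \(-\log_2\) of \(2^{-\min\{\lcp,\lcs\}}\le\varepsilon^m\); this gives \(\min\{\lcp(u_\varepsilon,v_\varepsilon),\lcs(u_\varepsilon,v_\varepsilon)\}\ge m\log_2(1/\varepsilon)\). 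When \(u_\varepsilon=v_\varepsilon\), the minimum is \(+\infty\) and the inequality holds trivially. Thus (i) holds for every \(m\).

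No step is a genuine obstacle. The only subtlety is that the constant \(C_m\) cannot be removed at a fixed exponent, which is why (ii)\(\Rightarrow\)(i) invokes (ii) at exponent \(m+1\) and uses the extra factor of \(\varepsilon\) to dominate \(C_{m+1}\). Moderation of the nets plays no role in the argument.
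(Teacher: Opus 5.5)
Your proof is correct and follows essentially the same route as the paper: (i)\(\Rightarrow\)(ii) with \(C_m=1\), and (ii)\(\Rightarrow\)(i) by applying (ii) at exponent \(m+1\) and letting the extra factor of \(\varepsilon\) absorb the constant. The paper takes \(\varepsilon_0=\min(\varepsilon_{m+1},1/(2C_{m+1}))\) to get a strict inequality, but your choice \(1/C_{m+1}\) suffices because only \(\le\) is needed, and you additionally spell out the conversion back to the \(\lcp\)/\(\lcs\) condition.
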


\begin{proof}
	If (i) holds, then for each \(m\) there is \(\varepsilon_m\) with
	\(d_{\mathrm{PS}}(u_\varepsilon,v_\varepsilon)\le\varepsilon^m\), so
	(ii) holds with \(C_m=1\). Conversely, assume (ii). Fix \(m\in\NN\).
	Apply (ii) with \(m+1\): there are \(C_{m+1}>0\) and
	\(\varepsilon_{m+1}>0\) such that
	\(d_{\mathrm{PS}}\le C_{m+1}\varepsilon^{m+1}\) for all
	\(\varepsilon<\varepsilon_{m+1}\). Choose
	\(\varepsilon_0=\min(\varepsilon_{m+1},1/(2C_{m+1}))\). Then for
	\(\varepsilon<\varepsilon_0\),
	\[
	d_{\mathrm{PS}}\le C_{m+1}\varepsilon^{m+1}
	\le C_{m+1}\varepsilon\cdot\varepsilon^m
	\le \frac12\varepsilon^m < \varepsilon^m,
	\]
	which is the exact condition of Definition~\ref{def:equiv}.
\end{proof}

\begin{lemma}
	The relation $\sim$ is an equivalence relation on the set of moderate nets.
\end{lemma}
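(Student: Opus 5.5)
We check the three axioms directly from Definition~\ref{def:equiv}, with the ultrametric inequality for \(d_{\mathrm{PS}}\) doing the work in the transitivity step.

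\textbf{Reflexivity.} For any moderate net \((u_\varepsilon)\) we have \(\lcp(u_\varepsilon,u_\varepsilon)=\lcs(u_\varepsilon,u_\varepsilon)=+\infty\) by convention. Hence \(\min\{\lcp,\lcs\}=+\infty\ge m\log_2(1/\varepsilon)\) for every \(m\) and every \(\varepsilon\), so any \(\varepsilon_0\) works.

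\textbf{Symmetry.} Both \(\lcp(u,v)\) and \(\lcs(u,v)\) are symmetric in \((u,v)\), so the defining condition is symmetric as well.

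\textbf{Transitivity.} Suppose \((u_\varepsilon)\sim(v_\varepsilon)\) and \((v_\varepsilon)\sim(w_\varepsilon)\), and fix \(m\in\NN\).
\begin{enumerate}[label=(\alph*)]
\item By hypothesis there are thresholds \(\varepsilon_1,\varepsilon_2>0\) such that \(d_{\mathrm{PS}}(u_\varepsilon,v_\varepsilon)\le\varepsilon^m\) for \(\varepsilon<\varepsilon_1\) and \(d_{\mathrm{PS}}(v_\varepsilon,w_\varepsilon)\le\varepsilon^m\) for \(\varepsilon<\varepsilon_2\). This uses the remark identifying \(d_{\mathrm{PS}}\le\varepsilon^m\) with \(\min\{\lcp,\lcs\}\ge m\log_2(1/\varepsilon)\).
\item Set \(\varepsilon_0=\min(\varepsilon_1,\varepsilon_2)\). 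For \(\varepsilon<\varepsilon_0\), the Ultrametric property gives
\[
d_{\mathrm{PS}}(u_\varepsilon,w_\varepsilon)\le\max\{d_{\mathrm{PS}}(u_\varepsilon,v_\varepsilon),d_{\mathrm{PS}}(v_\varepsilon,w_\varepsilon)\}\le\varepsilon^m.
\]
\item Translating back through the same remark, \(\min\{\lcp(u_\varepsilon,w_\varepsilon),\lcs(u_\varepsilon,w_\varepsilon)\}\ge m\log_2(1/\varepsilon)\) for all \(\varepsilon<\varepsilon_0\). Since \(m\) was arbitrary, \((u_\varepsilon)\sim(w_\varepsilon)\).
\end{enumerate}

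Alternatively, one can argue directly with lengths: \(\lcp(u,w)\ge\min\{\lcp(u,v),\lcp(v,w)\}\), because a common prefix of length \(k\) of \(u,v\) and of \(v,w\) is a common prefix of \(u,w\). The same holds for suffixes, which avoids any reliance on the metric formulation.

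The only real work is transitivity, and it reduces entirely to the ultrametric inequality already established. The point that needs care is the value \(+\infty\) when some \(u_\varepsilon=v_\varepsilon\). Here \(d_{\mathrm{PS}}=0\) and the inequalities in (a)–(c) still hold, so no separate case analysis is needed. Moderation is never used, except that it keeps all three nets in the ambient set \(\cE_M(\Si^*)\).
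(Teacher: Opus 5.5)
Your proof is correct and follows the same route as the paper: reflexivity and symmetry are immediate, and transitivity comes from the ultrametric inequality for $d_{\mathrm{PS}}$. The only difference is cosmetic. You work with the exact bound $\varepsilon^m$ rather than $C\varepsilon^m$, so you avoid the detour through Lemma~\ref{lem:equiv_O} that the paper takes.
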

\begin{proof}
	\textit{Reflexivity.} Since $d_{\mathrm{PS}}(u_\varepsilon,u_\varepsilon)=0$, which is $O(\varepsilon^m)$ for every $m$, the condition holds trivially.
	
	\textit{Symmetry.} The distance $d_{\mathrm{PS}}$ is symmetric by definition, hence $(u_\varepsilon)\sim(v_\varepsilon)$ implies $(v_\varepsilon)\sim(u_\varepsilon)$.
	
	\textit{Transitivity.} Suppose $(u_\varepsilon)\sim(v_\varepsilon)$ and $(v_\varepsilon)\sim(w_\varepsilon)$. Fix an arbitrary $m\in\NN$. By assumption, there exist $\varepsilon_1$ and $C_1$ such that
	\[
	d_{\mathrm{PS}}(u_\varepsilon,v_\varepsilon) \le C_1 \varepsilon^m
	\qquad(\varepsilon<\varepsilon_1),
	\]
	and there exist $\varepsilon_2$ and $C_2$ such that
	\[
	d_{\mathrm{PS}}(v_\varepsilon,w_\varepsilon) \le C_2 \varepsilon^m
	\qquad(\varepsilon<\varepsilon_2).
	\]
	Set $\varepsilon_0=\min(\varepsilon_1,\varepsilon_2)$ and $C=\max(C_1,C_2)$. For $\varepsilon<\varepsilon_0$, both distances are bounded by $C\varepsilon^m$. The ultrametric inequality for $d_{\mathrm{PS}}$ gives
\[
d_{\mathrm{PS}}(u_\varepsilon,w_\varepsilon)
\le
\max\bigl\{d_{\mathrm{PS}}(u_\varepsilon,v_\varepsilon),\; d_{\mathrm{PS}}(v_\varepsilon,w_\varepsilon)\bigr\}
\le C\varepsilon^m.
\]
Thus \(d_{\mathrm{PS}}(u_\varepsilon,w_\varepsilon)=O(\varepsilon^m)\) for
the arbitrary \(m\). By Lemma~\ref{lem:equiv_O}, this implies
\((u_\varepsilon)\sim(w_\varepsilon)\).
\end{proof}	
	
\subsection{The generalized monoid \(\widetilde{\Si}^*\)}
\label{sec:monoid}

Next, we characterise the equivalence class of the empty word. Recall that the empty net is the constant net $(\la)_\varepsilon$, and that the negligible nets are precisely those that are eventually empty (Proposition~\ref{prop:negligible_eventually_empty}).

\begin{lemma}[Characterisation of the empty class]\label{lem:empty_class}
	For a moderate net $(w_\varepsilon)$,
	\[
	(w_\varepsilon)\sim (\la)_\varepsilon
	\qquad\text{if and only if}\qquad
	(w_\varepsilon)\in\cN(\Si^*).
	\]
\end{lemma}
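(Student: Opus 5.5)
The plan is to reduce both directions to the dichotomy already established in the proof of Proposition~\ref{prop:negligible_eventually_empty}: for every word \(w\in\Si^*\), \(d_{\mathrm{PS}}(w,\la)\in\{0,1\}\), with value \(0\) exactly when \(w=\la\). Together with Lemma~\ref{lem:equiv_O}, this shows that the condition defining \((w_\varepsilon)\sim(\la)_\varepsilon\) is literally the condition defining negligibility, because both are stated as \(d_{\mathrm{PS}}(w_\varepsilon,\la)\le C_m\varepsilon^m\) for every \(m\), eventually in \(\varepsilon\).

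For the forward direction, assume \((w_\varepsilon)\sim(\la)_\varepsilon\). By Lemma~\ref{lem:equiv_O} (i)\(\Rightarrow\)(ii), for every \(m\) there exist \(C_m>0\) and \(\varepsilon_m>0\) with \(d_{\mathrm{PS}}(w_\varepsilon,\la)\le C_m\varepsilon^m\) for \(\varepsilon<\varepsilon_m\). This is exactly Definition~\ref{def:negligible}. Since \((w_\varepsilon)\) is moderate by hypothesis, it follows that \((w_\varepsilon)\in\cN(\Si^*)\).

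For the converse, take \((w_\varepsilon)\in\cN(\Si^*)\). One could read off (ii) of Lemma~\ref{lem:equiv_O} directly. It is cleaner, though, to invoke Proposition~\ref{prop:negligible_eventually_empty}: there is \(\varepsilon_0\) with \(w_\varepsilon=\la\) for \(\varepsilon<\varepsilon_0\). By the convention \(\lcp(\la,\la)=\lcs(\la,\la)=+\infty\), the minimum in Definition~\ref{def:equiv} is then \(+\infty\ge m\log_2(1/\varepsilon)\) for every \(m\), with the same \(\varepsilon_0\) serving for all \(m\). Hence \((w_\varepsilon)\sim(\la)_\varepsilon\). The constant empty net is itself moderate, so the relation is taken within \(\cE_M(\Si^*)\) as required.

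I expect no real obstacle here. The only point needing care is to be explicit that the \(+\infty\) convention makes the inequality in Definition~\ref{def:equiv} hold when \(w_\varepsilon=\la\). Equivalently, one uses \(d_{\mathrm{PS}}(\la,\la)=0\), so the bound holds with any constant. One could also add the remark that the empty class consists exactly of the eventually empty nets.
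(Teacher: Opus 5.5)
Your proof is correct. The converse direction matches the paper's: you get eventual emptiness from Proposition~\ref{prop:negligible_eventually_empty}, then apply the \(+\infty\) convention.

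For the forward direction you take a different route. You observe, via Lemma~\ref{lem:equiv_O}, that \((w_\varepsilon)\sim(\la)_\varepsilon\) is verbatim the negligibility condition \(d_{\mathrm{PS}}(w_\varepsilon,\la)\le C_m\varepsilon^m\) for all \(m\). So the implication is purely definitional and needs no combinatorics. The paper instead uses only the case \(m=1\) of Definition~\ref{def:equiv}, together with the fact that \(\lcp(w,\la)=\lcs(w,\la)=0\) for nonempty \(w\). The bound \(\min\{\lcp,\lcs\}\ge\log_2(1/\varepsilon)>0\) then forces \(w_\varepsilon=\la\) eventually, and negligibility follows.

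Your version makes clear that the lemma is essentially a restatement once Lemma~\ref{lem:equiv_O} is available; as you note, both directions could be read off from it. The paper's version bypasses Lemma~\ref{lem:equiv_O} and directly yields the concrete description of the empty class as the eventually empty nets. That is the form invoked later, for instance in showing that \(\mathbf 1\) is the only unit.

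One small point of framing: your opening sentence says both directions reduce to the \(\{0,1\}\) dichotomy, but your forward direction never uses it. The dichotomy enters only through Proposition~\ref{prop:negligible_eventually_empty} in the converse.
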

\begin{proof}
	$(\Leftarrow)$ If $(w_\varepsilon)$ is negligible, then by the characterisation of negligible nets there exists $\varepsilon_0>0$ such that $w_\varepsilon=\la$ for all $\varepsilon<\varepsilon_0$. For such $\varepsilon$, we have $\lcp(w_\varepsilon,\la)=\lcs(w_\varepsilon,\la)=+\infty$ by convention. Hence, for any $m$,
	\[
	\min\{\lcp(w_\varepsilon,\la),\lcs(w_\varepsilon,\la)\}=+\infty
	\ge m\log_2(1/\varepsilon),
	\]
	so the equivalence condition is fulfilled.
	
	$(\Rightarrow)$ Suppose $(w_\varepsilon)\sim(\la)_\varepsilon$. Taking $m=1$, there exists $\varepsilon_1>0$ such that for all $\varepsilon<\varepsilon_1$,
	\[
	\min\{\lcp(w_\varepsilon,\la),\; \lcs(w_\varepsilon,\la)\}
	\ge \log_2(1/\varepsilon) > 0.
	\]
	For any non-empty word $w$, both $\lcp(w,\la)$ and $\lcs(w,\la)$ are equal to $0$. Therefore the only way for the minimum to be strictly positive is that $w_\varepsilon=\la$ (in which case both quantities are $+\infty$). Thus $w_\varepsilon=\la$ for every $\varepsilon<\varepsilon_1$; the net is eventually empty and hence negligible.
\end{proof}

The crucial fact used in the proof is that for any non-empty word $w$, $\lcp(w,\la)=\lcs(w,\la)=0$. The equivalence condition forces these quantities to grow at least as fast as $\log(1/\varepsilon)$, which is impossible unless the word is empty, where the value is conventionally $+\infty$. This connects perfectly with the characterization of negligible nets as exactly the eventually empty ones.

As a concrete illustration, the constant net $w_\varepsilon=a$ for every $\varepsilon$ is not equivalent to the empty net, since $d_{\mathrm{PS}}(a,\la)=1$ for all $\varepsilon$. On the other hand, any net that is eventually empty (e.g. $w_\varepsilon=\la$ for $\varepsilon<0.1$, arbitrary afterwards) is equivalent to the empty net.

Consequently, the equivalence relation $\sim$ is designed so that the equivalence class of the empty word coincides precisely with the set of negligible nets. The remaining classes group together words that share the same asymptotic behavior at both the left and right ends.

\begin{lemma}[Classes of finite words]\label{lem:finite_classes}
	Let \(w\in\Si^*\) be a finite word. For any moderate net
	\((u_\varepsilon)\in\cE_M(\Si^*)\), the following are equivalent:
	\begin{enumerate}[label=(\roman*)]
		\item \([(u_\varepsilon)] = [(w)_\varepsilon]\).
		\item There exists \(\varepsilon_0>0\) such that
		\(u_\varepsilon=w\) for all \(\varepsilon<\varepsilon_0\).
	\end{enumerate}
\end{lemma}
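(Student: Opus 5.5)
The direction (ii)\(\Rightarrow\)(i) is immediate. If \(u_\varepsilon=w\) for all \(\varepsilon<\varepsilon_0\), then for such \(\varepsilon\) we have \(\lcp(u_\varepsilon,w)=\lcs(u_\varepsilon,w)=+\infty\) by the convention recalled in Definition~\ref{def:equiv}. Hence the condition \(\min\{\lcp,\lcs\}\ge m\log_2(1/\varepsilon)\) holds for every \(m\), with the same threshold \(\varepsilon_0\). So \((u_\varepsilon)\sim(w)_\varepsilon\), i.e.\ the two classes coincide.

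For (i)\(\Rightarrow\)(ii), I follow the pattern of Lemma~\ref{lem:empty_class}, replacing the empty word by the fixed word \(w\). The key observation is the analogue of the fact \(\lcp(v,\la)=0\). For any word \(v\neq w\), \(\lcp(v,w)\le |w|\), since a common prefix of two words is a prefix of each, hence of length at most \(|w|\). Thus \(\min\{\lcp(v,w),\lcs(v,w)\}\) takes either the value \(+\infty\) (exactly when \(v=w\)) or a value in the finite set \(\{0,1,\dots,|w|\}\).

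Now take \(m=1\) in Definition~\ref{def:equiv}. This gives \(\varepsilon_1>0\) with \(\min\{\lcp(u_\varepsilon,w),\lcs(u_\varepsilon,w)\}\ge\log_2(1/\varepsilon)\) for all \(\varepsilon<\varepsilon_1\). Set \(\varepsilon_0=\min(\varepsilon_1,2^{-(|w|+1)})\). For \(\varepsilon<\varepsilon_0\) we have \(\log_2(1/\varepsilon)>|w|+1>|w|\), so the minimum exceeds \(|w|\). By the observation above this forces the minimum to be \(+\infty\), i.e.\ \(u_\varepsilon=w\). This yields (ii).

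There is no serious obstacle. The only point requiring care is the bound \(\lcp(v,w)\le|w|\) for \(v\ne w\), which also covers the case where one of \(v,w\) is a prefix of the other, so the common prefix is all of the shorter word. Here the convention \(+\infty\) is reserved for equal words only, so a proper-prefix situation still gives a finite value at most \(\min(|v|,|w|)\le|w|\). This is exactly where the lemma uses the fact that \(w\) is a fixed finite word, independent of \(\varepsilon\), while the logarithmic window grows without bound. The case \(w=\la\) recovers Lemma~\ref{lem:empty_class}, which is a consistency check.
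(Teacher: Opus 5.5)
Your proof is correct and takes essentially the same route as the paper: you bound $\lcp(v,w)\le|w|$ for $v\neq w$, then let the logarithmic threshold exceed $|w|$, which forces $u_\varepsilon=w$ eventually. The differences are cosmetic. You take $m=1$ where the paper takes $m=|w|+1$, and your bound covers $w=\la$ directly, whereas the paper handles that case separately via Lemma~\ref{lem:empty_class}.
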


\begin{proof}
	(ii)\(\Rightarrow\)(i) is immediate. Conversely, assume (i). If
	\(w\neq\lambda\), then for any \(u\neq w\), both
	\(\lcp(u,w)\) and \(\lcs(u,w)\) are at most \(|w|\). The equivalence
	condition with \(m=|w|+1\) forces, for sufficiently small
	\(\varepsilon\),
	\[
	\min\{\lcp(u_\varepsilon,w),\lcs(u_\varepsilon,w)\}
	\ge (|w|+1)\log_2(1/\varepsilon).
	\]
	For \(\varepsilon\) small enough that
	\((|w|+1)\log_2(1/\varepsilon)>|w|\), this is impossible unless
	\(u_\varepsilon=w\). Hence \(u_\varepsilon=w\) eventually. The case
	\(w=\lambda\) is Lemma~\ref{lem:empty_class}.
\end{proof}

\subsection{Definition and monoid structure}

\begin{definition}[Generalized monoid]\label{def:genmonoid}
	The \textbf{generalized monoid of words} is the quotient set
	\[
	\widetilde{\Si}^* \;:=\; \cE_M(\Si^*) / \sim .
	\]
	The equivalence class of a net $(w_\varepsilon)$ is denoted by $[(w_\varepsilon)]$.
	Concatenation is defined by
	\[
	[(u_\varepsilon)] \cdot [(v_\varepsilon)] \;:=\; [(u_\varepsilon v_\varepsilon)].
	\]
\end{definition}

\begin{remark}[Notation]
	The symbol \(\widetilde{\Si}^*\) is used for this asymptotic quotient.
	It should not be confused with the profinite completion of \(\Si^*\),
	which is often denoted \(\widehat{\Si^*}\) or \(\overline{\Si^*}\).
\end{remark}

Recall that a \emph{monoid} is a set \(M\) together with an associative
binary operation \(\cdot:M\times M\to M\) and an element \(e\in M\),
called the identity, such that \(e\cdot x=x\cdot e=x\) for every
\(x\in M\).

\begin{theorem}[Well-definedness and monoid structure]\label{thm:monoid_well_def}
	Concatenation is well defined and makes $\widetilde{\Si}^*$ a monoid with
	identity element $\mathbf{1}=[(\la)_\varepsilon]$.
\end{theorem}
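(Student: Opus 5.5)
The proof has three parts: well-definedness of the product, associativity, and the identity law. Each part transfers a property of coordinatewise concatenation on \(\cE_M(\Si^*)\) to the quotient.

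\textbf{Closure and well-definedness.} We already know that \(\cE_M(\Si^*)\) is closed under coordinatewise concatenation. So it remains to show that \(\sim\) is a congruence. Suppose \((u_\varepsilon)\sim(u'_\varepsilon)\) and \((v_\varepsilon)\sim(v'_\varepsilon)\), and fix \(m\in\NN\). Definition~\ref{def:equiv}, in its \(O\)-form from Lemma~\ref{lem:equiv_O}, gives thresholds \(\varepsilon_1,\varepsilon_2\) with
\[
d_{\mathrm{PS}}(u_\varepsilon,u'_\varepsilon)\le\varepsilon^m \text{ for } \varepsilon<\varepsilon_1,
\qquad
d_{\mathrm{PS}}(v_\varepsilon,v'_\varepsilon)\le\varepsilon^m \text{ for } \varepsilon<\varepsilon_2.
\]
Proposition~\ref{prop:subadditivity} then yields
\[
d_{\mathrm{PS}}(u_\varepsilon v_\varepsilon,u'_\varepsilon v'_\varepsilon)\le\max\{d_{\mathrm{PS}}(u_\varepsilon,u'_\varepsilon),d_{\mathrm{PS}}(v_\varepsilon,v'_\varepsilon)\}\le\varepsilon^m
\]
for \(\varepsilon<\min(\varepsilon_1,\varepsilon_2)\). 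Since \(m\) was arbitrary, this says exactly that \((u_\varepsilon v_\varepsilon)\sim(u'_\varepsilon v'_\varepsilon)\).

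It is convenient to change one factor at a time, using \(u v\sim u' v\sim u' v'\) together with transitivity. Either route works, because the subadditivity bound is symmetric in the two factors.

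\textbf{Associativity and identity.} These hold already at the level of representatives. Concatenation in \(\Si^*\) is associative, so for every \(\varepsilon\) we have the equality of words \((u_\varepsilon v_\varepsilon)w_\varepsilon=u_\varepsilon(v_\varepsilon w_\varepsilon)\). Equal nets are in particular equivalent by reflexivity, so associativity passes to the classes. Likewise \(\la u_\varepsilon=u_\varepsilon=u_\varepsilon\la\) coordinatewise, so \(\mathbf{1}=[(\la)_\varepsilon]\) is a two-sided identity. By Lemma~\ref{lem:empty_class} this class is the same as \(\cN(\Si^*)\), but that fact is not needed here.

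\textbf{Main obstacle.} The only substantive step is the congruence property. Its difficulty has already been absorbed into Proposition~\ref{prop:subadditivity}, and in particular into the case \(\lcp(u_1,v_1)=\infty\), where the common prefix extends into the second factor. One more point needs care: the convention \(\lcp(u,u)=+\infty\) must be consistent with the inequalities. This is harmless, since \(2^{-\infty}=0\) satisfies every bound of the form \(\le\varepsilon^m\).
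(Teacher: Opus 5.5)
Your proposal is correct and matches the paper's proof: the congruence property comes from Proposition~\ref{prop:subadditivity}, and associativity and the identity law are inherited coordinatewise from concatenation in $\Si^*$. The only cosmetic difference is that you read the constant-free bound $d_{\mathrm{PS}}\le\varepsilon^m$ directly off Definition~\ref{def:equiv}, whereas the paper carries constants $C_1,C_2$ and then invokes Lemma~\ref{lem:equiv_O} to return to the definition.
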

\begin{proof}
	\textbf{Well-definedness.}
	Assume $(u_\varepsilon)\sim(u'_\varepsilon)$ and $(v_\varepsilon)\sim(v'_\varepsilon)$.
	By Proposition~\ref{prop:subadditivity},
	\[
	d_{\mathrm{PS}}(u_\varepsilon v_\varepsilon,\, u'_\varepsilon v'_\varepsilon)
	\le \max\{d_{\mathrm{PS}}(u_\varepsilon,u'_\varepsilon),\,
	d_{\mathrm{PS}}(v_\varepsilon,v'_\varepsilon)\}.
	\]
	Given any $m\in\NN$, there exist $\varepsilon_1,\varepsilon_2>0$ and constants
	$C_1,C_2$ such that
	\[
	d_{\mathrm{PS}}(u_\varepsilon,u'_\varepsilon)\le C_1\varepsilon^m\;\;(\varepsilon<\varepsilon_1),\qquad
	d_{\mathrm{PS}}(v_\varepsilon,v'_\varepsilon)\le C_2\varepsilon^m\;\;(\varepsilon<\varepsilon_2).
	\]
		Set $\varepsilon_0=\min(\varepsilon_1,\varepsilon_2)$ and $C=\max(C_1,C_2)$.
	For all $\varepsilon<\varepsilon_0$,
	\[
	d_{\mathrm{PS}}(u_\varepsilon v_\varepsilon,\, u'_\varepsilon v'_\varepsilon)
	\le \max\{C\varepsilon^m,\,C\varepsilon^m\} = C\varepsilon^m.
	\]
	Thus \(d_{\mathrm{PS}}(u_\varepsilon v_\varepsilon,u'_\varepsilon v'_\varepsilon)
	=O(\varepsilon^m)\) for the arbitrary \(m\). By Lemma~\ref{lem:equiv_O},
	this implies \((u_\varepsilon v_\varepsilon)\sim(u'_\varepsilon v'_\varepsilon)\).
	Hence the product of classes does not depend on the chosen representatives.
	
	\textbf{Associativity.}
	For any $[u],[v],[w]\in\widetilde{\Si}^*$, pick representatives
	$(u_\varepsilon),(v_\varepsilon),(w_\varepsilon)$.  Then
	\[
	([u][v])[w] = [(u_\varepsilon v_\varepsilon)][(w_\varepsilon)]
	= [((u_\varepsilon v_\varepsilon) w_\varepsilon)]
	= [(u_\varepsilon (v_\varepsilon w_\varepsilon))]
	= [u]([v][w]),
	\]
	because concatenation in $\Si^*$ is associative.
	
	\textbf{Identity.}
	Let $\mathbf{1}=[(\la)_\varepsilon]$.  For any $[w]\in\widetilde{\Si}^*$,
	$\mathbf{1}\cdot[w] = [(\la)_\varepsilon][(w_\varepsilon)]
	= [(\la w_\varepsilon)] = [(w_\varepsilon)] = [w]$, and similarly on the right.
	Thus $\mathbf{1}$ is the identity element.
\end{proof}

\begin{remark}[Universal property]\label{rem:universal}
	The quotient \(\widetilde{\Si}^*\) is defined by an explicit asymptotic
	equivalence relation. We do not know whether it satisfies a simple
	universal property in any natural category of monoids. In particular,
	it is not claimed to be a completion in the sense of profinite or
	metric completions. This question is listed as an open problem in
	Section~\ref{sec:conclusion}.
\end{remark}

For later use we note a direct consequence of Definition~\ref{def:equiv}.
Set \(N_0(\varepsilon)=\lfloor\log_2(1/\varepsilon)\rfloor\). If
\((u_\varepsilon)\sim(v_\varepsilon)\), taking \(m=2\) gives
\(\lcp(u_\varepsilon,v_\varepsilon)\ge 2\log_2(1/\varepsilon) > N_0(\varepsilon)\)
for all sufficiently small \(\varepsilon\).  Hence the prefixes of length
\(N_0(\varepsilon)\) of the two nets eventually coincide, and any functional
that depends only on this logarithmic prefix is invariant under \(\sim\).
This observation underlies all the evaluation phenomena studied later.

The functionals that we evaluate on words are instances of \emph{moulds} in
the sense of \'Ecalle~\cite{Ecalle1981a,Ecalle1981b,Ecalle1985}.
A formal definition is as follows.

\begin{definition}[Word-indexed functional / simplified mould]\label{def:mould}
	For the purposes of this paper, we use the following simplified
	notion, motivated by Écalle's mould formalism but not identical to
	it. A \emph{word-indexed functional} (or \emph{simplified mould}) is a
	family \(M=(M_n)_{n\ge0}\) of functions
	\[
	M_n:\Si^n\longrightarrow\CC,
	\qquad
	M_0\in\CC,
	\]
	where \(\Si^n\) denotes the set of words of length \(n\) over \(\Si\).
	Equivalently, it is a function \(M:\Si^*\to\CC\) such that
	\(M(w)=M_n(w)\) when \(|w|=n\).
	In the main examples we use \emph{multiplicative word-indexed
		functionals}, i.e.\ functionals of the form
	\[
	M(w)=\prod_{k=1}^{|w|} f(k,w_k)
	\]
	for some fixed \(f:\NN\times\Si\to\CC\).
	Additive word-indexed functionals are defined analogously by
	\[
	\Lambda(w)=\sum_{k=1}^{|w|} g(k,w_k),
	\]
	and an additive example is discussed in Section~\ref{sec:strict}.
\end{definition}

This is an operational simplified notion of mould; the full
resurgence and alien calculus of Écalle is not used in the present
paper.

\begin{remark}[Moulds and divergent series]\label{rem:moulds_divergent}
	Moulds are particularly useful in the study of divergent series and
	divergent infinite products. In many problems of analysis (e.g.\
	iterated integrals, normal forms, resurgence), the coefficients or
	partial sums are naturally indexed by words. The asymptotic behaviour
	of a divergent object is often governed by the combinatorial structure
	of the underlying word, especially its prefix and suffix.
	The generalized monoid \(\widetilde{\Si}^*\) extends the index set
	\(\Si^*\) to classes of words with controlled infinite length, thereby
	providing a natural framework to discuss limits of such word-indexed
	families.
\end{remark}

In the generalized monoid $\widetilde{\Si}^*$
we evaluate moulds on the logarithmic prefixes of representatives; the
invariance under $\sim$ guarantees that the resulting generalized number
depends only on the class of the word.  The precise extension is discussed in
Section~\ref{sec:alg_prop}.	
\section{Algebraic properties of the generalized monoid}
\label{sec:alg_prop}

We now establish the fundamental algebraic features of \(\widetilde{\Si}^*\).
The quotient contains the free monoid faithfully, carries a natural preorder
and a reversal involution, and supports a well-defined extension of moulds
that depend only on logarithmic prefixes.  Cancellativity does not hold
in general, but the embedded copy of \(\Si^*\) remains cancellative.

\medskip
\noindent\textbf{Embedding of the free monoid.}
Every finite word \(w\in\Si^*\) can be regarded as the constant net
\(w_\varepsilon = w\) for all \(\varepsilon\).

\begin{definition}[Canonical injection]\label{def:iota}
	The map
	\[
	\iota : \Si^* \longrightarrow \widetilde{\Si}^*, \qquad
	\iota(w) = [(w)_\varepsilon],
	\]
	sends each finite word \(w\in\Si^*\) to the equivalence class of the
	constant net \((w)_\varepsilon\).
\end{definition}

The map $\iota$ is clearly a monoid homomorphism because
\([(u)_\varepsilon][(v)_\varepsilon] = [(uv)_\varepsilon]\) and
\(\iota(\la)=[(\la)_\varepsilon]=\mathbf{1}\).

\begin{proposition}\label{prop:iota_injective}
	\(\iota\) is an injective monoid homomorphism.
\end{proposition}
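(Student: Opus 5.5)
The proof has two parts: the homomorphism property, which is essentially the remark preceding the statement, and injectivity, which reduces to Lemma~\ref{lem:finite_classes}.

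\emph{Homomorphism.} For \(u,v\in\Si^*\), Definition~\ref{def:genmonoid} gives
\[
\iota(u)\iota(v)=[(u)_\varepsilon][(v)_\varepsilon]=[(uv)_\varepsilon]=\iota(uv).
\]
This computation depends on the product being well defined on classes, which is Theorem~\ref{thm:monoid_well_def}. The identity is preserved because \(\iota(\la)=[(\la)_\varepsilon]=\mathbf{1}\).

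\emph{Injectivity.} Suppose \(\iota(u)=\iota(v)\), that is, \([(u)_\varepsilon]=[(v)_\varepsilon]\). Apply Lemma~\ref{lem:finite_classes} with the fixed word \(w=v\) and the moderate net \((u_\varepsilon)=(u)_\varepsilon\). Constant nets are trivially moderate, since \(|u|\le C\varepsilon^{-N}\) holds with \(N=0\) and \(C=|u|+1\).

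The lemma yields some \(\varepsilon_0>0\) such that \(u_\varepsilon=v\) for all \(\varepsilon<\varepsilon_0\). Since \(u_\varepsilon=u\) for every \(\varepsilon\), this gives \(u=v\).

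If one prefers not to invoke the lemma, the same conclusion follows directly. Suppose \(u\neq v\). Then \(\lcp(u,v)\le\min(|u|,|v|)\) is a finite constant. The equivalence condition with \(m=1\) would require \(\lcp(u,v)\ge\log_2(1/\varepsilon)\) for all sufficiently small \(\varepsilon\), which fails as \(\varepsilon\to0^+\).

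There is no real obstacle here. The only point needing care is the case where one of the words is empty. There \(\lcp(u,\la)=0\) for \(u\neq\la\), so the direct argument still applies, and Lemma~\ref{lem:finite_classes} also covers this case through Lemma~\ref{lem:empty_class}.
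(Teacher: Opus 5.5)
Your proof is correct and essentially the paper's own. The homomorphism part is the same one-line computation. For injectivity, the paper uses your fallback argument directly: for $u\neq v$ the quantity $\min\{\lcp(u,v),\lcs(u,v)\}$ is a fixed finite constant, which cannot dominate $m\log_2(1/\varepsilon)$. It does not cite Lemma~\ref{lem:finite_classes}, but that lemma is proved by the same estimate, so the two routes do not really differ.
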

\begin{proof}
	First, \(\iota\) is a monoid homomorphism: for all \(u,v\in\Si^*\),
	\[
	\iota(uv) = [(uv)_\varepsilon]
	= [(u)_\varepsilon (v)_\varepsilon]
	= \iota(u)\iota(v).
	\]
	Now we prove injectivity.  Suppose \(\iota(u)=\iota(v)\), i.e.\
	\((u)_\varepsilon \sim (v)_\varepsilon\).  If \(u\neq v\), then both
	\(\lcp(u,v)\) and \(\lcs(u,v)\) are fixed finite numbers; hence their
	minimum is bounded by some constant \(K\in\NN\).  By
	Definition~\ref{def:equiv}, for every \(m\in\NN\) there exists
	\(\varepsilon_m>0\) such that
	\[
	\min\{\lcp(u,v),\lcs(u,v)\} \ge m\log_2(1/\varepsilon)
	\qquad\text{for all }\varepsilon<\varepsilon_m.
	\]
	Choosing \(m\) so large that \(m\log_2(1/\varepsilon)>K\) for all
	sufficiently small \(\varepsilon\) gives a contradiction.  Therefore
	\(u=v\), and \(\iota\) is injective.
	
	Thus \(\iota:\Si^*\hookrightarrow\widetilde{\Si}^*\) is an injective
	monoid homomorphism.  Its image \(\iota(\Si^*)\) is a cancellative
	submonoid of \(\widetilde{\Si}^*\), because \(\Si^*\) is cancellative and
	\(\iota\) is an injective homomorphism.
\end{proof}

\medskip
\noindent\textbf{Asymptotic divisibility preorder.}
The classical left-divisibility order on words extends to the quotient via
algebraic divisibility. Because cancellation fails, this is a divisibility
preorder rather than literal prefix containment. For two classes \([u],[v]\in\widetilde{\Si}^*\) we write
\([u]\preceq[v]\) if there exists a generalized word \([p]\) such that
\([v] = [u]\cdot[p]\).  Reflexivity is immediate from \([u]=[u]\cdot\mathbf{1}\).
If \([v]=[u][p]\) and \([w]=[v][q]\), then \([w]=[u]([p][q])\) by
associativity, giving transitivity.  Moreover, multiplying on the left
preserves the relation: if \([v]=[u][p]\) then
\([w][v] = ([w][u])[p]\), so \([w][u]\preceq[w][v]\).  Thus \(\preceq\) is a
preorder compatible with left concatenation.

\medskip
\noindent\textbf{Antisymmetry of the preorder.}
In this paragraph we take \(\Si=\{a,b\}\), so \(|\Si|\ge2\).

A preorder \(\preceq\) is \emph{antisymmetric} if
\([u]\preceq[v]\) and \([v]\preceq[u]\) imply \([u]=[v]\).
Antisymmetry fails in general because \(\sim\) can collapse interior
parts of words. For instance, let
\(L(\varepsilon)=\lfloor\varepsilon^{-1}\rfloor\),
\[
u_\varepsilon=a^{L(\varepsilon)}b,
\qquad
v_\varepsilon=a^{L(\varepsilon)}b\,a^{L(\varepsilon)}.
\]
Since \(L(\varepsilon)\le\varepsilon^{-1}\), all nets appearing in this
example are moderate. Then \(v_\varepsilon=u_\varepsilon a^{L(\varepsilon)}\),
so \([v]=[u][p]\) with \(p=[a^{L(\varepsilon)}]\). On the other hand,
taking \(q_\varepsilon=a^{L(\varepsilon)}b\), the net
\(v_\varepsilon q_\varepsilon=a^{L}ba^{2L}b\) has the same prefix and
suffix of length \(L+1\) as \(u_\varepsilon\).

Indeed, \(v_\varepsilon q_\varepsilon=a^L b a^{2L}b\). Its prefix of length
\(L+1\) is \(a^L b\), and its suffix of length \(L+1\) is also \(a^L b\).
Since \(u_\varepsilon=a^L b\), we have
\[
\operatorname{lcp}(v_\varepsilon q_\varepsilon,u_\varepsilon)\ge L+1,
\qquad
\operatorname{lcs}(v_\varepsilon q_\varepsilon,u_\varepsilon)\ge L+1.
\]
Therefore
\[
\min\{\operatorname{lcp},\operatorname{lcs}\}\ge L+1.
\]
Since \(L(\varepsilon)=\lfloor\varepsilon^{-1}\rfloor\) dominates every
fixed multiple of \(\log_2(1/\varepsilon)\) as \(\varepsilon\to0^+\),
for each fixed \(m\in\mathbb N\) there exists \(\varepsilon_0>0\) such
that
\[
\min\{\operatorname{lcp},\operatorname{lcs}\}
\ge
m\log_2(1/\varepsilon)
\]
for all \(\varepsilon<\varepsilon_0\). Hence \([v][q]=[u]\).

Thus \([u]\preceq[v]\) and \([v]\preceq[u]\), while \([u]\neq[v]\)
because \(\operatorname{lcs}(u_\varepsilon,v_\varepsilon)=0\) for
every \(\varepsilon\). Therefore \(\preceq\) is not antisymmetric.

Nevertheless, the restriction of \(\preceq\) to the embedded free
monoid \(\iota(\Si^*)\) is a partial order, because it corresponds
under the injective homomorphism \(\iota\) to the usual prefix order
on finite words.

\begin{remark}[Preorder on finite words]\label{rem:preorder_finite}
	Restricted to the embedded free monoid \(\iota(\Si^*)\), the preorder
	\(\preceq\) coincides with the usual prefix order.  Indeed, let
	\(u,v\in\Si^*\) and suppose \(\iota(u)\preceq\iota(v)\).  Then there
	exists \([p]\in\widetilde{\Si}^*\) with
	\([v]=[u]\,[p]=[up_\varepsilon]\).  Since \(u\) and \(v\) are finite,
	the equivalence \((v)_\varepsilon\sim(up_\varepsilon)_\varepsilon\)
	forces \(\operatorname{lcp}(v,up_\varepsilon)\ge m\log_2(1/\varepsilon)\)
	for all \(m\), eventually in \(\varepsilon\).  
	
	Since \((v)_\varepsilon\sim(up_\varepsilon)_\varepsilon\), for \(m=1\)
	Definition~\ref{def:equiv} gives \(\operatorname{lcp}(v,up_\varepsilon)
	\ge \log_2(1/\varepsilon)\) for all sufficiently small \(\varepsilon\).
	Choose \(\varepsilon\) so small that \(\log_2(1/\varepsilon)>|v|\).
	If \(up_\varepsilon\neq v\), then \(\operatorname{lcp}(v,up_\varepsilon)
	\le |v|\), because \(v\) is finite. This contradicts the previous
	inequality. Hence \(up_\varepsilon=v\) eventually. In particular, the
	first \(|u|\) symbols of \(up_\varepsilon\) are exactly \(u\), so \(u\)
	is a prefix of \(v\).  The converse
	is immediate: if \(u\) is a prefix of \(v\), then \(v=uw\) for some
	finite word \(w\), and \(\iota(v)=\iota(u)\,\iota(w)\).
\end{remark}

\medskip
\noindent\textbf{Basic semigroup-theoretic properties.}
The divisibility relation \(\preceq\) defined by
\[
[u]\preceq[v]
\quad\Longleftrightarrow\quad
\exists [p]\in\widetilde{\Si}^*
\text{ such that }[v]=[u][p]
\]
is a preorder. It is compatible with left multiplication: if
\([v]=[u][p]\), then, for every \([q]\),
\[
[q][v]=[q][u][p]=([q][u])[p],
\]
and hence
\[
[q][u]\preceq[q][v].
\]

Right compatibility, however, fails in general. We prove this by an explicit
counterexample over the alphabet $\Si=\{a,b\}$.

Consider the classes
\[
\mathbf u=[a],\qquad \mathbf v=[ab],\qquad \mathbf p=[b],\qquad
\mathbf q=[a].
\]
Then $\mathbf v=\mathbf u\mathbf p$, so $\mathbf u\preceq\mathbf v$.
We claim that no $\mathbf r\in\widetilde{\Si}^*$ satisfies
\[
\mathbf v\mathbf q=\mathbf u\mathbf q\,\mathbf r,
\]
that is, $[aba]=[aa]\,\mathbf r$.

Suppose, for contradiction, that such an $\mathbf r$ exists, and choose
representatives $(u_\varepsilon)\in[aa]$ and $(r_\varepsilon)\in\mathbf r$.
Since $(u_\varepsilon)\sim(aa)_\varepsilon$, Definition~\ref{def:equiv}
with $m=2$ gives $\varepsilon_2>0$ such that
\[
\operatorname{lcp}(u_\varepsilon,aa)\ge 2\log_2(1/\varepsilon)
\qquad(\varepsilon<\varepsilon_2).
\]
For $\varepsilon$ sufficiently small the right-hand side is larger than
$2$, so the first two letters of $u_\varepsilon$ are $aa$. Hence the
first two letters of $u_\varepsilon r_\varepsilon$ are also $aa$.

On the other hand, $(u_\varepsilon r_\varepsilon)\in[aa]\mathbf r=[aba]$,
so $(u_\varepsilon r_\varepsilon)\sim(aba)_\varepsilon$. Applying
Definition~\ref{def:equiv} with $m=2$ to this equivalence shows that,
for all sufficiently small $\varepsilon$,
\[
\operatorname{lcp}(u_\varepsilon r_\varepsilon,aba)
\ge 2\log_2(1/\varepsilon)>2,
\]
so the first two letters of $u_\varepsilon r_\varepsilon$ must be $ab$.
This contradicts the previous conclusion that they are $aa$.
Therefore no such $\mathbf r$ exists, and
\[
[aa]\not\preceq[aba].
\]
Thus $\preceq$ is a left-compatible asymptotic divisibility preorder,
but it is not right-compatible.

The only unit of \(\widetilde{\Si}^*\) is the identity \(\mathbf{1}\).
Indeed, if \([u][v]=\mathbf{1}\), then the net
\((u_\varepsilon v_\varepsilon)\) is eventually empty by
Lemma~\ref{lem:empty_class}. Hence both \((u_\varepsilon)\) and \((v_\varepsilon)\) are
eventually empty, so \([u]=[v]=\mathbf{1}\).
The monoid contains nontrivial idempotents. For example,
\[
E=\bigl[a^{\lfloor\varepsilon^{-1}\rfloor}\bigr]
\]
satisfies
\[
E^2=\bigl[a^{2\lfloor\varepsilon^{-1}\rfloor}\bigr]=E.
\]
Indeed, the common prefix and suffix of
\(a^{\lfloor\varepsilon^{-1}\rfloor}\) and
\(a^{2\lfloor\varepsilon^{-1}\rfloor}\) both have length
\(L(\varepsilon)=\lfloor\varepsilon^{-1}\rfloor\). For every fixed
\(m\in\mathbb N\),
\[
\frac{L(\varepsilon)}{\log_2(1/\varepsilon)}
\longrightarrow\infty
\qquad(\varepsilon\to0^+),
\]
so there exists \(\varepsilon_0>0\) such that
\[
L(\varepsilon)\ge m\log_2(1/\varepsilon)
\]
for all \(\varepsilon<\varepsilon_0\). Hence
\((a^{L(\varepsilon)})\sim(a^{2L(\varepsilon)})\), that is,
\(E^2=E\). Clearly \(E\neq\mathbf{1}\).

\medskip
\noindent\textbf{Green's relations: partial observations.}
The following remarks are only first observations; a complete classification
is left for future work. For completeness, recall the standard Green's
relations on a monoid; we follow the notation and terminology of
\cite{Howie1995}. For
\([u],[v]\in\widetilde{\Si}^*\), write
\[
[u]\,\mathcal{L}\,[v]
\quad\Longleftrightarrow\quad
\widetilde{\Si}^*[u]=\widetilde{\Si}^*[v],
\]
\[
[u]\,\mathcal{R}\,[v]
\quad\Longleftrightarrow\quad
[u]\widetilde{\Si}^*=[v]\widetilde{\Si}^*,
\]
and
\[
[u]\,\mathcal{J}\,[v]
\quad\Longleftrightarrow\quad
\widetilde{\Si}^*[u]\widetilde{\Si}^*
=
\widetilde{\Si}^*[v]\widetilde{\Si}^*.
\]
We further set
\[
\mathcal{H}=\mathcal{L}\cap\mathcal{R},
\qquad
\mathcal{D}=\mathcal{L}\circ\mathcal{R}=\mathcal{R}\circ\mathcal{L}.
\]

The idempotent
\[
E=\bigl[a^{\lfloor\varepsilon^{-1}\rfloor}\bigr]
\]
is not \(\mathcal{L}\)-related to the identity. Indeed, if
\(E\,\mathcal{L}\,\mathbf{1}\), then, since
\(\mathbf{1}\in\widetilde{\Si}^*\mathbf{1}\), there would exist
\([u]\in\widetilde{\Si}^*\) such that
\[
[u]E=\mathbf{1}.
\]
However,
\[
[u]E
=
\left[
u_\varepsilon
a^{\lfloor\varepsilon^{-1}\rfloor}
\right],
\]
whose representatives are nonempty for every sufficiently small
\(\varepsilon\). Hence, by Lemma~\ref{lem:empty_class}, \([u]E\neq\mathbf{1}\), a
contradiction. Thus
\[
E\not\mathcal{L}\mathbf{1}.
\]
The same argument, using \(E[u]\), shows that
\[
E\not\mathcal{R}\mathbf{1}.
\]

A complete classification of the Green relations
\(\mathcal{L},\mathcal{R},\mathcal{J},\mathcal{H}\), and
\(\mathcal{D}\) is beyond the scope of the present paper. However, the
following elementary results give a first nontrivial description of the
\(\mathcal L\)- and \(\mathcal R\)-classes of the identity, of finite
words, and of the unary submonoid.

\begin{definition}[Regular element]\label{def:regular}
	An element \(x\) of a monoid \(M\) is \emph{regular} if there exists
	\(y\in M\) such that
	\[
	xyx=x.
	\]
\end{definition}

\begin{remark}[Regularity of idempotents]\label{rem:idempotents_regular}
	Every idempotent \(e\in\widetilde{\Si}^*\) is regular, because
	\(e^2=e\) implies \(eee=e\). In particular, the idempotents considered
	below are regular. This fact will not be used separately; it is
	recorded only for completeness.
\end{remark}

\begin{lemma}[Green's lemma for idempotents]\label{lem:green_idempotents}
	Let \(e\in\widetilde{\Si}^*\) be an idempotent and let
	\(x\in\widetilde{\Si}^*\).
	\begin{enumerate}[label=(\roman*)]
		\item If \(e\,\mathcal{R}\,x\), then \(x=e x\).
		\item If \(e\,\mathcal{L}\,x\), then \(x=x e\).
	\end{enumerate}
\end{lemma}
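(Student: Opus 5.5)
The argument uses nothing about $\widetilde{\Si}^*$ beyond the monoid structure of Theorem~\ref{thm:monoid_well_def}: associativity and the identity $\mathbf{1}$. The relations $\mathcal R$ and $\mathcal L$ are defined through the principal right and left ideals, so we unpack the definitions and then use idempotency. This is the standard fact that an idempotent is a left identity on its $\mathcal R$-class and a right identity on its $\mathcal L$-class.

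For (i), assume $e\,\mathcal{R}\,x$, that is, $e\widetilde{\Si}^*=x\widetilde{\Si}^*$. Since $\mathbf{1}\in\widetilde{\Si}^*$, we have $x=x\mathbf{1}\in x\widetilde{\Si}^*=e\widetilde{\Si}^*$. Hence there is some $y\in\widetilde{\Si}^*$ with $x=ey$. Then associativity and $e^2=e$ give
\[
ex=e(ey)=(ee)y=ey=x .
\]
Notice that only the inclusion $x\widetilde{\Si}^*\subseteq e\widetilde{\Si}^*$ is used, so (i) in fact holds under the weaker hypothesis $x\in e\widetilde{\Si}^*$.

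For (ii), the argument is the left--right mirror. From $\widetilde{\Si}^*e=\widetilde{\Si}^*x$ and $x=\mathbf{1}x$ we get $x=ye$ for some $y$. Therefore
\[
xe=(ye)e=y(ee)=ye=x .
\]

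There is no real obstacle here. The only point to watch is that these ideals are taken in a monoid with identity, so that $x$ itself lies in $x\widetilde{\Si}^*$ and in $\widetilde{\Si}^*x$. This is guaranteed by the identity $\mathbf{1}=[(\la)_\varepsilon]$ from Theorem~\ref{thm:monoid_well_def}. No property of the asymptotic equivalence $\sim$ is needed, and cancellativity, which fails in $\widetilde{\Si}^*$, is never used.
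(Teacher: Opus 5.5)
Your proof is correct and is essentially the paper's argument: from $x\in e\widetilde{\Si}^*$ you write $x=ev$ and use $e^2=e$ to get $ex=x$, and you mirror this for $\mathcal{L}$. The paper also extracts $e=xu$ but never uses it, which confirms your remark that only the inclusion $x\widetilde{\Si}^*\subseteq e\widetilde{\Si}^*$ is needed.
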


\begin{proof}
	If \(e\,\mathcal{R}\,x\), then
	\(e\widetilde{\Si}^*=x\widetilde{\Si}^*\). Since
	\(e\in e\widetilde{\Si}^*\), there exists \(u\in\widetilde{\Si}^*\) such
	that \(e=x u\). Since \(x\in e\widetilde{\Si}^*\), there exists
	\(v\in\widetilde{\Si}^*\) such that \(x=e v\). Therefore
	\[
	x=e v=e e v=e x.
	\]
	The proof for \(\mathcal{L}\) is symmetric.
\end{proof}

\begin{proposition}[Green's classes of finite words]\label{prop:green_finite}
	Let \(w\in\Si^*\) be a nonempty finite word, and let
	\(e\in\widetilde{\Si}^*\) be an idempotent represented by a net
	\((u_\varepsilon)\) such that \(|u_\varepsilon|\to\infty\) as
	\(\varepsilon\to0^+\). Then \(\iota(w)\) is neither \(\mathcal{L}\)- nor
	\(\mathcal{R}\)-related to \(e\). In particular, finite words are not
	\(\mathcal{L}\)- or \(\mathcal{R}\)-related to the idempotent
	\(E=[a^{\lfloor\varepsilon^{-1}\rfloor}]\).
\end{proposition}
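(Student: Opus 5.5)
We argue by contradiction, using Lemma~\ref{lem:green_idempotents} to turn a Green relation into an equation and Lemma~\ref{lem:finite_classes} to turn that equation into a statement about lengths of representatives.

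\emph{The \(\mathcal{R}\) case.} Suppose \(e\,\mathcal{R}\,\iota(w)\). Since \(e\in e\widetilde{\Si}^*=\iota(w)\widetilde{\Si}^*\), there is some \([p]\) with \(e=\iota(w)[p]\). Since \(\iota(w)\in e\widetilde{\Si}^*\), there is some \([q]\) with \(\iota(w)=e[q]\). By Lemma~\ref{lem:green_idempotents}(i) we also have \(\iota(w)=e\,\iota(w)\), although the factorisation \(\iota(w)=e[q]\) is all we need.

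The equation \([(w)_\varepsilon]=[(u_\varepsilon q_\varepsilon)]\) lets us apply Lemma~\ref{lem:finite_classes}: there is \(\varepsilon_0>0\) with \(u_\varepsilon q_\varepsilon=w\) for all \(\varepsilon<\varepsilon_0\). In particular \(|u_\varepsilon|\le|w|\) for all small \(\varepsilon\), which contradicts \(|u_\varepsilon|\to\infty\).

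One point must be checked here. The hypothesis \(|u_\varepsilon|\to\infty\) is stated for one particular representative, while the product \(e[q]\) may be computed with any representative of \(e\). Since the product is well defined (Theorem~\ref{thm:monoid_well_def}), we may compute it using the given representative \((u_\varepsilon)\) together with an arbitrary representative \((q_\varepsilon)\) of \([q]\). So the argument is legitimate.

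\emph{The \(\mathcal{L}\) case.} Suppose \(e\,\mathcal{L}\,\iota(w)\). Then \(\iota(w)\in\widetilde{\Si}^*e\), so \(\iota(w)=[q]e=[(q_\varepsilon u_\varepsilon)]\) for some \([q]\). Lemma~\ref{lem:finite_classes} again gives \(q_\varepsilon u_\varepsilon=w\) eventually, hence \(|u_\varepsilon|\le|w|\) eventually, and we reach the same contradiction.

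In neither case do we use that \(w\) is nonempty. The empty word is handled anyway, by the earlier observation that \(E\not\mathcal{L}\mathbf{1}\).

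\emph{The idempotent \(E\).} Here \(E\) is an idempotent, as shown above, and its representative \(a^{\lfloor\varepsilon^{-1}\rfloor}\) has length tending to infinity. The general statement therefore applies.

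\emph{Main obstacle.} There is little difficulty. The only delicate point is the representative issue noted in the \(\mathcal{R}\) case: the length hypothesis concerns a specific net, so every product must be evaluated with that net. Lemma~\ref{lem:finite_classes} then forces the product net itself to be eventually equal to \(w\), so the length bound holds for the given net and not merely up to equivalence.
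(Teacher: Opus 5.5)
Your proof is correct, but it takes a different route from the paper.

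\textbf{The paper's route.} The paper uses idempotency, through Lemma~\ref{lem:green_idempotents}, to fix the cofactor as \(\iota(w)\) itself. This gives \(\iota(w)=e\,\iota(w)\). It then checks directly that \((u_\varepsilon w)\not\sim(w)\), because \(\lcs(u_\varepsilon w,w)\le|w|\) once \(u_\varepsilon\neq\lambda\). The \(\mathcal L\) case is symmetric, using \(\lcp(w,wu_\varepsilon)\le|w|\).

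\textbf{Your route.} You use only the one-sided inclusion \(\iota(w)\in e\widetilde{\Si}^*\) (respectively \(\widetilde{\Si}^*e\)), with an unknown cofactor \([q]\). You then let the rigidity of finite classes (Lemma~\ref{lem:finite_classes}) force \(u_\varepsilon q_\varepsilon=w\) eventually, which bounds \(|u_\varepsilon|\).

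\textbf{What each buys.} Your route is more general. It never uses idempotency, Lemma~\ref{lem:green_idempotents}, or \(w\neq\lambda\). So it actually shows that a class with a representative of length tending to infinity (indeed, not eventually bounded by \(|w|\)) is never a left or right divisor of \(\iota(w)\). That is, \(\iota(w)\notin e\widetilde{\Si}^*\cup\widetilde{\Si}^*e\), and the proposition, including the case \(w=\lambda\), is a special case. The paper's route has a known cofactor, so it needs only an explicit prefix/suffix comparison of \(u_\varepsilon w\) with \(w\). The price is that it is restricted to idempotents.

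Your handling of the representative issue is also what the paper does implicitly: you compute the product with the given net \((u_\varepsilon)\), which is legitimate by Theorem~\ref{thm:monoid_well_def}.
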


For the idempotents \(E=[a^{\lfloor\varepsilon^{-1}\rfloor}]\) and
\(W=[(w_\varepsilon)]\), the representing nets have lengths
\(\lfloor\varepsilon^{-1}\rfloor\to\infty\) and
\(2\lfloor\varepsilon^{-1}\rfloor\to\infty\), respectively, so the
hypothesis is satisfied.

\begin{proof}
	Suppose \(\iota(w)\,\mathcal{R}\,e\). By
	Lemma~\ref{lem:green_idempotents}(i), applied to the idempotent \(e\),
	we get \(\iota(w)=e\,\iota(w)\). Let \((u_\varepsilon)\) be a
	representative of \(e\) with \(|u_\varepsilon|\to\infty\). Then
	\(e\,\iota(w)\) is represented by \((u_\varepsilon w)_\varepsilon\).
	Since \(w\) is a finite word of length \(|w|\), any common suffix of
	\(u_\varepsilon w\) and \(w\) has length at most \(|w|\); indeed, \(w\)
	itself has only \(|w|\) letters. Hence
	\[
	\operatorname{lcs}(u_\varepsilon w,w)\le |w|
	\]
	for every \(\varepsilon\), so
	\(\min\{\operatorname{lcp},\operatorname{lcs}\}\) cannot dominate
	\(m\log_2(1/\varepsilon)\) for every \(m\). Therefore
	\((u_\varepsilon w)_\varepsilon\not\sim(w)_\varepsilon\), which
	contradicts \(\iota(w)=e\,\iota(w)\). Thus
	\(\iota(w)\not\mathcal{R}e\).
	
The proof for \(\mathcal{L}\) is analogous, using
Lemma~\ref{lem:green_idempotents}(ii) and the fact that any common
prefix of \(w\) and \(w u_\varepsilon\) has length at most \(|w|\),
because \(w\) is finite.
\end{proof}

\begin{proposition}[Green's relations in the unary submonoid]\label{prop:green_unary}
	Let \(\Si=\{a\}\), and let \(U\subseteq\widetilde{\Si}^*\) be the
	commutative submonoid of classes represented by unary nets
	\(a^{\ell(\varepsilon)}\). \emph{Computed inside \(U\)},
	\(\mathcal L=\mathcal R=\mathcal J\). Moreover:
	\begin{enumerate}[label=(\roman*)]
		\item The \(\mathcal L\)-class of the identity is \(\{\mathbf 1\}\).
		\item If \(\ell(\varepsilon)=p\) eventually for some \(p\in\mathbb N\),
		then the class \([a^p]\) is \(\mathcal L\)-related only to itself.
		\item If
		\(\ell(\varepsilon)/\log_2(1/\varepsilon)\to\infty\) and
		\(\ell(\varepsilon)>0\) eventually, then
		\([a^{\ell(\varepsilon)}]\) is the unique class of all unary nets of
		superlogarithmic length, and its \(\mathcal L\)-class is singleton.
	\end{enumerate}
\end{proposition}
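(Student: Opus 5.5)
Throughout, I will use one elementary computation. For \(n\neq k\),
\[
\lcp(a^n,a^k)=\lcs(a^n,a^k)=\min\{n,k\}.
\]
Hence two unary nets \(a^{\ell(\varepsilon)}\) and \(a^{k(\varepsilon)}\) are equivalent if and only if, for every \(m\), there is an \(\varepsilon_0\) such that for each \(\varepsilon<\varepsilon_0\) either \(\ell(\varepsilon)=k(\varepsilon)\) or \(\min\{\ell(\varepsilon),k(\varepsilon)\}\ge m\log_2(1/\varepsilon)\). Since \(\Si=\{a\}\), every net is unary. Concatenation of unary words commutes, so \(U\) is commutative. In a commutative monoid one has \(Ux=xU=UxU\) for every \(x\). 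Therefore the equalities \(\mathcal L=\mathcal R=\mathcal J\) inside \(U\) hold immediately, and it suffices to treat \(\mathcal L\).

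\emph{Item (i).} If \(x\,\mathcal L\,\mathbf 1\), then \(\mathbf 1\in Ux\), so \(zx=\mathbf 1\) for some \(z\). I then invoke the observation made above that the only unit of \(\widetilde{\Si}^*\) is \(\mathbf 1\). That argument uses Lemma~\ref{lem:empty_class}: \(zx=\mathbf 1\) forces \(z_\varepsilon x_\varepsilon\) to be eventually empty, so both factors are eventually empty. Hence \(x=\mathbf 1\).

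\emph{Item (ii).} Let \(y=[a^{k(\varepsilon)}]\) with \(y\,\mathcal L\,[a^p]\). I plan three steps.
\begin{enumerate}[label=(\alph*)]
\item There is \(z=[a^{r(\varepsilon)}]\) with \([a^p]=zy=[a^{r+k}]\). By Lemma~\ref{lem:finite_classes}, \(r(\varepsilon)+k(\varepsilon)=p\) eventually, so \(k(\varepsilon)\le p\) eventually.
\item There is \(z'=[a^{s(\varepsilon)}]\) with \(y=z'[a^p]=[a^{s+p}]\). Fix \(m\) with \(m\log_2(1/\varepsilon)>p\) for small \(\varepsilon\). Then the alternative \(\min\{k,s+p\}\ge m\log_2(1/\varepsilon)\) is impossible, because \(k\le p\) by (a).
\item So the criterion above forces \(k(\varepsilon)=s(\varepsilon)+p\ge p\) eventually. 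Combined with (a), \(k=p\) eventually, and hence \(y=[a^p]\).
\end{enumerate}

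\emph{Item (iii).} Uniqueness is immediate from the criterion. If \(\ell\) and \(k\) are both superlogarithmic, then \(\min\{\ell,k\}\ge m\log_2(1/\varepsilon)\) eventually for every \(m\), so \(a^\ell\sim a^k\).

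For the singleton claim, let \(y=[a^{k}]\,\mathcal L\,x=[a^\ell]\). There is \(z'=[a^{s}]\) with \(y=z'x=[a^{s+\ell}]\), and \(s+\ell\ge\ell\) is superlogarithmic. Fix \(m\). For each small \(\varepsilon\), either \(k=s+\ell\ge\ell\), or \(k\ge\min\{k,s+\ell\}\ge m\log_2(1/\varepsilon)\). In both cases \(k(\varepsilon)\ge\min\{\ell(\varepsilon),m\log_2(1/\varepsilon)\}\), and this equals \(m\log_2(1/\varepsilon)\) eventually because \(\ell\) is superlogarithmic. Since \(m\) is arbitrary, \(k\) is superlogarithmic. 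By uniqueness, \(y=x\). The hypothesis \(\ell>0\) eventually is then automatic.

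The main obstacle is this pointwise-in-\(\varepsilon\) dichotomy "equal or both long". The criterion only says that, for each \(\varepsilon\), one of the two alternatives holds, and which one may change with \(\varepsilon\). So each step must derive the desired inequality in both cases at every \(\varepsilon\) separately, rather than assuming one alternative holds on a whole tail. I would therefore state the unary criterion explicitly first, and phrase (ii) and (iii) as pointwise lower and upper bounds on \(k(\varepsilon)\).
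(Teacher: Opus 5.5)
Your proof is correct. The reduction \(\mathcal L=\mathcal R=\mathcal J\) via commutativity, item (i) via the ``only unit is \(\mathbf 1\)'' argument, and the uniqueness half of (iii) are essentially the paper's argument.

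For the singleton half of (iii), the paper notes that the superlogarithmic class \(S\) is a zero of \(U\), so \(U[x]=US=\{S\}\) forces \([x]=S\). That is the same absorption you use. You can also shortcut your pointwise lower bound on \(k\): you already have \(y=[a^{s+\ell}]\) with \(s+\ell\ge\ell\) superlogarithmic, so uniqueness gives \(y=x\) at once.

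The genuine difference is in (ii). The paper only compares \([a^p]\) with another constant class \([a^q]\). It derives \(q=p+x\) and \(p=q+y\) eventually, hence \(p=q\). It never excludes that \([a^p]\) is \(\mathcal L\)-related to some \([a^{k(\varepsilon)}]\) with \(k\) not eventually constant, which is what ``\(\mathcal L\)-related only to itself'' requires.

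Your steps (a)--(c) close exactly this case. Lemma~\ref{lem:finite_classes} applied to \([a^p]=zy\) gives \(k\le p\) eventually. Boundedness of \(k\) then rules out the ``both long'' alternative in \(y=z'[a^p]\), forcing \(k=s+p\ge p\). Step (a) alone would not suffice, since a bounded \(k\) could still oscillate; using both directions of mutual divisibility is what pins \(k=p\) eventually.

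So your route proves (ii) as stated, whereas the paper's proof of (ii) covers only the finite--finite case.
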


\begin{proof}
	Since \(U\) is commutative, principal left ideals and principal right
	ideals coincide. Hence \(\mathcal L=\mathcal R=\mathcal J\).
	
	(i) If \([a^p]\,\mathcal L\,\mathbf 1\), then
	\([a^p]\) and \(\mathbf 1\) divide each other in \(U\). In particular,
	there exists a length class \([x]\) such that
	\(\mathbf 1=[a^p][x]=[a^{p+x}]\). Thus \(a^{p+x}\sim\lambda\),
	which by Lemma~\ref{lem:empty_class} forces \(p+x=0\) eventually.
	Hence \(p=0\), so \([a^p]=\mathbf 1\).
	
	(ii) Let \(p,q\in\mathbb N\). If
	\([a^p]\,\mathcal L\,[a^q]\), then mutual divisibility gives length
	classes \([x],[y]\) such that
	\[
	[a^q]=[a^p][a^x]=[a^{p+x}],
	\qquad
	[a^p]=[a^q][a^y]=[a^{q+y}].
	\]
For unary words \(a^m,a^n\), equivalence means that, for every fixed
\(M\in\mathbb N\), there exists \(\varepsilon_0>0\) such that, for all
\(\varepsilon<\varepsilon_0\), either \(m(\varepsilon)=n(\varepsilon)\)
or \(\min\{m(\varepsilon),n(\varepsilon)\}\ge M\log_2(1/\varepsilon)\).
Since \(p,q\) are finite constants, the first displayed equivalence
forces \(q=p+x\) eventually, and the second forces \(p=q+y\)
eventually. Therefore \(x=q-p\) and \(y=p-q\) eventually. Since both
\(x,y\) are nonnegative lengths, this implies \(p=q\).
	
	(iii) If \(\ell\) and \(k\) are two superlogarithmic positive length
	nets, then for every \(m\in\mathbb N\) we have
	\(\ell(\varepsilon)\ge m\log_2(1/\varepsilon)\) and
	\(k(\varepsilon)\ge m\log_2(1/\varepsilon)\) for all sufficiently small
	\(\varepsilon\). Hence
	\[
	\min\{\ell(\varepsilon),k(\varepsilon)\}
	\ge m\log_2(1/\varepsilon),
	\]
	so \(a^\ell\sim a^k\).Thus all superlogarithmic unary classes coincide.
	
We claim that the superlogarithmic class
\[
S=[a^{\ell(\varepsilon)}],
\qquad
\frac{\ell(\varepsilon)}{\log_2(1/\varepsilon)}\to\infty,
\]
is a zero element of the submonoid \(U\). Indeed, for any
\([a^{k(\varepsilon)}]\in U\),
\[
S\,[a^{k(\varepsilon)}]
=
[a^{\ell(\varepsilon)+k(\varepsilon)}]
=
S,
\]
because \(\min\{\ell(\varepsilon),\ell(\varepsilon)+k(\varepsilon)\}
=\ell(\varepsilon)\) and \(\ell\) dominates every fixed multiple of
\(\log_2(1/\varepsilon)\). Thus \(S\) absorbs every element of \(U\) on
both sides, since \(U\) is commutative.

Now suppose \([x]\in U\) satisfies \([x]\,\mathcal L\,S\) inside \(U\).
Then \(U[x]=U[S]\). But \(U[S]=\{S\}\), because \(S\) is a zero
element. Hence \(U[x]=\{S\}\), so in particular \(x\in U[x]\) gives
\(x=S\). Therefore the \(\mathcal L\)-class of \(S\) inside \(U\) is
the singleton \(\{S\}\).
\end{proof}

\begin{remark}[Interpretation of Green's relations]\label{rem:green_interpretation}
	In the classical free monoid, $\mathcal{L}$-equivalence corresponds to
	having the same right ideal structure, which for words means being
	suffixes of one another; $\mathcal{R}$-equivalence corresponds to being
	prefixes of one another. In $\widetilde{\Si}^*$, the intuition is
	similar but asymptotic: two classes are $\mathcal{L}$-related if their
	left principal ideals coincide, which morally means they share the same
	asymptotic suffix profile under left multiplication; $\mathcal{R}$-related
	classes share the same asymptotic prefix profile under right
	multiplication. The reversal involution interchanges the
	two relations, $[u]\,\mathcal{L}\,[v]\iff [u]^\vee\,\mathcal{R}\,[v]^\vee$.
	
		The present paper proves several partial results concerning Green's
	relations: the idempotent \(E=[a^{\lfloor\varepsilon^{-1}\rfloor}]\)
	satisfies \(E\not\mathcal{L}\mathbf{1}\) and
	\(E\not\mathcal{R}\mathbf{1}\); finite words are not
	\(\mathcal{L}\)- or \(\mathcal{R}\)-related to idempotents of unbounded
	length; and in the unary submonoid one has
	\(\mathcal L=\mathcal R=\mathcal J\), with a simple description of the
	classes of finite and superlogarithmic unary words. A full
	classification of the Green relations in \(\widetilde{\Si}^*\)
	remains open. The main obstruction appears to be the failure of
	right-cancellativity, which prevents one from translating mutual
	divisibility into a clean prefix-suffix comparison, together with the
	presence of many idempotents arising from superlogarithmic unary nets.
\end{remark}

\begin{remark}[Oscillating idempotent]\label{rem:osc_idempotent}
	Let \(W=[(w_\varepsilon)]\) be the class of the oscillating net
	\(w_\varepsilon\) defined in~\eqref{eq:osc_net}. Then \(W^2=W\).
	Indeed, \(w_\varepsilon\) has length \(2\lfloor\varepsilon^{-1}\rfloor\),
	and \(w_\varepsilon^2\) has \(w_\varepsilon\) as both prefix and
	suffix of length \(2\lfloor\varepsilon^{-1}\rfloor\). For every fixed
	\(m\in\mathbb N\),
	\[
	\frac{2\lfloor\varepsilon^{-1}\rfloor}{\log_2(1/\varepsilon)}
	\longrightarrow\infty
	\qquad(\varepsilon\to0^+),
	\]
	so the equivalence condition holds eventually.
By Remark~\ref{rem:idempotents_regular}, \(W\) is regular, and
by the argument of Proposition~\ref{prop:green_finite}, \(W\) is not
\(\mathcal{L}\)- or \(\mathcal{R}\)-related to any finite word.
\end{remark}

\medskip
\noindent\textbf{Partial structural observations.}
The following elementary facts illustrate some structural features of
\(\widetilde{\Si}^*\) without attempting a full classification.

\begin{proposition}[Unary submonoid]\label{prop:unary_submonoid}
	Let \(\Si=\{a\}\). Then the set of classes represented by unary nets
	\(a^{\ell(\varepsilon)}\), where \(\ell\) is moderate, forms a commutative
	submonoid of \(\widetilde{\Si}^*\) isomorphic to a quotient of the
	moderate nets of natural numbers under the analogous asymptotic
	equivalence.
\end{proposition}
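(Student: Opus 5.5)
The plan is to identify the unary monoid with length nets. When \(\Si=\{a\}\), every finite word is \(a^n\) for a unique \(n\in\NN\). So every net of words is uniquely \((a^{\ell(\varepsilon)})\) for a net \(\ell:(0,1]\to\NN\), and it is moderate exactly when \(\ell(\varepsilon)\le C\varepsilon^{-N}\) eventually. Call such \(\ell\) a moderate net of natural numbers, and let \(\cE_M(\NN)\) be this set with pointwise addition.

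\textbf{Step 1: the map is a monoid isomorphism.} The map \(\Phi:\cE_M(\NN)\to\cE_M(\Si^*)\), \(\ell\mapsto(a^{\ell(\varepsilon)})\), is a bijection. It is also a monoid isomorphism, since \(a^{\ell}a^{k}=a^{\ell+k}\) and \(0\mapsto\la\). Commutativity of concatenation on unary words follows, so the set \(U\) of classes is closed under products, contains \(\mathbf 1=[a^0]\), and is commutative. Associativity is inherited from Theorem~\ref{thm:monoid_well_def}. Hence \(U\) is a commutative submonoid; in fact \(U=\widetilde{\Si}^*\) here.

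\textbf{Step 2: transport the equivalence.} Compute \(\lcp\) and \(\lcs\) for unary words. If \(m\ne n\), then \(\lcp(a^m,a^n)=\lcs(a^m,a^n)=\min\{m,n\}\); if \(m=n\), both are \(+\infty\). So define on \(\cE_M(\NN)\) the relation \(\ell\approx k\) iff for every \(M\in\NN\) there is \(\varepsilon_0\) such that, for all \(\varepsilon<\varepsilon_0\), either \(\ell(\varepsilon)=k(\varepsilon)\) or \(\min\{\ell(\varepsilon),k(\varepsilon)\}\ge M\log_2(1/\varepsilon)\). This is exactly the formulation already used in the proof of Proposition~\ref{prop:green_unary}(ii). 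By construction, \(\ell\approx k\iff\Phi(\ell)\sim\Phi(k)\). This gives that \(\approx\) is an equivalence relation, transported from the lemma that \(\sim\) is one. It also gives compatibility of \(\approx\) with addition, transported from the well-definedness part of Theorem~\ref{thm:monoid_well_def} and Proposition~\ref{prop:subadditivity}.

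\textbf{Step 3: conclude.} \(\Phi\) descends to a bijection \(\cE_M(\NN)/\!\approx\;\to U\) that respects products and identities, which is the claimed isomorphism.

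The only step needing care is the \(\lcp\)/\(\lcs\) computation and the convention for equal words. I expect no real obstacle, only bookkeeping that the "either equal or both large" disjunction is pointwise in \(\varepsilon\).
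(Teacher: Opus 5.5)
Your proposal is correct and follows the paper's own argument. Both show that unary nets are closed under concatenation with lengths adding, that the result is commutative because addition is, and that \(\sim\) on unary nets reduces to ``eventually, \(\ell(\varepsilon)=k(\varepsilon)\) or \(\min\{\ell(\varepsilon),k(\varepsilon)\}\ge m\log_2(1/\varepsilon)\)''. Your version only makes the transport of structure along \(\ell\mapsto(a^{\ell(\varepsilon)})\) explicit, and correctly observes that for \(\Si=\{a\}\) this submonoid is all of \(\widetilde{\Si}^*\).
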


\begin{proof}
	The concatenation of two unary nets
	\(a^{p(\varepsilon)}\) and \(a^{q(\varepsilon)}\) is
	\(a^{p(\varepsilon)+q(\varepsilon)}\), which is again unary and
	moderate. Since equivalence is compatible with concatenation by
	Proposition~\ref{prop:subadditivity}, the operation descends to the
	quotient. The resulting submonoid is commutative because addition of
	lengths is commutative.
		Explicitly, \(a^{p(\varepsilon)}\sim a^{q(\varepsilon)}\) if and only
	if for every fixed \(m\in\mathbb N\) there exists \(\varepsilon_0>0\)
	such that, for all \(\varepsilon<\varepsilon_0\), either
	\[
	p(\varepsilon)=q(\varepsilon)
	\]
	or
	\[
	\min\{p(\varepsilon),q(\varepsilon)\}
	\ge
	m\log_2(1/\varepsilon).
	\]
	Thus the unary submonoid is isomorphic to the quotient of moderate nets of
	natural numbers by this relation.
\end{proof}

\begin{proposition}[Idempotents in the unary submonoid]\label{prop:unary_idempotents}
	Let \(\Si=\{a\}\), and let \(u_\varepsilon=a^{\ell(\varepsilon)}\) be a
	unary net with \(\ell(\varepsilon)\in\NN\) for every \(\varepsilon\).
	\begin{enumerate}[label=(\roman*)]
		\item If \(\ell(\varepsilon)=0\) eventually, then \([u]=\mathbf 1\).
		\item If \(\ell(\varepsilon)>0\) eventually, then \([u]\) is an
		idempotent if and only if
		\[
		\frac{\ell(\varepsilon)}{\log_2(1/\varepsilon)}
		\longrightarrow\infty
		\qquad\text{as }\varepsilon\to0^+.
		\]
		Equivalently, for every \(m\in\NN\) one has
		\[
		\ell(\varepsilon)\ge m\log_2(1/\varepsilon)
		\]
		for all sufficiently small \(\varepsilon\).
	\end{enumerate}
	In particular, any unary net with length growing at least like
	\(\varepsilon^{-\alpha}\), \(\alpha>0\), represents an idempotent.
	Bounded positive lengths, as well as lengths of order
	\(O(\log_2(1/\varepsilon))\), do not represent idempotents.
\end{proposition}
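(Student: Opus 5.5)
Part (i) is immediate from Lemma~\ref{lem:empty_class}: if \(\ell(\varepsilon)=0\) eventually, then \(u_\varepsilon=\la\) eventually, so the net is negligible by Proposition~\ref{prop:negligible_eventually_empty} and \([u]=\mathbf 1\).

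For (ii), I will reduce idempotency to a statement about lengths. Idempotency \([u]^2=[u]\) means \((a^{2\ell(\varepsilon)})\sim(a^{\ell(\varepsilon)})\). For unary words \(a^p,a^q\) with \(p\neq q\), both \(\lcp\) and \(\lcs\) equal \(\min\{p,q\}\). Since \(\ell(\varepsilon)>0\) eventually, we have \(2\ell(\varepsilon)\neq\ell(\varepsilon)\), and so
\[
\min\{\lcp,\lcs\}(a^{2\ell(\varepsilon)},a^{\ell(\varepsilon)})=\ell(\varepsilon)
\]
for all small \(\varepsilon\). By Definition~\ref{def:equiv}, the equivalence then holds if and only if, for every \(m\in\NN\), \(\ell(\varepsilon)\ge m\log_2(1/\varepsilon)\) eventually. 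This is the second formulation in (ii), and it is the same criterion already recorded in the proof of Proposition~\ref{prop:unary_submonoid}, since the alternative \(p(\varepsilon)=q(\varepsilon)\) never occurs here.

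It remains to check that this formulation is equivalent to \(\ell(\varepsilon)/\log_2(1/\varepsilon)\to\infty\). Both directions follow from the definition of divergence to \(+\infty\) along \(\varepsilon\to0^+\). Given a real \(K\), take an integer \(m\ge K\); conversely, integers are particular reals. This is the only step needing any care, and it is routine.

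For the closing remarks, if \(\ell(\varepsilon)\ge c\,\varepsilon^{-\alpha}\), then \(\varepsilon^{-\alpha}/\log_2(1/\varepsilon)\to\infty\), so \([u]\) is an idempotent. If \(\ell\) is positive and bounded, or \(0<\ell(\varepsilon)\le C\log_2(1/\varepsilon)\), then taking \(m>C\) (for bounded \(\ell\), \(m=1\) suffices eventually) shows that the criterion fails. Hence \([u]\) is not idempotent, by the equivalence just established.
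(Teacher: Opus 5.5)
Your proposal is correct and follows essentially the same route as the paper: reduce idempotency to \((a^{\ell(\varepsilon)})\sim(a^{2\ell(\varepsilon)})\), observe that the common prefix and common suffix both have length \(\ell(\varepsilon)\), and read off the criterion from Definition~\ref{def:equiv}. Your additional checks fill in details the paper leaves implicit: that \(2\ell\neq\ell\), so the \(+\infty\) convention never applies; that the two formulations of the criterion are equivalent; and the verification of the closing remarks.
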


\begin{proof}
	Part (i) is immediate from Lemma~\ref{lem:empty_class}. For part (ii),
	assume that \(\ell(\varepsilon)>0\) eventually. Then
	\([u]^2=[a^{2\ell(\varepsilon)}]\). The common prefix and the common
	suffix of \(a^{\ell(\varepsilon)}\) and \(a^{2\ell(\varepsilon)}\) both
	have length \(\ell(\varepsilon)\). Hence
	\([a^{\ell}]=[a^{2\ell}]\) if and only if, for every \(m\in\NN\),
	\[
	\ell(\varepsilon)\ge m\log_2(1/\varepsilon)
	\]
	eventually. This is equivalent to
	\(\ell(\varepsilon)/\log_2(1/\varepsilon)\to\infty\).
\end{proof}

\begin{remark}[Base independence]\label{rem:log_base}
	The equivalence relation \(\sim\) is unchanged if \(\log_2\) is replaced
	by \(\log_c\) for any \(c>1\), because
	\(\log_c(1/\varepsilon)=\log_c 2\cdot\log_2(1/\varepsilon)\) and the
	constant factor does not affect the condition
	\(\min\{\lcp,\lcs\}\ge m\log(1/\varepsilon)\).
\end{remark}

\medskip
\noindent\textbf{Cancellation.}
A monoid is \emph{left cancellative} if \(ab=ac\) implies \(b=c\).
Cancellation fails in general in \(\widetilde{\Si}^*\). Take
\[
U=[a^{\lfloor\varepsilon^{-1}\rfloor}],
\qquad
P=[a^{\lfloor\log_2(1/\varepsilon)\rfloor}].
\]
Let \(L(\varepsilon)=\lfloor\varepsilon^{-1}\rfloor\). Both \(U\) and
\(UP\) are represented by nets of powers of \(a\) whose common prefix
and suffix lengths are at least \(L(\varepsilon)\). For every fixed
\(m\in\mathbb N\),
\[
\frac{L(\varepsilon)}{\log_2(1/\varepsilon)}
\longrightarrow\infty
\qquad(\varepsilon\to0^+),
\]
so \(L(\varepsilon)\ge m\log_2(1/\varepsilon)\) for all sufficiently
small \(\varepsilon\). Hence \(UP=U=U\mathbf{1}\). Since
\(a^{\lfloor\log_2(1/\varepsilon)\rfloor}\) is not eventually empty,
\(P\neq\mathbf{1}\). Therefore left cancellation fails.

\medskip
\noindent\textbf{Reversal involution.}
The bidirectional metric is invariant under reversal:
\[
d_{\mathrm{PS}}(u^\vee,v^\vee) = \max\{2^{-\lcp(u^\vee,v^\vee)},
2^{-\lcs(u^\vee,v^\vee)}\}
= \max\{2^{-\lcs(u,v)},2^{-\lcp(u,v)}\} = d_{\mathrm{PS}}(u,v).
\]
Consequently, if \((u_\varepsilon)\sim(v_\varepsilon)\) then
\(d_{\mathrm{PS}}(u_\varepsilon^\vee,v_\varepsilon^\vee)
= d_{\mathrm{PS}}(u_\varepsilon,v_\varepsilon) = \mathcal{O}(\varepsilon^m)\)
for every \(m\), so the reversed nets are also equivalent.  Hence we can
define a map
\[
{}^\vee : \widetilde{\Si}^* \longrightarrow \widetilde{\Si}^*,\qquad
[w]^\vee := [(w_\varepsilon^\vee)],
\]
which does not depend on the chosen representative.  A direct computation
shows that \(([u][v])^\vee = [v]^\vee[u]^\vee\),
\(([w]^\vee)^\vee = [w]\), \(\mathbf{1}^\vee = \mathbf{1}\), and
\(\iota(w^\vee) = \iota(w)^\vee\) for every finite word \(w\).  Thus
\(\widetilde{\Si}^*\) is equipped with an involutive anti-automorphism.

\medskip
\noindent\textbf{Generalized complex numbers \cite{GrosserEtAl2001}.}
Let
\[
\mathcal{E}_M(\CC)
=
\left\{
(z_\varepsilon)_{\varepsilon\in(0,1]}:
\exists N,C>0,\ |z_\varepsilon|\le C\varepsilon^{-N}
\text{ for all sufficiently small }\varepsilon
\right\}
\]
and
\[
\mathcal{N}(\CC)
=
\left\{
(z_\varepsilon):
\forall m\ \exists C_m>0,\ |z_\varepsilon|\le C_m\varepsilon^{m}
\text{ for all sufficiently small }\varepsilon
\right\}.
\]
Then
\[
\widetilde{\CC}:=\mathcal{E}_M(\CC)/\mathcal{N}(\CC).
\]

The set \(\mathcal N(\CC)\) is an ideal in \(\mathcal E_M(\CC)\): if
\((z_\varepsilon)\in\mathcal E_M(\CC)\) and
\((n_\varepsilon)\in\mathcal N(\CC)\), then for every \(m\) we have
\(|n_\varepsilon|\le C_m\varepsilon^{m+N}\) eventually, and hence
\[
|z_\varepsilon n_\varepsilon|
\le C\varepsilon^{-N}\cdot C_m\varepsilon^{m+N}
= C C_m \varepsilon^m,
\]
so the product is negligible. Thus the quotient is a well-defined algebra.

\noindent\textbf{Logarithmic-prefix functionals.}
Let \((F_\varepsilon)_{\varepsilon\in(0,1]}\) be a family of maps
\(F_\varepsilon:\Si^*\to\CC\) satisfying the following two conditions:
\begin{enumerate}[label=(\roman*)]
	\item for every \(\varepsilon\), \(F_\varepsilon(w)\) depends only on
	the prefix \(w[1\ldots N_0(\varepsilon)]\), where
	\(N_0(\varepsilon)=\lfloor\log_2(1/\varepsilon)\rfloor\) and, by
	convention, \(w[1\ldots N_0(\varepsilon)]:=w\) whenever
	\(N_0(\varepsilon)>|w|\);
	\item for every moderate net \((w_\varepsilon)\in\cE_M(\Si^*)\), the net
	\((F_\varepsilon(w_\varepsilon))_\varepsilon\) belongs to
	\(\mathcal E_M(\CC)\).
\end{enumerate}
Then the assignment
\[
\widetilde F\bigl([(w_\varepsilon)]\bigr)
:=
\bigl[(F_\varepsilon(w_\varepsilon))\bigr]
\]
is a well-defined map \(\widetilde F:\widetilde{\Si}^*\to\widetilde{\CC}\).

\begin{proof}
	If \((u_\varepsilon)\sim(v_\varepsilon)\), then
	Definition~\ref{def:equiv} with \(m=2\) gives
	\[
	\lcp(u_\varepsilon,v_\varepsilon)
	\ge 2\log_2(1/\varepsilon)
	> N_0(\varepsilon)
	\]
	for all sufficiently small \(\varepsilon\). Hence the prefixes of
	length \(N_0(\varepsilon)\) of \(u_\varepsilon\) and \(v_\varepsilon\)
	coincide. By condition (i),
	\(F_\varepsilon(u_\varepsilon)=F_\varepsilon(v_\varepsilon)\)
	eventually. By condition (ii), the resulting nets are moderate in
	\(\mathcal E_M(\CC)\), so they define the same class in
	\(\widetilde{\CC}\).
\end{proof}

\begin{proposition}[Invariance criterion]\label{prop:invariance_window}
	Let \(F_\varepsilon:\Si^*\to\CC\) be a family of functionals.
	\begin{enumerate}[label=(\roman*)]
		\item If \(F_\varepsilon(u)\) depends only on the prefix
		\(u[1\ldots N_0(\varepsilon)]\) and, for every moderate net
		\((w_\varepsilon)\), the net \((F_\varepsilon(w_\varepsilon))\)
		belongs to \(\mathcal E_M(\CC)\), then the assignment
		\([w]\mapsto[(F_\varepsilon(w_\varepsilon))]\) is well defined on
		\(\widetilde{\Si}^*\).
			\item If \(F_\varepsilon\) depends on a window of length
	\(L(\varepsilon)\) with
	\(L(\varepsilon)/\log_2(1/\varepsilon)\to\infty\), then in general it
	is not well defined on \(\widetilde{\Si}^*\); its value must be attached
	to a particular representative rather than to the equivalence class.
	\end{enumerate}
\end{proposition}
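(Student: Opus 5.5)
Part (i) is a restatement of the logarithmic-prefix functional result proved just above, and I would reduce it to that. Let \((u_\varepsilon)\sim(v_\varepsilon)\). Definition~\ref{def:equiv} with \(m=2\) gives \(\lcp(u_\varepsilon,v_\varepsilon)\ge 2\log_2(1/\varepsilon)>N_0(\varepsilon)\) for all sufficiently small \(\varepsilon\). If one of the words has length less than \(N_0(\varepsilon)\), this inequality forces the two words to be equal, so the truncation convention \(w[1\ldots N_0]:=w\) causes no trouble. In every case the truncated prefixes coincide, so \(F_\varepsilon(u_\varepsilon)=F_\varepsilon(v_\varepsilon)\) eventually. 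The difference net is therefore eventually zero, hence lies in \(\mathcal N(\CC)\). The moderation hypothesis guarantees that both nets lie in \(\mathcal E_M(\CC)\), so they define the same class in \(\widetilde{\CC}\).

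For part (ii), the claim is a "not in general" statement, so a single explicit counterexample suffices. Take \(L(\varepsilon)=\lfloor\varepsilon^{-1}\rfloor\), which is superlogarithmic, and define \(F_\varepsilon(w)\) to be the indicator that the letter in position \(L(\varepsilon)+1\) of \(w\) equals \(b\), with value \(0\) if \(|w|\le L(\varepsilon)\). This depends only on a window of length \(L(\varepsilon)+1\), and it is bounded, hence moderate on every net. Now use the pair from the earlier remark:
\[
u_\varepsilon=a^{L(\varepsilon)}ba^{L(\varepsilon)},\qquad v_\varepsilon=a^{L(\varepsilon)}ca^{L(\varepsilon)}
\]
if \(|\Si|\ge3\), or \(u_\varepsilon=a^{L}ba^{L}\), \(v_\varepsilon=a^{2L+1}\) over \(\{a,b\}\). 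These nets are equivalent, because their common prefix and suffix have length at least \(L(\varepsilon)\), which dominates every \(m\log_2(1/\varepsilon)\). On the other hand \(F_\varepsilon(u_\varepsilon)=1\) and \(F_\varepsilon(v_\varepsilon)=0\) for all \(\varepsilon\). The constant net \(1\) is not negligible, so the two values differ in \(\widetilde{\CC}\), and the value of \(F\) is attached to the representative rather than to the class.

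The main obstacle is interpretive rather than technical: "in general not well defined" must be read as the existence of such a functional, since a constant \(F_\varepsilon\) trivially depends on any window. I would state this reading explicitly. The second point to check is that the chosen position \(L(\varepsilon)+1\) lies strictly inside the region where equivalent representatives may differ. This holds because equivalence only controls prefixes up to \(m\log_2(1/\varepsilon)\) for fixed \(m\), while \(L(\varepsilon)\) eventually exceeds every such bound.
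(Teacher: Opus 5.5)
Your proposal is correct and follows the paper's proof essentially verbatim. Part (i) uses Definition~\ref{def:equiv} with \(m=2\) to get \(\lcp(u_\varepsilon,v_\varepsilon)>N_0(\varepsilon)\) eventually. Part (ii) uses the same pair \(a^{L}ba^{L}\), \(a^{2L+1}\) with \(L=\lfloor\varepsilon^{-1}\rfloor\) and the indicator of \(b\) at position \(L+1\), which is the paper's position \(M(\varepsilon)+1\). Your additional checks fill in details the paper leaves implicit: the truncation convention is harmless because \(\lcp>N_0(\varepsilon)\) forces short words to coincide, and the constant net \(1\) does not lie in \(\mathcal N(\CC)\).
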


\begin{proof}
	(i) follows from Definition~\ref{def:equiv} with \(m=2\), which gives
	\(\lcp(u_\varepsilon,v_\varepsilon)>N_0(\varepsilon)\) eventually for
	equivalent nets.
	
			(ii) Assume \(|\Sigma|\ge 2\). Let \(M(\varepsilon)=\lfloor\varepsilon^{-1}\rfloor\), and set
		\[
		u_\varepsilon=a^{M(\varepsilon)} b a^{M(\varepsilon)},
		\qquad
		v_\varepsilon=a^{M(\varepsilon)} a a^{M(\varepsilon)}.
		\]
		Then
		\[
		\operatorname{lcp}(u_\varepsilon,v_\varepsilon)=M(\varepsilon),
		\qquad
		\operatorname{lcs}(u_\varepsilon,v_\varepsilon)=M(\varepsilon).
		\]
		Since \(M(\varepsilon)/\log_2(1/\varepsilon)\to\infty\), the two nets
		are equivalent by Definition~\ref{def:equiv}.
		
			Let \(L(\varepsilon)=M(\varepsilon)+1\), and define
	\(F_\varepsilon:\Si^*\to\CC\) as follows: if \(|w|<L(\varepsilon)\), set
	\(F_\varepsilon(w)=0\); if \(|w|\ge L(\varepsilon)\), let
	\(F_\varepsilon(w)=0\) when the letter at position \(L(\varepsilon)\)
	is \(a\), and \(F_\varepsilon(w)=1\) when it is \(b\). Then
	\(F_\varepsilon(u_\varepsilon)=1\) and \(F_\varepsilon(v_\varepsilon)=0\)
	for all sufficiently small \(\varepsilon\).
	 Since
		\(L(\varepsilon)/\log_2(1/\varepsilon)\to\infty\), this proves that
		functionals depending on windows much larger than the logarithmic
		window need not descend to the quotient.
\end{proof}
	
\section{A non-classical element and strict inclusion}
\label{sec:strict}

In their treatise \emph{Infinite Words}~\cite{PerrinPin2004}, Perrin and Pin
stress that the set \(\Si^\infty = \Si^* \cup \Si^\omega\) of finite and
right-infinite words \emph{``is not a monoid''} because the concatenation of two
infinite words is undefined.
They further note that a satisfactory theory of infinite words should include a
genuine monoid structure that extends the free monoid and allows algebraic
manipulations such as concatenation, reversal and the evaluation of
prefix-dependent observables.

In this section we prove that the generalized monoid \(\widetilde{\Si}^*\)
constructed in Section~\ref{sec:prelim} addresses this problem.
We exhibit a concrete net of words that oscillates persistently between two
different periodic patterns and admits no classical limit, yet possesses a
well-defined equivalence class in \(\widetilde{\Si}^*\) and allows the extraction of a renormalized finite value relative to a
specified renormalization functional.

\subsection{A non-convergent oscillating net}

Fix a two-letter alphabet \(\Si = \{\texttt{a},\texttt{b}\}\).
For each \(\varepsilon\in(0,1]\) define
\begin{equation}\label{eq:osc_net}
	w_\varepsilon =
	\begin{cases}
		(\texttt{a}\texttt{b})^{N(\varepsilon)} & \text{if } \lfloor\log_2(1/\varepsilon)\rfloor \text{ is even},\\[2mm]
		(\texttt{b}\texttt{a})^{N(\varepsilon)} & \text{if } \lfloor\log_2(1/\varepsilon)\rfloor \text{ is odd},
	\end{cases}
	\qquad N(\varepsilon) = \lfloor\varepsilon^{-1}\rfloor.
\end{equation}
The net alternates between the two infinite periodic words
\(\texttt{abab}\cdots\) and \(\texttt{baba}\cdots\) on every dyadic scale,
because the parity of the integer part of \(\log_2(1/\varepsilon)\) changes
infinitely often as \(\varepsilon\to0^+\).

The length \(|w_\varepsilon| = 2N(\varepsilon) \le 2\varepsilon^{-1}\) grows
polynomially, therefore \((w_\varepsilon)\in\cE_M(\Si^*)\) is a moderate net
(Definition~\ref{def:moderate}).

\subsection{Failure of classical descriptions}

In the classical frameworks for infinite words, the net~\eqref{eq:osc_net}
is intractable.

\begin{proposition}[Classical intractability]\label{prop:classical_fail}
	\begin{enumerate}
		\item \text{No limit in the Cantor space \((\Si^\omega,d_P)\).}
		Recall that convergence of a net \((w_\varepsilon)\) in the prefix metric
		\(d_P\) means that, for every fixed prefix length \(k\in\NN\), the prefix
		\(w_\varepsilon[1\ldots k]\) eventually stabilises as
		\(\varepsilon\to0^+\).  In particular, if the net converged to some
		infinite word \(u=u_1u_2\cdots\in\Si^\omega\), then for \(k=1\) there
		would exist \(\varepsilon_1>0\) such that
		\[
		w_\varepsilon[1]=u_1
		\qquad\text{for all }0<\varepsilon<\varepsilon_1.
		\]
		We now exhibit two sequences of parameters, both tending to \(0\),
		for which the first letter of \(w_\varepsilon\) is different.
		
		Set \(P(\varepsilon)=\lfloor\log_2(1/\varepsilon)\rfloor\).  By
		definition~\eqref{eq:osc_net},
		\[
		w_\varepsilon =
		\begin{cases}
			(ab)^{N(\varepsilon)}, & P(\varepsilon)\text{ even},\\[2mm]
			(ba)^{N(\varepsilon)}, & P(\varepsilon)\text{ odd}.
		\end{cases}
		\]
		Thus the first letter of \(w_\varepsilon\) is \(\texttt{a}\) when
		\(P(\varepsilon)\) is even and \(\texttt{b}\) when \(P(\varepsilon)\) is
		odd.
		
		Now take
		\[
		\varepsilon_j = 2^{-2j},\qquad \varepsilon'_j = 2^{-(2j+1)},\qquad j\in\NN.
		\]
		For the first sequence we have
		\(P(\varepsilon_j)=\lfloor 2j\rfloor=2j\), which is even, so
		\(w_{\varepsilon_j}\) begins with \(\texttt{a}\).  For the second
		sequence,
		\(P(\varepsilon'_j)=\lfloor 2j+1\rfloor=2j+1\), which is odd, so
		\(w_{\varepsilon'_j}\) begins with \(\texttt{b}\).
		
		As \(j\to\infty\), both \(\varepsilon_j\) and \(\varepsilon'_j\) tend
		to \(0\).  Hence there are arbitrarily small parameters for which the
		first letter is \(\texttt{a}\), and arbitrarily small parameters for
		which the first letter is \(\texttt{b}\).  Consequently the first
		letter cannot stabilise to any fixed \(u_1\in\Si\), and the net cannot
		converge in \((\Si^\omega,d_P)\).
		
		Equivalently, for every \(j\) one has
		\(\lcp(w_{\varepsilon_j},w_{\varepsilon'_j})=0\), and therefore
		\(d_P(w_{\varepsilon_j},w_{\varepsilon'_j})=1\).  Thus the net is not
		even Cauchy in the prefix metric, which again rules out convergence.
		
		\item \text{Concatenation is undefined in \(\Si^\infty\).}
		The set \(\Si^\infty\) is not a monoid; the product of two
		\(\omega\)-words has no meaning, so algebraic combinations of
		\(w_\varepsilon\) with another infinite word cannot be formed.
		
\item \text{Classical evaluation of the first-letter observable diverges.}
This is a particular prefix-dependent functional; other such functionals may behave differently.
Let \(F_\varepsilon:\Si^*\to\CC\) be defined by
\[
F_\varepsilon(w) =
\begin{cases}
	1, & \text{if the first letter of } w \text{ is } \texttt{a},\\[2mm]
	0, & \text{if the first letter of } w \text{ is } \texttt{b}.
\end{cases}
\]
This is a well-defined prefix-dependent functional for every
\(\varepsilon>0\).  On the net \(w_\varepsilon\), we have
\[
F_\varepsilon(w_\varepsilon) =
\begin{cases}
	1, & \text{if } \lfloor\log_2(1/\varepsilon)\rfloor \text{ is even},\\[2mm]
	0, & \text{if } \lfloor\log_2(1/\varepsilon)\rfloor \text{ is odd}.
\end{cases}
\]
Since \(\lfloor\log_2(1/\varepsilon)\rfloor\) changes parity infinitely often
as \(\varepsilon\to0^+\), the net of complex numbers
\((F_\varepsilon(w_\varepsilon))_{\varepsilon}\) takes the two distinct values
\(0\) and \(1\) on arbitrarily small scales.  Hence it cannot converge
in \(\CC\).  This shows that even the simplest observable depending only
on a logarithmic prefix fails to admit a classical limit.
	\end{enumerate}
\end{proposition}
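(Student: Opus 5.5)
The three items are independent, so I will handle them one at a time. For all of them I work directly with the explicit formula~\eqref{eq:osc_net} and with the index $P(\varepsilon)=\lfloor\log_2(1/\varepsilon)\rfloor$.

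For item~1 (no limit in the Cantor space), I argue by contradiction. Suppose the net converges in $(\Si^\omega,d_P)$ to some $u=u_1u_2\cdots$. Convergence in the prefix metric means that $d_P(w_\varepsilon,u)<1$ eventually. Hence $\lcp(w_\varepsilon,u)\ge1$ eventually, so the first letter $w_\varepsilon[1]$ is eventually constant and equal to $u_1$. I then exhibit the two dyadic sequences $\varepsilon_j=2^{-2j}$ and $\varepsilon'_j=2^{-(2j+1)}$. On these, $P$ takes the values $2j$ and $2j+1$, so by~\eqref{eq:osc_net} the first letters are $\texttt a$ and $\texttt b$ respectively. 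Both sequences tend to $0$, which contradicts the eventual constancy. As a supplementary remark, $\lcp(w_{\varepsilon_j},w_{\varepsilon'_j})=0$, so the net is not even Cauchy for $d_P$. This rules out convergence to any point, including a finite word viewed in $\Si^\infty$.

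For item~2 (concatenation undefined), no real argument is needed beyond recalling the structure of $\Si^\infty$. Concatenation $xy$ is defined there only when $x\in\Si^*$. So any product whose left factor is an $\omega$-word, such as a putative limit of $(w_\varepsilon)$ multiplied by another infinite word, has no meaning. Combined with item~1, there is not even a candidate classical limit object to multiply. I would state this citing \cite{PerrinPin2004}.

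For item~3 (divergence of the first-letter observable), I first check that $F_\varepsilon$ depends only on the prefix of length $1\le N_0(\varepsilon)$ for $\varepsilon\le1/2$. So $F_\varepsilon$ is a logarithmic-prefix functional. It is bounded, hence moderate, and therefore it descends to $\widetilde{\Si}^*$ by the invariance criterion, Proposition~\ref{prop:invariance_window}(i). The empty word needs a convention, but it is irrelevant here since every $w_\varepsilon$ is nonempty. On the two sequences from item~1, the net $F_\varepsilon(w_\varepsilon)$ takes the values $1$ and $0$ respectively. Two subsequences with different limits preclude convergence in $\CC$.

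The only step requiring any care is item~1. There I must be precise about what net convergence in $d_P$ means, namely eventual agreement of each fixed-length prefix, and then reduce it to the first letter. Everything else is direct substitution into~\eqref{eq:osc_net}.
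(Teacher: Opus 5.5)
Your proposal is correct and follows essentially the same argument as the paper. The dyadic sequences $\varepsilon_j=2^{-2j}$ and $\varepsilon'_j=2^{-(2j+1)}$ force first letters $\texttt{a}$ and $\texttt{b}$, which rules out convergence and even the Cauchy property for $d_P$; item~2 is the Perrin--Pin observation; and the same two sequences give $F_\varepsilon(w_\varepsilon)$ the values $1$ and $0$. Your extra remark that $F_\varepsilon$ descends to $\widetilde{\Si}^*$ via Proposition~\ref{prop:invariance_window}(i) is not needed for the statement, but it is harmless given your caveats about $\varepsilon\le 1/2$ and the empty word.
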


Thus, within the classical theory, the net cannot be assigned a limiting word,
cannot be concatenated with other infinite words, and does not permit a unique
evaluation of simple prefix-dependent observables.

\subsection{Well-posedness in the generalized monoid}

For a concrete illustration, consider the multiplicative mould
\[
M_s(w) = \prod_{k=1}^{|w|} (s + w_k)^{-1},\qquad w_k\in\{\texttt{a},\texttt{b}\},
\]
where \(s>0\) and the letters are identified with positive real
numbers. We fix explicit numerical values \(\texttt{a},\texttt{b}>0\);
for instance, \(\texttt{a}=1\), \(\texttt{b}=2\), or any other pair of
positive reals.
Each factor of this mould is bounded, since
\((s+w_k)^{-1}\le (s+\min\{\texttt{a},\texttt{b}\})^{-1}\).
Therefore the product over \(N_0(\varepsilon)\) letters is moderate;
it is not necessarily uniformly bounded in \(\varepsilon\), but it is
bounded by \(\varepsilon^{-\log_2 A}\) up to a constant, where
\(A=\max\{(s+\texttt{a})^{-1},(s+\texttt{b})^{-1}\}\).

Define the net of complex
numbers
\[
z_\varepsilon := M_s\bigl(w_\varepsilon[1\ldots N_0(\varepsilon)]\bigr).
\]

\begin{theorem}[Generalized evaluation]\label{thm:well_def_open}
	The net \((z_\varepsilon)_{\varepsilon\in(0,1]}\) depends only on the class
	\(\mathbf{w}=[(w_\varepsilon)]\), not on the particular representative.
	Hence the generalized mould evaluation
	\[
	\widetilde{M}_s(\mathbf{w}) := [(z_\varepsilon)] \in \widetilde{\CC}
	\]
	is a well-defined generalized complex number.
\end{theorem}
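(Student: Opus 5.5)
The plan is to realise \(\widetilde{M}_s\) as an instance of the logarithmic-prefix functionals introduced just before Proposition~\ref{prop:invariance_window}. I define \(F_\varepsilon:\Si^*\to\CC\) by \(F_\varepsilon(u):=M_s\bigl(u[1\ldots N_0(\varepsilon)]\bigr)\), with the convention \(u[1\ldots N_0(\varepsilon)]:=u\) when \(|u|<N_0(\varepsilon)\). Then \(z_\varepsilon=F_\varepsilon(w_\varepsilon)\). Once I have checked conditions (i) and (ii) of that construction, Proposition~\ref{prop:invariance_window}(i) gives the theorem directly. For clarity I would still spell out the key step.

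Condition (i) holds by construction: \(F_\varepsilon(u)\) reads only the first \(N_0(\varepsilon)\) letters of \(u\). Now let \((u_\varepsilon)\sim(w_\varepsilon)\). Applying Definition~\ref{def:equiv} with \(m=2\) gives
\[
\lcp(u_\varepsilon,w_\varepsilon)\ge 2\log_2(1/\varepsilon)>N_0(\varepsilon)
\]
for all sufficiently small \(\varepsilon\). The prefix of length \(N_0(\varepsilon)\) of \(u_\varepsilon\) then coincides with that of \(w_\varepsilon\), and I also need both words to have length at least \(N_0(\varepsilon)\) so that the truncation convention does not intervene. This holds because an \(\lcp\) of length \(>N_0(\varepsilon)\) forces \(|u_\varepsilon|,|w_\varepsilon|>N_0(\varepsilon)\), or \(u_\varepsilon=w_\varepsilon\). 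Hence \(F_\varepsilon(u_\varepsilon)=F_\varepsilon(w_\varepsilon)=z_\varepsilon\) eventually. The difference of the two nets is eventually \(0\), so it lies in \(\cN(\CC)\), and the classes in \(\widetilde{\CC}\) coincide. The behaviour for large \(\varepsilon\) is irrelevant.

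The remaining step, and the only one with real content, is condition (ii): moderateness of \((F_\varepsilon(v_\varepsilon))\) for every moderate net \((v_\varepsilon)\), and in particular of \((z_\varepsilon)\). Each factor satisfies \(0<(s+w_k)^{-1}\le A:=\max\{(s+\texttt a)^{-1},(s+\texttt b)^{-1}\}\). The product has at most \(N_0(\varepsilon)\le\log_2(1/\varepsilon)\) factors, so
\[
|F_\varepsilon(v)|\le \max\{1,A\}^{\log_2(1/\varepsilon)}=\varepsilon^{-\max\{0,\log_2 A\}}.
\]
This is a polynomial bound in \(\varepsilon^{-1}\), uniform in \(v\), so \((F_\varepsilon(v_\varepsilon))\in\mathcal E_M(\CC)\). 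The point to watch is that when \(A>1\) the bound really grows. It remains polynomial only because the number of factors is logarithmic; this is exactly where the choice of the window \(N_0\) is used, and a window of superlogarithmic length would break moderateness as well as invariance.
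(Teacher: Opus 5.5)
Your proof is correct and follows the paper's argument. It bounds each factor by $A$ to get moderateness of $(z_\varepsilon)$, and uses the $m=2$ case of Definition~\ref{def:equiv} to force $\lcp>N_0(\varepsilon)$, so the logarithmic prefixes, and hence the $M_s$-values, of two representatives eventually coincide. Framing this through the general logarithmic-prefix criterion, checking the truncation convention, and using $\max\{1,A\}$ to get a bound uniform over all moderate inputs are small refinements of points the paper passes over quickly; they do not change the approach.
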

\begin{proof}
	First note that \((z_\varepsilon)\in\mathcal E_M(\CC)\). Indeed,
	\(|w_\varepsilon[1\ldots N_0(\varepsilon)]|\le N_0(\varepsilon)\le
	\log_2(1/\varepsilon)+1\), and each factor in \(M_s\) is bounded. Hence
	for \(A=\max\{(s+a)^{-1},(s+b)^{-1}\}\),
	\[
	|z_\varepsilon|\le A^{N_0(\varepsilon)}
	\le C\varepsilon^{-\log_2 A}
	\]
	for a suitable constant \(C>0\), which is a polynomial bound.
	
	Let \((u_\varepsilon)\sim(v_\varepsilon)\) be two representatives of
	\(\mathbf{w}\).  By Definition~\ref{def:equiv}, for every \(m\) there
	exists \(\varepsilon_m>0\) such that
	\[
	\min\{\lcp(u_\varepsilon,v_\varepsilon),\lcs(u_\varepsilon,v_\varepsilon)\}
	\ge m\log_2(1/\varepsilon)
	\qquad(\varepsilon<\varepsilon_m).
	\]
	Take \(m=2\) and let \(\varepsilon_2\) be the corresponding threshold.
	For \(0<\varepsilon<\varepsilon_2\) we have
	\(\lcp(u_\varepsilon,v_\varepsilon)\ge 2\log_2(1/\varepsilon)\).
	Now, if \(\varepsilon<\min(\varepsilon_2,1/2)\), then
	\[
	\log_2(1/\varepsilon) > 1,
	\]
	and therefore
	\[
	2\log_2(1/\varepsilon) > \log_2(1/\varepsilon)+1
	\ge \lfloor\log_2(1/\varepsilon)\rfloor+1
	= N_0(\varepsilon)+1.
	\]
	Consequently, \(\lcp(u_\varepsilon,v_\varepsilon) \ge 2\log_2(1/\varepsilon)
	> N_0(\varepsilon)+1\), so in particular \(\lcp(u_\varepsilon,v_\varepsilon)
	\ge N_0(\varepsilon)+1\).  This means that the first
	\(N_0(\varepsilon)\) symbols of \(u_\varepsilon\) and \(v_\varepsilon\) are
	identical.
	
	Therefore, for all sufficiently small \(\varepsilon\),
	\[
	M_s\bigl(u_\varepsilon[1\ldots N_0(\varepsilon)]\bigr)
	= M_s\bigl(v_\varepsilon[1\ldots N_0(\varepsilon)]\bigr).
	\]
	The two nets \((z_\varepsilon)\) obtained from \(u_\varepsilon\) and
	\(v_\varepsilon\) are eventually equal.  Such nets are certainly
	equivalent in the Colombeau algebra \(\widetilde{\CC}\) (their difference
	is identically zero for all small \(\varepsilon\)), and hence
	\(\widetilde{M}_s(\mathbf{w})\) does not depend on the chosen
	representative.
\end{proof}

\subsection{Renormalization: extracting a finite value}

The generalized number \(\widetilde{M}_s(\mathbf{w})\) does not yet give a
single finite complex number; it is an equivalence class of oscillating nets.
To extract a finite value we apply a renormalization functional.

\begin{definition}[Logarithmic Cesàro mean]\label{def:LC}
	For a net \((z_\varepsilon)\) of non-zero complex numbers (in particular,
	for multiplicative moulds whose factors are nonzero on the relevant
	prefixes), set
	\[
	\Phi_{\mathrm{LC}}(z) := \lim_{T\to\infty}
	\frac{1}{T} \int_{1}^{T} \frac{\log|z_{2^{-t}}|}{t}\,dt,
	\]
	whenever the limit exists.
\end{definition}

\begin{definition}[Additive logarithmic Cesàro mean]\label{def:ALC}
	For a net \((a_\varepsilon)_{\varepsilon\in(0,1]}\) of complex numbers,
	set
	\[
	\Psi_{\mathrm{LC}}(a) := \lim_{T\to\infty}
	\frac{1}{T} \int_{1}^{T} \frac{a_{2^{-t}}}{t}\,dt,
	\]
	whenever the limit exists.
\end{definition}

\begin{remark}[Domain of the functional]\label{rem:PhiLC_domain}
	The functional \(\Phi_{\mathrm{LC}}\) is not defined on arbitrary
	classes of \(\widetilde{\CC}\). It is applied only to concrete nets
	arising from logarithmic-prefix moulds with nonzero factors. The
	representative-independence is proved for those specific nets, not as a
	general property of the quotient. In particular, the existence of the
	logarithmic Cesàro mean in Theorem~\ref{thm:renorm_value} is
	established ad hoc for the oscillating net \eqref{eq:osc_net}, not
	for all nets of that form.
\end{remark}

The change of variables \(t=\log_2(1/\varepsilon)\) converts the logarithmic
observation scale \(N_0(\varepsilon)\sim t\) into a linear time parameter.
The Cesàro mean then averages the logarithmic growth on that scale.

Recall that \(s>0\) is the fixed parameter in the mould
\(M_s(w)=\prod_{k=1}^{|w|}(s+w_k)^{-1}\), and that the letters
\(\texttt a,\texttt b\) are identified with fixed positive real numbers.
Then the value in Theorem~\ref{thm:renorm_value} depends on \(s\),
\(\texttt a\), and \(\texttt b\).

\begin{theorem}[Renormalized value]\label{thm:renorm_value}
	For the class \(\mathbf{w}=[(w_\varepsilon)]\) defined by~\eqref{eq:osc_net},
	\[
	\Phi_{\mathrm{LC}}(z) = -\frac12\log\bigl((s+\texttt{a})(s+\texttt{b})\bigr).
	\]
\end{theorem}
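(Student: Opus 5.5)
The plan is to compute \(\log|z_{2^{-t}}|\) explicitly as a function of \(t\), up to a uniformly bounded error, and then evaluate the Cesàro-type average directly. Write \(\alpha=\log(s+\texttt a)\) and \(\beta=\log(s+\texttt b)\). Both are finite real numbers because \(s,\texttt a,\texttt b>0\). Every factor of \(M_s\) is nonzero, so \(z_\varepsilon\neq0\) and \(\Phi_{\mathrm{LC}}\) is applicable in the sense of Definition~\ref{def:LC} and Remark~\ref{rem:PhiLC_domain}.

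\emph{Step 1: the prefix at scale \(\varepsilon=2^{-t}\).} For \(t\ge1\) put \(\varepsilon=2^{-t}\). Then \(N_0(\varepsilon)=\lfloor t\rfloor=:n\), and the parity deciding the branch in \eqref{eq:osc_net} is the parity of \(n\) itself. Since \(N(\varepsilon)=\lfloor 2^{t}\rfloor\ge t\ge n\), we have \(|w_\varepsilon|=2N(\varepsilon)\ge n\). Hence \(w_\varepsilon[1\ldots n]\) is a genuine prefix of length \(n\) and the truncation convention never intervenes. This prefix is an alternating word in \(\texttt a,\texttt b\), beginning with \(\texttt a\) or \(\texttt b\) according to the parity of \(n\). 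In either case it contains \(\lfloor n/2\rfloor\) or \(\lceil n/2\rceil\) copies of each letter. Therefore
\[
\log|z_{2^{-t}}|=-\sum_{k=1}^{n}\log(s+w_k)=-\tfrac{n}{2}(\alpha+\beta)+r(t),\qquad |r(t)|\le \tfrac12(|\alpha|+|\beta|).
\]
The key point is that the oscillation between the two patterns only changes which letter is counted once more. This affects \(\log|z|\) by a bounded amount and does not affect the leading term.

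\emph{Step 2: averaging.} Substituting this into Definition~\ref{def:LC} gives
\[
\frac1T\int_1^T\frac{\log|z_{2^{-t}}|}{t}\,dt
=-\frac{\alpha+\beta}{2}\cdot\frac1T\int_1^T\frac{\lfloor t\rfloor}{t}\,dt+\frac1T\int_1^T\frac{r(t)}{t}\,dt .
\]
The last term is bounded in absolute value by \(\tfrac12(|\alpha|+|\beta|)\,\frac{\log T}{T}\to0\). For the main term, write \(\lfloor t\rfloor/t=1-\{t\}/t\) with \(0\le\{t\}<1\). Then \(\int_1^T\{t\}/t\,dt\le\log T\), so \(\frac1T\int_1^T\lfloor t\rfloor/t\,dt=\frac{T-1}{T}+O(\log T/T)\to1\). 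Consequently the limit exists and equals
\[
-\tfrac12(\alpha+\beta)=-\tfrac12\log\bigl((s+\texttt a)(s+\texttt b)\bigr),
\]
which is the claimed value. Measurability of the integrand is not an issue, since \(\log|z_{2^{-t}}|\) is piecewise constant on the intervals on which \(\lfloor t\rfloor\) is constant.

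\emph{Main obstacle.} The only delicate point is Step 1. One must check that the branch choice in \eqref{eq:osc_net} and the window length are both governed by the same integer \(\lfloor t\rfloor\). One must also check that the letter imbalance of the alternating prefix is at most one, uniformly in \(t\). Once the \(O(1)\) remainder is established, the averaging in Step 2 is elementary. Representative-independence is not needed beyond Theorem~\ref{thm:well_def_open}, since the computation is carried out on the representative \eqref{eq:osc_net}. By that theorem, any equivalent representative yields a net \((z_\varepsilon)\) that is eventually equal to this one, and such nets have the same limit.
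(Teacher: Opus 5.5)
Your proposal is correct and follows essentially the same route as the paper. Both arguments use the letter balance of the alternating prefix to write $\log|z_{2^{-t}}|=-\tfrac{\lfloor t\rfloor}{2}\log\bigl((s+\texttt a)(s+\texttt b)\bigr)+O(1)$, and both observe that the resulting $O(t^{-1})$ error contributes only $O(T^{-1}\log T)$ to the Ces\`aro average. Your additional check that $|w_{2^{-t}}|=2\lfloor 2^t\rfloor\ge\lfloor t\rfloor$ is a detail the paper leaves implicit; it ensures the truncation convention never intervenes.
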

\begin{proof}
	Put \(t=\log_2(1/\varepsilon)\), so that \(\varepsilon=2^{-t}\) and
	\(N_0(2^{-t})=\lfloor t\rfloor\).  Let
	\[
	L:=\lfloor t\rfloor .
	\]
	The prefix of \(w_{2^{-t}}\) of length \(L\) is alternating, hence it
	contains either \(L/2\) copies of \(\texttt{a}\) and \(L/2\) copies of
	\(\texttt{b}\) (when \(L\) is even), or these two numbers differ
	by exactly \(1\) (when \(L\) is odd).  Thus
	\[
	\log|z_{2^{-t}}|
	=
	-\frac{L}{2}\log\bigl((s+\texttt{a})(s+\texttt{b})\bigr)
	+ R_L,
	\]
	where \(R_L=0\) if \(L\) is even and
	\(R_L=\pm\frac12\log\frac{s+\texttt{a}}{s+\texttt{b}}\) if \(L\) is
	odd.  In all cases \(|R_L|\) is bounded by a constant independent of
	\(t\).
	
	Since \(L = \lfloor t\rfloor = t + O(1)\), we obtain
	\[
	\frac{\log|z_{2^{-t}}|}{t}
	=
	-\frac12\log\bigl((s+\texttt{a})(s+\texttt{b})\bigr)
	+ O(t^{-1}).
	\]
	Therefore
	\[
	\Phi_{\mathrm{LC}}(z)
	=
	\lim_{T\to\infty}
	\frac{1}{T}\int_1^T
	\frac{\log|z_{2^{-t}}|}{t}\,dt
	=
	-\frac12\log\bigl((s+\texttt{a})(s+\texttt{b})\bigr),
	\]
	because the integral of the error term contributes
	\(O(T^{-1}\log T)\to0\).
\end{proof}

\begin{example}[Additive mould on the oscillating net]\label{ex:additive_mould}
	Let \(h:\Si\to\CC\) be any complex-valued function on the alphabet, and
	define the additive functional
	\[
	\Lambda_\varepsilon(w)
	=
	\sum_{j=1}^{N_0(\varepsilon)} h(w_j),
	\]
	where \(N_0(\varepsilon)=\lfloor\log_2(1/\varepsilon)\rfloor\) and
	\(w=w_1\cdots w_{|w|}\).
	By the same prefix-invariance argument used in
	Theorem~\ref{thm:well_def_open}, the assignment
	\[
	[w]\longmapsto \bigl[(\Lambda_\varepsilon(w_\varepsilon))\bigr]
	\]
	is well defined in \(\widetilde{\CC}\).
	
	For the oscillating net \eqref{eq:osc_net}, the prefix of length
	\(N_0(\varepsilon)\) alternates between \(\texttt a\) and
	\(\texttt b\). Hence it contains
	\(\frac12 N_0(\varepsilon)+O(1)\) occurrences of each letter, so
	\[
	\Lambda_\varepsilon(w_\varepsilon)
	=
	\frac{N_0(\varepsilon)}{2}
	\bigl(h(\texttt a)+h(\texttt b)\bigr)
	+
	O(1).
	\]
	Therefore, on the logarithmic scale \(t=\log_2(1/\varepsilon)\),
	\[
	\frac{\Lambda_{2^{-t}}(w_{2^{-t}})}{t}
	=
	\frac12\bigl(h(\texttt a)+h(\texttt b)\bigr)
	+
	O(t^{-1}),
	\]
	and the logarithmic Ces\`aro mean gives the finite value
	\[
	\lim_{T\to\infty}
	\frac1T\int_1^T
	\frac{\Lambda_{2^{-t}}(w_{2^{-t}})}{t}\,dt
	=
	\frac12\bigl(h(\texttt a)+h(\texttt b)\bigr).
	\]
	This is the additive analogue of Theorem~\ref{thm:renorm_value}.
\end{example}

\begin{remark}[Scale-dependent renormalization]\label{rem:scale_renorm}
	The logarithmic window
	\(N_0(\varepsilon)=\lfloor\log_2(1/\varepsilon)\rfloor\) is a
	modeling choice, not an intrinsic feature of the oscillating word.
	If it is replaced by a polynomial window
	\[
	N_\alpha(\varepsilon)=\lfloor \varepsilon^{-\alpha}\rfloor,
	\qquad \alpha>0,
	\]
	then the corresponding equivalence relation \(\sim\) should also be
	adapted to the scale \(\varepsilon^{-\alpha}\). For the oscillating
	net \eqref{eq:osc_net}, the prefix of length \(N_\alpha(\varepsilon)\)
	remains alternating and therefore contains
	\(\frac12 N_\alpha(\varepsilon)+O(1)\) occurrences of each letter.
	Hence
	\[
	\log\left|M_s\bigl(w_\varepsilon[1\ldots N_\alpha(\varepsilon)]\bigr)\right|
	=
	-\frac{N_\alpha(\varepsilon)}{2}
	\log\bigl((s+\texttt a)(s+\texttt b)\bigr)
	+
	O(1).
	\]
	
	With the logarithmic time \(t=\log_2(1/\varepsilon)\), the quantity
	\(N_\alpha(2^{-t})=\lfloor 2^{\alpha t}\rfloor\) grows exponentially,
	so the logarithmic Cesàro mean of
	Definition~\ref{def:LC} would diverge. However, if the
	renormalization variable is adapted to the polynomial scale, say
	\[
	u=\varepsilon^{-\alpha},
	\]
	then \(\varepsilon=u^{-1/\alpha}\) and
	\[
	\frac{\log\left|z_{u^{-1/\alpha}}\right|}{u}
	\longrightarrow
	-\frac12\log\bigl((s+\texttt a)(s+\texttt b)\bigr)
	\qquad(u\to\infty).
	\]
If \(0<\alpha\le1\), the polynomial window grows no faster than the
length \(\lfloor\varepsilon^{-1}\rfloor\) of the oscillating net, and
the corresponding polynomial Cesàro mean still gives the same value
as Theorem~\ref{thm:renorm_value}. If \(\alpha>1\), the window
exceeds the length of the net eventually, and the limit is \(0\), not
the value above.
\end{remark}

Thus, the oscillating net gives rise to a unique finite value after
renormalization, even though the mould itself may converge to \(0\)
for suitable choices of \(s\) and the letter values, while the
first-letter observable has no pointwise limit.

\begin{remark}[Interpretation of the renormalized value]\label{rem:interp_value}
	The finite value obtained from the logarithmic Cesàro mean is not an
	intrinsic invariant of the class \(\mathbf{w}\in\widetilde{\Si}^*\); it
	depends on the chosen mould \(M_s\), the numerical encoding of the
	letters, the logarithmic cutoff \(N_0(\varepsilon)\), and the
	renormalization functional \(\Phi_{\mathrm{LC}}\).  It is a
	renormalized observable associated with those choices.  It is not
	merely a conventional number attached to the oscillating net; it
	encodes the \emph{average logarithmic rate} of the mould evaluation.
Indeed, if \(s\) is chosen so that each factor is strictly less than
\(1\), then the net \(z_\varepsilon\) converges pointwise to \(0\).
More generally, the quantity
\[
\frac{\log|z_{2^{-t}}|}{t}
\]
describes the logarithmic growth rate of \(z_\varepsilon\) on the
logarithmic time scale \(t=\log_2(1/\varepsilon)\), regardless of whether
the net converges to \(0\).  For the oscillating word
	\(\mathbf{w}\), the two possible alternating patterns give the same
	leading logarithmic behaviour, namely
	\[
	-\frac{t}{2}\log\bigl((s+a)(s+b)\bigr)+O(1),
	\]
	and the Cesàro average extracts precisely the coefficient
	\(-\frac12\log\bigl((s+a)(s+b)\bigr)\).  In other words, in the
	logarithmic scale the oscillating word behaves, on average, as a
	constant word whose letter is the geometric mean
	\(\sqrt{(s+a)(s+b)}\).
	
	This illustrates the role of the generalized monoid: even when a
	classical pointwise limit is trivial or does not exist, the quotient
	\(\widetilde{\Si}^*\) plus a scale-adapted renormalization produces a
	finite observable that is independent of the representative.
\end{remark}

\subsection{Right absorption for logarithmic observables}
\label{sec:right_absorption}

The evaluation functional \(M_s\) depends only on the logarithmic prefix
\(N_0(\varepsilon)\).  Consequently, if a generalized word is already long
enough on the logarithmic scale, multiplying it on the right by another
generalized word does not affect the value of the mould.  This is a
manifestation of the asymmetric role of concatenation with respect to
prefix-based observables.

\begin{proposition}[Right absorption for logarithmically long words]
	\label{prop:right_absorption}
	Let \(\mathbf u,\mathbf v\in\widetilde{\Si}^*\).  Suppose that
	\(\mathbf u\) admits a representative \((u_\varepsilon)\) such that
	\(|u_\varepsilon|\ge N_0(\varepsilon)\) for all sufficiently small
	\(\varepsilon\).  Then for any representative \((v_\varepsilon)\)
	of \(\mathbf v\),
	\[
	\widetilde{M}_s(\mathbf u\cdot\mathbf v) = \widetilde{M}_s(\mathbf u)
	\]
	in the Colombeau algebra \(\widetilde{\CC}\).
	
	This is an absorption property of the truncated functional
	\(\widetilde M_s\), not an equality \(\mathbf u\cdot\mathbf v=\mathbf u\)
	in the monoid.
\end{proposition}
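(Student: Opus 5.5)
The plan is to compute both sides on explicit representatives and to show that the two nets of complex numbers agree for all sufficiently small \(\varepsilon\). The map \(\widetilde M_s\) is the logarithmic-prefix functional \(F_\varepsilon(w)=M_s(w[1\ldots N_0(\varepsilon)])\). By Theorem~\ref{thm:well_def_open}, or more generally by the invariance criterion of Proposition~\ref{prop:invariance_window}(i), this map is well defined on classes. We may therefore evaluate it on any representatives we like.

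We take the representative \((u_\varepsilon)\) of \(\mathbf u\) supplied by the hypothesis and an arbitrary representative \((v_\varepsilon)\) of \(\mathbf v\). By Definition~\ref{def:genmonoid}, the net \((u_\varepsilon v_\varepsilon)\) is a representative of \(\mathbf u\cdot\mathbf v\), and well-definedness of the product comes from Theorem~\ref{thm:monoid_well_def}. Hence
\[
\widetilde M_s(\mathbf u\cdot\mathbf v)=\bigl[\bigl(M_s((u_\varepsilon v_\varepsilon)[1\ldots N_0(\varepsilon)])\bigr)\bigr],
\qquad
\widetilde M_s(\mathbf u)=\bigl[\bigl(M_s(u_\varepsilon[1\ldots N_0(\varepsilon)])\bigr)\bigr].
\]

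The key step is the combinatorial identity
\[
(u_\varepsilon v_\varepsilon)[1\ldots N_0(\varepsilon)]=u_\varepsilon[1\ldots N_0(\varepsilon)]
\]
whenever \(|u_\varepsilon|\ge N_0(\varepsilon)\). This holds because the first \(|u_\varepsilon|\) letters of \(u_\varepsilon v_\varepsilon\) are exactly the letters of \(u_\varepsilon\), and \(N_0(\varepsilon)\le|u_\varepsilon|\). In the boundary case \(|u_\varepsilon|=N_0(\varepsilon)\), the truncation convention \(w[1\ldots N]:=w\) for \(N>|w|\) is never triggered on the left-hand side, since \(|u_\varepsilon v_\varepsilon|\ge N_0(\varepsilon)\). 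Nor is it triggered on the right-hand side. So the identity holds in every case once \(\varepsilon\) is below the threshold in the hypothesis. Consequently the two complex nets coincide for all sufficiently small \(\varepsilon\).

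It remains to check moderateness. Both nets are moderate by the bound \(|z_\varepsilon|\le C\varepsilon^{-\log_2 A}\) established in the proof of Theorem~\ref{thm:well_def_open}, and that bound applies to any word. Their difference is eventually zero, hence lies in \(\cN(\CC)\), so the two classes agree in \(\widetilde{\CC}\).

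There is no real obstacle here; the only point requiring care is the use of representatives. The hypothesis supplies one particular representative of \(\mathbf u\), so we must invoke well-definedness of both the product and \(\widetilde M_s\) to pass from that representative to the classes. Finally, we add a remark that \(\mathbf u\cdot\mathbf v\ne\mathbf u\) in general: for example, \(\mathbf u=[a^{\lfloor\varepsilon^{-1}\rfloor}]\) and \(\mathbf v=[b]\) give classes that differ in the suffix. This shows the statement concerns only the truncated functional.
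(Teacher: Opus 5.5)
Your proof is correct and is the paper's argument: take the hypothesised representative \((u_\varepsilon)\), note that \((u_\varepsilon v_\varepsilon)\) represents \(\mathbf u\cdot\mathbf v\), observe that the first \(N_0(\varepsilon)\) letters of \(u_\varepsilon v_\varepsilon\) are those of \(u_\varepsilon\) once \(|u_\varepsilon|\ge N_0(\varepsilon)\), and conclude that the two complex nets coincide eventually. One caveat on your moderateness step: the bound \(|z_\varepsilon|\le C\varepsilon^{-\log_2 A}\) does not hold for arbitrary words when \(A<1\), since words shorter than \(N_0(\varepsilon)\) can give \(|z_\varepsilon|\) as large as \(1\); here, however, both words have length at least \(N_0(\varepsilon)\) eventually, so the bound is valid, and in general \(\max\{A,1\}^{N_0(\varepsilon)}\) gives moderateness.
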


\begin{proof}
	By definition, \(\mathbf{u}\cdot\mathbf{v}\) is represented by the net
	\((u_\varepsilon v_\varepsilon)\).  For \(\varepsilon\) small enough so
	that \(|u_\varepsilon| \ge N_0(\varepsilon)\), the first \(N_0(\varepsilon)\)
	letters of \(u_\varepsilon v_\varepsilon\) are exactly the first
	\(N_0(\varepsilon)\) letters of \(u_\varepsilon\), because the prefix
	does not reach the right factor \(v_\varepsilon\).  Hence
	\[
	M_s\bigl((u_\varepsilon v_\varepsilon)[1\ldots N_0(\varepsilon)]\bigr)
	= M_s\bigl(u_\varepsilon[1\ldots N_0(\varepsilon)]\bigr).
	\]
	Since the defining nets coincide eventually, the corresponding classes
	are equal in \(\widetilde{\CC}\).
\end{proof}

\begin{remark}
	The hypothesis of Proposition~\ref{prop:right_absorption} is not
	satisfied by every element of \(\widetilde{\Si}^*\).  In particular,
	if \(\mathbf u=\iota(w)\) is the class of a non-empty finite word
	\(w\in\Si^*\), then every representative has bounded length, while
	\(N_0(\varepsilon)\to\infty\).  Hence no representative satisfies
	\(|u_\varepsilon|\ge N_0(\varepsilon)\) for all sufficiently small
	\(\varepsilon\), and the proposition does not apply to such classes.
	It should therefore be read as a property of generalized words whose
	length already dominates the logarithmic window, not as a universal
	absorption law.
\end{remark}

The reversal involution of \(\widetilde{\Si}^*\) immediately yields a dual
left-absorption property for moulds that read words from right to left.
Define \(M_s^\vee(w) := M_s(w^\vee)\).  Then
\[
\widetilde{M_s^\vee}(\mathbf{u}\cdot\mathbf{v})
= \widetilde{M}_s\bigl((\mathbf{u}\cdot\mathbf{v})^\vee\bigr)
= \widetilde{M}_s(\mathbf{v}^\vee\cdot\mathbf{u}^\vee).
\]
If \(\mathbf{v}\) is sufficiently long on the logarithmic scale, the same
argument as above applied to \(\mathbf{v}^\vee\) gives
\[
\widetilde{M}_s(\mathbf{v}^\vee\cdot\mathbf{u}^\vee)
= \widetilde{M}_s(\mathbf{v}^\vee)
= \widetilde{M_s^\vee}(\mathbf{v}).
\]
Thus we have a corresponding left-absorption phenomenon:
multiplying a sufficiently long right factor by an arbitrary left factor
does not affect the value of a suffix-based mould.

\subsection{Strict extension of the classical frameworks}

The oscillating net $(w_\varepsilon)$ defined in~\eqref{eq:osc_net} provides
more than an example where classical limits fail.  It witnesses that the
generalized monoid $\widetilde{\Si}^*$ is \emph{strictly larger} than the
classical universe $\Si^\infty = \Si^* \cup \Si^\omega$: its equivalence
class contains no finite or infinite word.  To make this precise we first
embed $\Si^\infty$ into the quotient.

\begin{definition}[Truncation map from $\Si^\infty$ to $\widetilde{\Si}^*$]\label{def:iota_infty}
	\begin{enumerate}[label=(\roman*)]
		\item For a finite word $u\in\Si^*$, let $\iota_{\mathrm{f}}(u)=[(u)_\varepsilon]$
		be the class of the constant net (as in Definition~\ref{def:iota}).
		\item For an infinite word $u = u_1u_2\cdots \in \Si^\omega$, define the
		truncation net $T_\varepsilon(u) = u[1\ldots\lfloor 1/\varepsilon\rfloor]$.
		Since $\lfloor 1/\varepsilon\rfloor\to\infty$, the net
		$(T_\varepsilon(u))_\varepsilon$ is moderate
		($|T_\varepsilon(u)|\le\varepsilon^{-1}$).  Set
		\[
		\iota_{\omega}(u) = [(T_\varepsilon(u))] \in \widetilde{\Si}^* .
		\]
	\end{enumerate}
	The resulting map $\iota^\infty:\Si^\infty \to \widetilde{\Si}^*$ is
defined by $\iota^\infty(u)=\iota_{\mathrm{f}}(u)$ for $u\in\Si^*$ and
$\iota^\infty(u)=\iota_{\omega}(u)$ for $u\in\Si^\omega$. It is a
set-theoretic injection depending on the chosen truncation length
$\lfloor 1/\varepsilon\rfloor$; it is not a canonical algebraic
embedding and, in general, does not preserve the partial concatenation
structure of $\Si^\infty$.
\end{definition}

\begin{remark}[Non-canonical embedding and failure to preserve classical concatenation]
	\label{rem:truncation_dep}
	The embedding
	\[
	\iota_\omega(u)=[(T_\varepsilon(u))],
	\qquad
	T_\varepsilon(u)
	=
	u[1\ldots N_\varepsilon],
	\qquad
	N_\varepsilon=
	\left\lfloor\frac1\varepsilon\right\rfloor,
	\]
	depends on the specific truncation scheme. In general, it does not
	intertwine classical concatenation with the induced monoid operation.
	Moreover, the resulting class is not canonical: replacing
	\(\lfloor 1/\varepsilon\rfloor\) by \(\lfloor\varepsilon^{-\alpha}\rfloor\)
	with \(\alpha\neq 1\) may change the class of the same infinite word
	\(u\), because the suffix behaviour on dyadic scales differs.
	
	For example, let \(\Si=\{a,b\}\) and \(u=(ab)^\omega\). With
	\(N_\varepsilon=\lfloor 1/\varepsilon\rfloor\), the truncation
	\(T_\varepsilon(u)\) ends in \(a\) when \(N_\varepsilon\) is odd and in
	\(b\) when \(N_\varepsilon\) is even. Since \(N_\varepsilon\) alternates
	parity on arbitrarily small scales, the suffix behaviour of
	\((T_\varepsilon(u))\) is tied to this specific scale. Replacing
	\(\lfloor 1/\varepsilon\rfloor\) by \(\lfloor\varepsilon^{-2}\rfloor\)
	changes that parity pattern and may change the class
	\(\iota_\omega(u)\). Hence the map is not canonical.
\end{remark}

\begin{lemma}[Injectivity of $\iota^\infty$]\label{lem:iota_infty_injective}
	The map $\iota^\infty:\Si^\infty \to \widetilde{\Si}^*$ is injective.
\end{lemma}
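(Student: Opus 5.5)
We prove injectivity by splitting $\Si^\infty=\Si^*\cup\Si^\omega$ and treating the three kinds of pairs separately. Throughout we use only the prefix half of Definition~\ref{def:equiv}: if two nets are equivalent, then taking $m=1$ gives $\lcp\ge\log_2(1/\varepsilon)$ for all sufficiently small $\varepsilon$.

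\emph{Case 1: two finite words.} If $u,v\in\Si^*$ and $\iota^\infty(u)=\iota^\infty(v)$, then $\iota_{\mathrm f}(u)=\iota_{\mathrm f}(v)$. Proposition~\ref{prop:iota_injective} then gives $u=v$.

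\emph{Case 2: a finite word and an infinite word.} Let $u\in\Si^*$ and $v\in\Si^\omega$, and suppose $\iota_{\mathrm f}(u)=\iota_\omega(v)$. By Lemma~\ref{lem:finite_classes}, the net $(T_\varepsilon(v))$ is then eventually equal to $u$. In particular $|T_\varepsilon(v)|=|u|$ for all small $\varepsilon$. But $|T_\varepsilon(v)|=\lfloor1/\varepsilon\rfloor\to\infty$, a contradiction. So this case cannot occur.

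\emph{Case 3: two infinite words.} Let $u,v\in\Si^\omega$ with $u\neq v$. Let $k$ be the first index with $u_k\neq v_k$. Once $\lfloor1/\varepsilon\rfloor\ge k$, both truncations $T_\varepsilon(u)$ and $T_\varepsilon(v)$ contain position $k$. Hence $\lcp(T_\varepsilon(u),T_\varepsilon(v))=k-1$, a constant.
\begin{itemize}
\item If the two nets were equivalent, the $m=1$ condition would require $k-1\ge\log_2(1/\varepsilon)$ for all small $\varepsilon$.
\item This fails as soon as $\varepsilon<2^{-(k-1)}$.
\end{itemize}
Hence $\iota_\omega(u)\neq\iota_\omega(v)$.

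Combining the three cases shows that $\iota^\infty$ is injective. The only care needed is in Case 3: the difference at position $k$ must actually lie inside both truncations. This holds for all small $\varepsilon$ because the truncation length tends to infinity. The suffix behaviour, which depends on the truncation scale as noted in Remark~\ref{rem:truncation_dep}, plays no role here, since the prefix condition alone already separates the classes.
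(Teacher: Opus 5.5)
Your proof is correct and follows the paper's argument. Like the paper, you split into three cases: Proposition~\ref{prop:iota_injective} for two finite words, and the constant common prefix $k-1$ of the truncations for two distinct infinite words. In the mixed case, you invoke Lemma~\ref{lem:finite_classes} together with a length count, where the paper appeals directly to a bounded common prefix; since that lemma is itself proved by the bounded-prefix argument, the two routes are essentially the same.
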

\begin{proof}
	Injectivity on $\Si^*$ is already known
	(Proposition~\ref{prop:iota_injective}).  For two distinct infinite words
	$u,v\in\Si^\omega$, let $k$ be the first position where they differ.
	Because $\lfloor 1/\varepsilon\rfloor\to\infty$, for all sufficiently small
	$\varepsilon$ we have $\lfloor 1/\varepsilon\rfloor\ge k$, and therefore
	$T_\varepsilon(u)$ and $T_\varepsilon(v)$ share the first $k-1$ symbols but
	differ at position $k$.  Hence $\lcp(T_\varepsilon(u),T_\varepsilon(v))=k-1$
	eventually.  A common prefix of bounded length cannot satisfy the equivalence
	condition $\lcp(\cdot,\cdot)\ge m\log_2(1/\varepsilon)$ for every $m$, so
	$(T_\varepsilon(u))\not\sim(T_\varepsilon(v))$.  Thus
	$\iota_\omega(u)\neq\iota_\omega(v)$.  Finally, a finite word and an infinite
	word cannot have equivalent nets: the former has bounded length, while the
	latter has length $\lfloor 1/\varepsilon\rfloor\to\infty$, and the same
	argument shows that equivalence would force an impossibly long common prefix.
	Hence $\iota^\infty$ is injective.
\end{proof}

\begin{theorem}[Proper extension]\label{thm:strict_inclusion}
	Let $\Si = \{\texttt{a},\texttt{b}\}$ and let $[W] = [(w_\varepsilon)]$ be the
	class of the oscillating net~\eqref{eq:osc_net}.  Then, with respect to the
	truncation map $\iota^\infty$ of Definition~\ref{def:iota_infty},
	\begin{enumerate}[label=(\roman*)]
		\item $[W] \notin \iota^\infty(\Si^*)$; \emph{i.e.}\ $[W]$ is not represented
		by a finite word.
		\item $[W] \notin \iota^\infty(\Si^\omega)$; \emph{i.e.}\ no infinite word
		represents $[W]$ under this truncation.
	\end{enumerate}
\end{theorem}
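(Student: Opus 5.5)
Both parts reduce to the first-letter oscillation established in Proposition~\ref{prop:classical_fail}. Recall that $\sim$ is an equivalence relation. So $[W]=\iota^\infty(u)$ means that the fixed net $(w_\varepsilon)$ is equivalent to the specific representative $(u)_\varepsilon$ or $(T_\varepsilon(u))_\varepsilon$. In either case, Definition~\ref{def:equiv} with $m=1$ yields a threshold $\varepsilon_1>0$ such that
\[
\lcp(w_\varepsilon,x_\varepsilon)\ge\log_2(1/\varepsilon)
\qquad(\varepsilon<\varepsilon_1),
\]
where $x_\varepsilon$ is the representative in question. For $\varepsilon<\min(\varepsilon_1,1/2)$ the right-hand side exceeds $1$, so $w_\varepsilon$ and $x_\varepsilon$ have the same first letter. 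It therefore suffices to produce arbitrarily small parameters at which these first letters differ.

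\emph{Part (i).} Let $x_\varepsilon=u\in\Si^*$ be constant. If $u=\la$, then Lemma~\ref{lem:empty_class} forces $w_\varepsilon=\la$ eventually, which is false because $|w_\varepsilon|=2\lfloor\varepsilon^{-1}\rfloor\ge2$. If $u\neq\la$, let $c$ be its first letter. Use the sequences $\varepsilon_j=2^{-2j}$ and $\varepsilon'_j=2^{-(2j+1)}$ from Proposition~\ref{prop:classical_fail}: $w_{\varepsilon_j}$ begins with $\texttt a$ and $w_{\varepsilon'_j}$ begins with $\texttt b$. Whichever of these letters differs from $c$ gives arbitrarily small $\varepsilon$ with $\lcp(w_\varepsilon,u)=0$, a contradiction. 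Alternatively, one could invoke Lemma~\ref{lem:finite_classes} directly: it would force $w_\varepsilon=u$ eventually, which contradicts $|w_\varepsilon|\to\infty$.

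\emph{Part (ii).} Let $x_\varepsilon=T_\varepsilon(u)=u[1\ldots\lfloor1/\varepsilon\rfloor]$ for some $u\in\Si^\omega$. Since $\lfloor 1/\varepsilon\rfloor\ge1$ for $\varepsilon\le1$, the first letter of $T_\varepsilon(u)$ is $u_1$ for every $\varepsilon$, independently of $\varepsilon$. Apply the same two sequences: if $u_1=\texttt a$, then along $\varepsilon'_j$ we get $\lcp=0$; if $u_1=\texttt b$, then along $\varepsilon_j$ we get $\lcp=0$. Both sequences tend to $0$, which contradicts the eventual bound $\lcp>1$.

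There is no real obstacle. The only point needing care is the order of quantifiers: equivalence must hold \emph{for all} sufficiently small $\varepsilon$, so a single cofinal family of bad parameters suffices to break it. We also need the truncation length to be at least $1$, so that $T_\varepsilon(u)$ really does start with $u_1$.
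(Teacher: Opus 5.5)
Your proposal is correct and essentially matches the paper's proof. Part (ii) is the same first-letter contradiction along $\varepsilon_j=2^{-2j}$ and $\varepsilon'_j=2^{-(2j+1)}$; you use $m=1$ where the paper uses $m=2$, which makes no difference. For part (i) the paper instead uses the length bound $\lcp(u,w_\varepsilon)\le|u|$ against $m\log_2(1/\varepsilon)\to\infty$, which is the alternative you note via Lemma~\ref{lem:finite_classes}; your first-letter argument, with $u=\la$ handled separately, works equally well.
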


\begin{proof}
	\textbf{(i) Finite words.}
	Suppose $[W] = \iota^\infty(u)$ for some $u\in\Si^*$.  Then the constant net
	$(u)_\varepsilon$ is equivalent to $(w_\varepsilon)$.  By
	Definition~\ref{def:equiv}, for every $m$ we would have
	$\lcp(u,w_\varepsilon) \ge m\log_2(1/\varepsilon)$ for all sufficiently
	small $\varepsilon$.  But $\lcp(u,w_\varepsilon) \le |u|$ for every
	$\varepsilon$, while $m\log_2(1/\varepsilon) \to \infty$ as
	$\varepsilon\to0^+$.  This is impossible; hence
	$[W] \notin \iota^\infty(\Si^*)$.
	
	\textbf{(ii) Infinite words.}
	Assume, for contradiction, that there exists an infinite word
	$u = u_1u_2\cdots \in \Si^\omega$ such that $\iota_\omega(u) = [W]$, i.e.\
	$[(T_\varepsilon(u))] = [(w_\varepsilon)]$.
	
	Because $(T_\varepsilon(u)) \sim (w_\varepsilon)$,
	Definition~\ref{def:equiv} (with $m=2$) yields $\varepsilon_0>0$ such that
	\[
	\lcp(T_\varepsilon(u), w_\varepsilon) \ge 2\log_2(1/\varepsilon)
	\qquad \text{for all } 0<\varepsilon < \varepsilon_0 .
	\]
	In particular, the first letter of $T_\varepsilon(u)$ coincides with the
	first letter of $w_\varepsilon$ whenever $2\log_2(1/\varepsilon) \ge 1$, i.e.\
	for $\varepsilon \le 2^{-1/2}$.
	
	By the definition of truncation, $(T_\varepsilon(u))_1 = u_1$ for every
	$\varepsilon < 1$.  Hence for all sufficiently small $\varepsilon$, the first
	letter of $w_\varepsilon$ must equal $u_1$.
	
	Now consider the sequences $\varepsilon_j = 2^{-2j}$ and
	$\varepsilon'_j = 2^{-(2j+1)}$ ($j \ge 1$).  For sufficiently large $j$, both
	are smaller than $\min(\varepsilon_0, 2^{-1/2})$. By the definition of the
	oscillating net $w_\varepsilon$ in~\eqref{eq:osc_net},
	\[
	w_{\varepsilon_j} \text{ begins with } \texttt{a} \quad (\text{even }
	\lfloor\log_2(1/\varepsilon)\rfloor),\qquad
	w_{\varepsilon'_j} \text{ begins with } \texttt{b} \quad (\text{odd }
	\lfloor\log_2(1/\varepsilon)\rfloor).
	\]
	Thus we obtain $u_1 = \texttt{a}$ (from the first sequence) and
	$u_1 = \texttt{b}$ (from the second), an impossibility.  This contradiction
	shows that no infinite word $u$ represents $[W]$.  Hence
	$[W] \notin \iota_\omega(\Si^\omega)$.
\end{proof}

\begin{corollary}[Strict inclusion]\label{cor:strict_inclusion}
	For the fixed truncation map $\iota^\infty$ of
	Definition~\ref{def:iota_infty},
	$\iota^\infty(\Si^\infty) \subsetneq \widetilde{\Si}^*$.  In particular,
	this truncation injection $\Si^\infty \hookrightarrow \widetilde{\Si}^*$
	is not surjective.
\end{corollary}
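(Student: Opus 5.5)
The corollary follows almost directly from Theorem~\ref{thm:strict_inclusion}. The plan is to exhibit one element of \(\widetilde{\Si}^*\) that lies outside the image of \(\iota^\infty\). The natural witness is the class \([W]=[(w_\varepsilon)]\) of the oscillating net~\eqref{eq:osc_net}, taken over the alphabet \(\Si=\{\texttt a,\texttt b\}\).

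\textbf{Step 1: \([W]\) is an element of the quotient.} Since \(|w_\varepsilon|=2\lfloor\varepsilon^{-1}\rfloor\le 2\varepsilon^{-1}\), the net \((w_\varepsilon)\) is moderate in the sense of Definition~\ref{def:moderate}. Hence \([W]\in\widetilde{\Si}^*\).

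\textbf{Step 2: \([W]\) is not in the image.} By definition,
\[
\iota^\infty(\Si^\infty)=\iota^\infty(\Si^*)\cup\iota^\infty(\Si^\omega).
\]
Part (i) of Theorem~\ref{thm:strict_inclusion} excludes \([W]\) from the first set, and part (ii) excludes it from the second. So \([W]\notin\iota^\infty(\Si^\infty)\).

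\textbf{Step 3: conclusion.} The inclusion \(\iota^\infty(\Si^\infty)\subseteq\widetilde{\Si}^*\) is trivial, so Steps 1 and 2 make it strict. Therefore \(\iota^\infty\) is not surjective. Together with Lemma~\ref{lem:iota_infty_injective}, it is an injection that is not a bijection.

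\textbf{General alphabets.} If the statement is read for an arbitrary alphabet with \(|\Si|\ge2\), I would pick two distinct letters to play the roles of \(\texttt a\) and \(\texttt b\) and repeat the same construction. The proof of Theorem~\ref{thm:strict_inclusion} uses only the first letter of \(w_\varepsilon\), so it transfers verbatim.

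\textbf{Main obstacle.} There is essentially none, since all the content sits in Theorem~\ref{thm:strict_inclusion}. The one point to state explicitly is that the corollary is relative to the fixed truncation scale \(\lfloor 1/\varepsilon\rfloor\) of Definition~\ref{def:iota_infty}.
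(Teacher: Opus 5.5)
Your proposal is correct and follows the paper's route: the corollary is recorded there as an immediate consequence of Theorem~\ref{thm:strict_inclusion}, with the oscillating class $[W]$ (moderate since $|w_\varepsilon|=2\lfloor\varepsilon^{-1}\rfloor\le 2\varepsilon^{-1}$) excluded from $\iota^\infty(\Si^*)$ by part (i) and from $\iota^\infty(\Si^\omega)$ by part (ii). Your explicit check of moderation and your decomposition of the image as $\iota^\infty(\Si^*)\cup\iota^\infty(\Si^\omega)$ are exactly the steps the paper uses implicitly.
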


The previous results show that $\widetilde{\Si}^*$ contains equivalence
classes, such as $[W]$, that are not represented by any finite or infinite
classical word.  Such classes are genuine \emph{asymptotic ghosts}: they arise
from moderately growing nets whose oscillation prevents convergence in the
Cantor metric, yet they become well-defined objects in the quotient by
$\sim$.  The same class $[W]$ admits a well-defined evaluation of
prefix-dependent moulds (Theorem~\ref{thm:well_def_open}) and, after
logarithmic Ces\`aro renormalization, yields the finite value computed in
Theorem~\ref{thm:renorm_value}.

Thus, for the fixed truncation map \(\iota^\infty\) of
Definition~\ref{def:iota_infty}, \(\widetilde{\Si}^*\) provides an
algebraic framework in which a non-convergent net acquires a
well-defined equivalence class; concatenation of such a class with any
other generalized word is well defined and again gives an element of
the monoid; and prefix-dependent functionals descend to the quotient
and can be evaluated in a well-defined manner after a suitable
renormalization.

None of these features is available in the classical Cantor space
\(\Si^\omega\) or in the union \(\Si^\infty\) as usually construed.
By Corollary~\ref{cor:strict_inclusion}, the chosen truncation map
\(\Si^\infty\hookrightarrow\widetilde{\Si}^*\) is not surjective, so
the generalized monoid contains elements that do not arise from finite
or right-infinite words under that particular embedding.
This provides a monoid construction that addresses the obstruction
described by Perrin and Pin~\cite{PerrinPin2004}: it is a genuine
monoid that contains \(\Si^*\) and admits words of controlled infinite
length, while retaining the ability to evaluate important classes of
functionals.

\section{Further perspectives: scales beyond the logarithmic window}
\label{sec:scales}

The present section briefly outlines how the logarithmic window can be
replaced by other admissible asymptotic scales, leading to a hierarchy
of quotient monoids. A systematic study of these variants is left for
future work; here we only record the basic definitions and monotonicity
properties needed to situate the logarithmic construction.

\begin{definition}[Admissible scale]
	An \emph{admissible scale} is an unbounded nondecreasing function
	\[
	\varphi:(0,1]\longrightarrow\mathbb R_{>0}.
	\]
	Given such a scale, two moderate nets \((u_\varepsilon)\) and
	\((v_\varepsilon)\) are \(\varphi\)-equivalent, written
	\((u_\varepsilon)\sim_\varphi(v_\varepsilon)\), if for every fixed
	\(m\in\mathbb N\) there exists \(\varepsilon_0>0\) such that
	\[
	\min\{\operatorname{lcp}(u_\varepsilon,v_\varepsilon),
	\operatorname{lcs}(u_\varepsilon,v_\varepsilon)\}
	\ge
	m\,\varphi(\varepsilon)
	\qquad(\varepsilon<\varepsilon_0).
	\]
\end{definition}

The logarithmic equivalence of Definition~\ref{def:equiv} is the
special case \(\varphi(\varepsilon)=\log_2(1/\varepsilon)\).

\begin{proposition}[Monotonicity of scales]\label{prop:scale_monotonicity}
	Let \(\varphi,\psi\) be admissible scales.
	\begin{enumerate}[label=(\roman*)]
		\item If \(\varphi=O(\psi)\) and \(\psi=O(\varphi)\), then
		\(\sim_\varphi=\sim_\psi\).
		\item If \(\varphi=o(\psi)\), then \(\sim_\psi\) is finer than
		\(\sim_\varphi\); that is, every \(\psi\)-equivalence class is
		contained in a \(\varphi\)-equivalence class.
	\end{enumerate}
\end{proposition}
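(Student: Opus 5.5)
Both parts reduce to comparing the thresholds \(m\varphi(\varepsilon)\) and \(m'\psi(\varepsilon)\) for suitable integers \(m,m'\). Throughout, write \(\mu(\varepsilon)=\min\{\operatorname{lcp}(u_\varepsilon,v_\varepsilon),\operatorname{lcs}(u_\varepsilon,v_\varepsilon)\}\) for a fixed pair of moderate nets. Since the scales take values in \(\mathbb R_{>0}\), the hypotheses read as follows. \(\varphi=O(\psi)\) means there are \(K>0\) and \(\varepsilon_1>0\) with \(\varphi(\varepsilon)\le K\psi(\varepsilon)\) for \(\varepsilon<\varepsilon_1\). \(\varphi=o(\psi)\) means \(\varphi(\varepsilon)/\psi(\varepsilon)\to0\), so in particular \(\varphi(\varepsilon)\le\psi(\varepsilon)\) eventually.

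For (i), by symmetry of the hypotheses it suffices to show \(\sim_\psi\subseteq\sim_\varphi\). Assume \((u_\varepsilon)\sim_\psi(v_\varepsilon)\) and fix \(m\in\mathbb N\). Choose an integer \(m'\ge mK\). The \(\sim_\psi\) condition with \(m'\) gives \(\varepsilon_2\) such that \(\mu(\varepsilon)\ge m'\psi(\varepsilon)\) for \(\varepsilon<\varepsilon_2\). Then for \(\varepsilon<\min(\varepsilon_1,\varepsilon_2)\),
\[
\mu(\varepsilon)\ge m'\psi(\varepsilon)\ge mK\psi(\varepsilon)\ge m\varphi(\varepsilon),
\]
where the convention \(\mu=+\infty\) when \(u_\varepsilon=v_\varepsilon\) is harmless. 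This is exactly the \(\sim_\varphi\) condition for \(m\). Exchanging the roles of \(\varphi\) and \(\psi\) gives the reverse inclusion, so the two relations coincide.

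For (ii), the identical argument applies with \(K=1\), using that \(\varphi\le\psi\) eventually. This yields \(\sim_\psi\subseteq\sim_\varphi\) as sets of pairs. Thus every \(\psi\)-class is contained in the \(\varphi\)-class of any of its members, which is the meaning of ``finer''. Strictly, only \(\varphi=O(\psi)\) is used here; the \(o\) hypothesis is what one would need to exhibit strictness, but strictness is not part of the claim.

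There is no real obstacle. The only points needing care are that the required multiplier must be an integer (hence the choice \(m'=\lceil mK\rceil\)) and that all the eventual bounds must hold simultaneously, which is handled by taking the minimum of the finitely many thresholds \(\varepsilon_1,\varepsilon_2\). Both relations are equivalence relations by the same ultrametric argument as in the earlier lemma for \(\sim\), so speaking of classes is legitimate.
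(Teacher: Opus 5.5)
Your proof is correct and follows the same route as the paper: the paper's one-line argument is the eventual comparison of thresholds (a suitable integer multiple of \(\psi(\varepsilon)\) dominating \(m\varphi(\varepsilon)\)), and you make the integer multiplier \(\lceil mK\rceil\) and the common cutoff \(\min(\varepsilon_1,\varepsilon_2)\) explicit. Your side remark that part (ii) only needs \(\varphi=O(\psi)\) is also correct.
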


\begin{proof}
	Both assertions follow directly from the definition, because the
	eventual inequality \(m\psi(\varepsilon)\ge m'\varphi(\varepsilon)\)
	allows one to pass from the finer scale to the coarser one.
\end{proof}

\begin{example}[Natural and subsequential scales]\label{ex:natural_scales}
	If one indexes words by \(\varepsilon=1/n\) with \(n\in\mathbb N\),
	then
	\[
	\varphi(1/n)=\log_2 n.
	\]
	Restricting \(n\) to a sparse subsequence changes the effective scale
	in the index \(n\):
	\begin{enumerate}[label=(\alph*)]
		\item For \(n=2^k\), one has
		\(\log_2(2^k)=k\), so the logarithmic scale in \(\varepsilon\)
		becomes linear in \(k\).
		\item For \(n=p_k\), the \(k\)-th prime, one has
		\(p_k\sim k\log k\), hence
		\(\log_2 p_k\sim \log_2 k+\log_2\log k\).
		\item For \(n=F_k\), the \(k\)-th Fibonacci number, one has
		\(F_k\sim \phi^k\) with \(\phi=(1+\sqrt5)/2\), hence
		\(\log_2 F_k\sim k\log_2\phi\).
	\end{enumerate}
	Thus the same logarithmic scale in \(\varepsilon\) translates into
	different effective scales in the index \(k\) depending on the
	underlying subsequence.
\end{example}

\begin{example}[Polynomial and exponential scales]\label{ex:poly_exp_scales}
	For \(\alpha>0\), the polynomial scale
$
	\varphi_\alpha(\varepsilon)=\varepsilon^{-\alpha}
	$
	is admissible. Since
$
	\log_2(1/\varepsilon)=o(\varepsilon^{-\alpha}),
	$
	Proposition~\ref{prop:scale_monotonicity} shows that
	\(\sim_{\varphi_\alpha}\) is finer than the logarithmic equivalence.
	Consequently, the polynomial scale is finer than the logarithmic
scale: it may separate nets that are logarithmically equivalent, but
it cannot identify nets that are logarithmically inequivalent.
	
	Similarly, the exponential scale
$
	\varphi_{\exp}(\varepsilon)=2^{\varepsilon^{-1}}
	$
	is admissible and is finer than every polynomial scale.
\end{example}

\begin{remark}[Idempotents and cancellation change with the scale]
	The algebraic properties of the quotient depend strongly on the
	chosen scale. For instance, a unary net \(a^{\ell(\varepsilon)}\)
	represents an idempotent modulo \(\sim_\varphi\) if and only if
	\[
	\frac{\ell(\varepsilon)}{\varphi(\varepsilon)}
	\longrightarrow\infty
	\qquad(\varepsilon\to0^+).
	\]
		Under the logarithmic scale, \(a^{\lfloor\log_2(1/\varepsilon)\rfloor}\)
	is not idempotent. Under a polynomial scale \(\varepsilon^{-\alpha}\),
	it remains non-idempotent, since
	\[
	\frac{\lfloor\log_2(1/\varepsilon)\rfloor}{\varepsilon^{-\alpha}}
	\sim \varepsilon^\alpha \log_2(1/\varepsilon)\to 0.
	\]
	More generally, a unary net can become idempotent only if its length
	dominates the scale; passing to a finer scale therefore makes
	idempotency harder, not easier.
\end{remark}

The general study of the monoids
\(\widetilde{\Si}^*_\varphi:=\cE_M(\Si^*)/\sim_\varphi\) for different
admissible scales, their embeddings into one another, and the behavior
of arithmetic subsequences such as primes or Fibonacci numbers, is left
for future work. The present paper has focused on the logarithmic scale
because it is the natural one for the logarithmic-prefix moulds studied
in Sections~\ref{sec:alg_prop} and~\ref{sec:strict}.

We emphasize that the logarithmic scale was chosen because it is the
natural one for the logarithmic-prefix functionals studied in
Sections~\ref{sec:alg_prop} and~\ref{sec:strict}. The polynomial and
exponential scales are mentioned only to illustrate the flexibility of
the construction; they are not developed in this paper.

\section{Conclusion}
\label{sec:conclusion}

We have constructed a generalized monoid of words \(\widetilde{\Si}^*\)
that extends the free monoid \(\Si^*\) with elements of controlled infinite
length. The construction rests on three ingredients: the bidirectional
prefix--suffix metric \(d_{\mathrm{PS}}\), a polynomial growth condition on
nets of finite words, and an asymptotic equivalence relation identifying
nets whose common prefix and common suffix both grow faster than any
constant multiple of \(\log(1/\varepsilon)\). The resulting quotient is a
monoid that contains \(\Si^*\) faithfully, carries a natural preorder and a
reversal involution, and supports the well-defined evaluation of all
logarithmic-prefix functionals satisfying the moderation conditions
specified in Section~\ref{sec:alg_prop}.

As explained in Section~\ref{sec:scales}, the logarithmic scale used in
the main body is one instance of a much larger family of admissible
asymptotic scales. Replacing it by polynomial, exponential, or
subsequence-adapted scales produces a hierarchy of related quotient
monoids, whose algebraic properties vary with the chosen scale.

	The monoid gives a constructive proposal addressing the obstruction
highlighted by Perrin and Pin~\cite{PerrinPin2004}: to obtain an algebraic
structure that extends the free monoid to infinite words while preserving
everywhere-defined concatenation. It is not claimed to be a canonical
extension of the partial concatenation on \(\Si^\infty\); instead, it
provides a well-defined quotient in which certain truncations of infinite
words can be embedded set-theoretically. The oscillating net constructed in
Section~\ref{sec:strict} shows that \(\widetilde{\Si}^*\) strictly enlarges
the classical frameworks \(\Si^\omega\) and \(\Si^\infty\). Nets with no
limit in the Cantor space and no concatenation in \(\Si^\infty\)
nevertheless acquire well-defined classes, and prefix-dependent moulds can
be assigned renormalized finite values relative to the chosen logarithmic
Ces\`aro renormalization.

From the semigroup-theoretic point of view, several structural questions
remain open. A complete classification of Green's relations
\(\mathcal{L},\mathcal{R},\mathcal{J},\mathcal{H},\mathcal{D}\)
and of the principal ideal lattice would be desirable; the present paper
establishes only partial results for the identity, finite words, and the
unary submonoid. Natural submonoids arising from restricted classes of
representatives---bounded, logarithmic, polynomial, or eventually periodic
nets---also deserve systematic study. It remains unclear whether
\(\widetilde{\Si}^*\) admits a categorical universal property, and a
comparison with prefix-only and suffix-only quotients would clarify the
role of the bidirectional metric. Finally, the relationship with the full
mould calculus of \'Ecalle~\cite{Ecalle1981a,Ecalle1981b,Ecalle1985},
the possible development of a generalized automata theory, and the
hierarchy of quotients associated with general admissible scales
(Section~\ref{sec:scales}) are natural directions for future work.
 A complete solution of these problems, especially the classification of
 Green's relations and of the principal ideal lattice, is the subject of
 ongoing work. The present paper should be regarded as a first
 structural study of \(\widetilde{\Si}^*\), establishing the basic
 monoid-theoretic framework and leaving the finer semigroup-theoretic
 classification to future investigations.

\backmatter

\section*{Declarations}

\subsection*{Funding}
The authors declare that no funds, grants, or other support were received during the preparation of this manuscript.

\subsection*{Conflict of interest}
The authors have no relevant financial or non-financial interests to disclose.

\subsection*{Author contributions}
All authors contributed to the study conception, design, and manuscript preparation. All authors read and approved the final manuscript.

\subsection*{Data availability}
No datasets were generated or analysed during the current study.


\end{document}